\newcommand{\bpr}{\begin{trivlist} \item[]{\bf Proof. }}
\newcommand{\epr}{\hspace*{\fill} $\qed$\end{trivlist}}
\newcommand{\be}{\begin{eqnarray}}
\newcommand{\ee}{\end{eqnarray}}
\newcommand{\ba}{\begin{align}}
\newcommand{\ea}{\end{align}}
\newcommand{\bi}{\begin{itemize}}
\newcommand{\ei}{\end{itemize}}
\newcommand{\secref}[1]{Section~\ref{sec:#1}}
\newcommand{\seclab}[1]{\label{sec:#1}}
\newcommand{\eqlab}[1]{\label{eq:#1}}
\renewcommand{\eqref}[1]{(\ref{eq:#1})}
\newcommand{\figref}[1]{Fig.~\ref{fig:#1}}
\newcommand{\figlab}[1]{\label{fig:#1}}
\newcommand{\lemmaref}[1]{Lemma~\ref{lemma:#1}}
\newcommand{\lemmalab}[1]{\label{lemma:#1}}
\newcommand{\remref}[1]{Remark~\ref{remark:#1}}
\newcommand{\remlab}[1]{\label{remark:#1}}
\newcommand{\corref}[1]{Corollary~\ref{cor:#1}}
\newcommand{\corlab}[1]{\label{cor:#1}}
\newcommand{\thmref}[1]{Theorem~\ref{theorem:#1}}
\newcommand{\thmlab}[1]{\label{theorem:#1}}
\newcommand{\appref}[1]{Appendix~\ref{app:#1}}
\newcommand{\applab}[1]{\label{app:#1}}
 \newcommand\qs[1]{{\textcolor{black}{#1}}}
 \newcommand\qss[1]{{\textcolor{black}{#1}}}
\newtheorem{theorem}{Theorem}[section]
\newtheorem{lemma}[theorem]{Lemma}
\newtheorem{cor}[theorem]{Corollary}
\newtheorem{remark}[theorem]{Remark}
\numberwithin{equation}{section}
\begin{document}

\title{The regularized visible fold revisited }

\author {K. Uldall Kristiansen} 
\date\today
\maketitle

\vspace* {-2em}
\begin{center}
\begin{tabular}{c}
Department of Applied Mathematics and Computer Science, \\
Technical University of Denmark, \\
2800 Kgs. Lyngby, \\
DK
\end{tabular}
\end{center}

 \begin{abstract}
 The planar visible fold is a simple singularity in piecewise smooth systems. In this paper, we consider singularly perturbed systems that limit to this piecewise smooth bifurcation as the singular perturbation parameter $\epsilon\rightarrow 0$. Alternatively, these singularly perturbed systems can be thought of as regularizations of their piecewise counterparts. The main contribution of the paper is to demonstrate the use of consecutive blowup transformations in this setting, allowing us to obtain detailed information about a transition map near the fold under very general assumptions. We apply this information to prove, \qs{for the first time}, the existence of a locally unique saddle-node bifurcation in the case where a limit cycle, in the singular limit $\epsilon\rightarrow 0$, grazes the discontinuity set. We apply this result to a mass-spring system on a moving belt described by a Stribeck-type friction law. 
 \end{abstract}
\section{Introduction}

Piecewise smooth (PWS) differential equations appear in many applications, including problems in mechanics (impact, friction, backlash, free-play, gears, rocking blocks), see also \secref{model} below, electronics (switches and diodes, DC/DC converters, $\Sigma-\Delta$ modulators), control engineering (sliding mode control, digital control, optimal control), oceanography (global circulation models), economics (duopolies) and biology (genetic regulatory networks): see \cite{Bernardo08, MakarenkovLamb12} for further references. \qs{Much of the mathematical study of PWS systems began with the work of Filippov \cite{filippov1988differential} and Utkin \cite{utkin1992a}, the latter with a strong focus on applications in control theory}. However, PWS models do pose mathematical difficulties because they do not in general define a (classical) dynamical system. In particular, forward {uniqueness} of solutions cannot always be guaranteed; a prominent example of this is the two-fold in $\mathbb R^3$, see \cite{desroches_canards_2011}. 

Frequently, PWS systems are idealisations of smooth systems with abrupt transitions. It is therefore perhaps natural to view a PWS system as a singular limit of a smooth regularized system. This viewpoint has been adopted by many authors, see e.g. \cite{Sotomayor96,Buzzi06,guglielmi2015a,Llibre07,llibre2009a,kaklamanos2019a,bonet-rev2016a,kristiansen2017a,kristiansen2018a,krihog2,krihog}, and is useful for resolving the ambiguities associated with PWS systems. 
In \cite{kristiansen2018a}, for example, the authors showed that the regularization of the visible-invisible two-fold in $\mathbb R^3$, a PWS singularity producing a loss of uniqueness, possesses a forward orbit $U$ that is distinguished amongst all the possible forward orbits as $\epsilon\rightarrow 0$, see \cite[Theorem 1]{kristiansen2018a} for details. Although this result was only given for one particular regularization function ($\arctan$), the authors acknowledged that the results could be extended to other functions (including ones like those in (A1) and (A2) below) without essential changes to either their result or their approach. To obtain this result, the authors applied an adaptation of the blowup method, pioneered by Dumortier and Roussarie \cite{dumortier_1996} to deal with fully nonhyperbolic singularities.

\qss{
On the other hand, the aim of \cite{Sotomayor96}, as one of the first papers on regularization,  was to develop a systematic study of PWS singularities and initiate the Peixoto's program about structural stability of PWS systems. In contrast, the present paper is less about regularization and more about continuing a relative new program for the analysis of smooth system that approach nonsmooth ones, as they appear in applications, using blowup techniques in a framework known from from Geometric Singular Perturbation Theory (GSPT) \cite{krupa_extending_2001,kuehn2015,Gucwa2009783}. See \cite{peterTalk,kosiuk2015a,kukszm,kristiansen2019d} for recent research in this direction. By following this approach, we consider -- in a very general framework -- smooth planar systems limiting as $\epsilon\rightarrow 0$  to PWS systems having visible fold singularity. Using blowup, we obtain a detailed and uniform description of a transition map near this fold. This allows us to prove, for the first time, the existence of a locally unique saddle-node bifurcation in the case where a family of limit cycles grazes the discontinuity set in a PWS visible fold.}

\subsection{Setting}\seclab{setting}
We consider planar systems of the following form
\begin{align}
\dot z &=Z(z,\phi(y\epsilon^{-1},\epsilon),\alpha),\eqlab{ztf}
\end{align}
where $z=(x,y)\in \mathbb R^2$ and $\phi:\mathbb R\times [0,\epsilon_0]\rightarrow \mathbb R$. Moreover, $\epsilon\in (0,\epsilon_0]$ and $\alpha \in I\subset \mathbb R$ are parameters and $Z:\mathbb R^2\times \mathbb R\times I\rightarrow \mathbb R^2$ is smooth in all arguments. 

Specifically, we will assume that:
\begin{itemize}
 \item[(A0)] $p\mapsto Z(z,p,\alpha)$ is affine:
%
%
\begin{align}
 Z(z,p,\alpha)=p Z_+(z,\alpha)+(1-p)Z_-(z,\alpha),\eqlab{fA}
\end{align}
with $Z_\pm:\mathbb R^2\times I\rightarrow \mathbb R^2$ each smooth. 
\end{itemize}
Regarding the functions $\phi$ we suppose the following:
\begin{itemize}
 \item[(A1)] $\phi:\mathbb R\times [0,\epsilon_0]\rightarrow \mathbb R$ is a smooth ``regularization function'' satisfying:
\begin{align}
 \phi'_{s}(s,\epsilon):= \frac{\partial \phi(s,\epsilon)}{\partial s}>0,\eqlab{monone}
\end{align}
for all $s\in \mathbb R,\,\epsilon\in [0,\epsilon_0]$ and 
\begin{align}
 \phi(s,\epsilon) \rightarrow \left\{\begin{array}{cc} 1 & \text{for $s\rightarrow \infty$}\\
                                 0 &  \text{for $s\rightarrow -\infty$}
                                \end{array}\right.,\eqlab{phiLimit}
\end{align}
for each $\epsilon\in [0,\epsilon_0]$.
\end{itemize}
\qs{Assumptions (A0) and (A1), specifically \eqref{phiLimit}, imply that the $\epsilon\rightarrow 0$ limit of \eqref{ztf} is well-defined pointwise for $y\ne 0$, the limit being the PWS system:
\begin{align}
 \dot z &= \left\{\begin{matrix}
                                           Z_+(z,\alpha)&\text{for}\,y>0,\\
                                           Z_-(z,\alpha)&\text{for}\,y<0.
                                          \end{matrix}\right.\eqlab{ztfpws}
\end{align}
In this sense, \eqref{ztf} is ``singularly perturbed'', but obviously in a different way to slow-fast systems where
\begin{align*}
 \epsilon \dot x&=f(x,y,\epsilon),\\
 \dot y&=g(x,y,\epsilon),
\end{align*}
for $0<\epsilon\ll 1$.
Such systems have successfully been studied during the past decades by GSPT. This theory provides a general framework or toolbox for dealing with singular perturbations (in the dissipative setting) using invariant manifolds. It consists of various theories and methods, most notably (a) Fenichel's theory \cite{fen1,fen2,fen3}, see also \cite{jones_1995,kuehn2015}, for the perturbation of compact and normally hyperbolic critical submanifolds of $C=\{(x,y)\vert \,f(x,y,0)=0\}$ for $\epsilon=0$, (b) the blowup method  \cite{dumortier_1996,krupa_extending_2001}, for dealing with loss of hyperbolicity of $C$, and finally (c) the Exchange Lemma \cite{schecter2008a}. } 

\qs{Following (A0), see \eqref{ztfpws}, systems of the form \eqref{ztf} can be viewed as regularizations of PWS systems. In this context, the regularization functions $\phi$ are always assumed to be independent of $\epsilon$, see \cite{Buzzi06,guglielmi2015a,Llibre07,llibre2009a,kaklamanos2019a,bonet-rev2016a,kristiansen2017a,kristiansen2018a,krihog2,krihog}.  In this paper, we include this dependency in (A1) for full generality. This makes our results directly applicable to switch-like functions that naturally appear in applications. For example, the Goldbeter-Koshland function \cite{goldbeter1981a}:
\begin{align}
\phi(s,\epsilon) = {\frac {2+\epsilon\,\sqrt {4+s^2+2\,\epsilon\,s^2+4\,\epsilon\,{s}+ {\epsilon}^{2}{{s
}}^{2}}+2\,\epsilon+\epsilon\,{s}+{\epsilon}^{2}{s}}{ 
 \left(2-{s}+ \epsilon\,{s}+\sqrt {4+s^2+2\,
\epsilon\,s^2+4\,\epsilon\,{s}+{\epsilon}^{2}s^2} \right) \left(1+ \epsilon\right) }}.\eqlab{gkfunc}
\end{align}
appears as a steady-state solution for a two-state biological system. It therefore occurs naturally in QSS approximations and for $\epsilon$ small -- which, in the biological context, is given in terms of rate constants, -- it is switch-like. In fact, I have here (without loss of generality) normalised the function appropriately such that \eqref{gkfunc} satisfies \eqref{phiLimit}.
Notice that 
                        \begin{align}
                         \phi(s,0) = \frac{2}{2-s + \sqrt{s^2+4}},\eqlab{gkfunc0}
                        \end{align}
                        and with some extra work
                        \begin{align}
                         \phi(s,\epsilon) = \begin{cases}
                                             1-\frac{s^{-1}}{1+\epsilon} +\mathcal O(s^{-2}),\quad \text{for}\,\,s\rightarrow \infty,\\
                                             \frac{1+2\epsilon}{(1+\epsilon)^{2}} (-s)^{-1} +\mathcal O((-s)^{-2}),\quad \text{for}\,\,s\rightarrow -\infty.
                                            \end{cases}\eqlab{gklimit}
                        \end{align}}
                        
                        \qss{Another reason for considering systems of the form \eqref{ztf}, satisfying the general condition (A1), is that such systems also appear upon certain normalization of systems that are unbounded as $\epsilon \rightarrow 0$. I believe Peter Szmolyan \cite{peterTalk} was the first to promote this connection. For example, singular exponential nonlinearities like $e^{y\epsilon^{-1}}$ with $0<\epsilon\ll 1$ appear in many different areas (see e.g. the Ebers-Moll model of an NPN transistor \cite{ebers1954a} and the Arrhenius law in chemical kinetics \cite{laidler1987a}) of mathematical modelling. Such terms, being unbounded for $y>0$ and $\epsilon\rightarrow 0$, can be ``tamed'' by a normalization through division of the right hand side by a quantity $1+e^{y\epsilon^{-1}}$. This corresponds to a nonlinear transformation of time and produces -- under general assumptions -- systems of the form \eqref{ztf}, satisfying \eqref{fA}, with $\phi$ of the following form:
\begin{align}
\phi(s,\epsilon) = \frac{e^s}{1+e^s}.\eqlab{phicor}
\end{align}
For further details, see the recent preprint \cite{kristiansen2019d}.
}
                        

 \qs{On the other hand, the references \cite{Buzzi06,Llibre07,llibre2009a,bonet-rev2016a,panazzolo2017a}, following \cite{Sotomayor96}, also study a special class, called Sotomayor-Teixera regularization functions, consisting of non-analytic functions $\psi(s)$
 of the following form:
\begin{align*}
 \psi'(s)>0,\quad \text{for all}\quad s\in (-1,1),
\end{align*}
and 
\begin{align}
 \psi(s)&=0,\quad \text{for all}\quad s\le -1,\eqlab{psi}\\
 \psi(s)&=1,\quad \text{for all}\quad s\ge 1.\nonumber
\end{align}
Notice that these functions are not asymptotic to $0$ and $1$ but rather reach these values at finite values of $s$. Simple analytic functions like $\frac12 +\frac{1}{\pi}\arctan(s)$ or functions like \eqref{gkfunc}, see also \eqref{gkfunc0}, that appear in applications and satisfy (A1) do therefore not belong to this class. In examples, see e.g. \cite{bonet-rev2016a, krihog}, functions of the type \eqref{psi} are also always piecewise polynomial, the simplest example being (although this is clearly only $C^0$)
\begin{align*}
 \psi(s) = \begin{cases}
            1\,&\text{for $s\ge 1$},\\
            \frac12 s + \frac12\,&\text{for $s\in (-1,1)$},\\
            0\,&\text{for $s\le -1$}.
           \end{cases}
\end{align*}
Since the Sotomayor-Teixera regularization functions do not satisfy (A1) and do not appear naturally in applications, we shall not study these functions further in the present paper. 
}

\subsection{PWS systems}
Consider the PWS system \eqref{ztfpws}. 
Along the discontinuity set $\Sigma=\{(x,y)\vert y=0\}$, also called the switching manifold in the PWS literature \cite{Bernardo08}, $Z_\pm$ can either (a) be pointing in the same directions, (b) be pointing in opposite directions, or at least one of $Z_\pm$ is tangent. The subset $\Sigma_{cr}$ along which (a) occurs is called crossing, which is relatively ``harmless''. Here orbits of \eqref{ztf} follow the orbits of \eqref{ztfpws} obtained by gluing orbits together on either side. The subset $\Sigma_{sl}$ along which (b) occurs, on the other hand, is called sliding. Here solutions of \eqref{ztfpws} cannot be extended beyond the intersection with $\Sigma$. In the PWS literature, \eqref{ztfpws} is therefore frequently ``closed'' by subscribing a Filippov vector-field along $\Sigma$. See \figref{Filippov} for a geometric construction. Interestingly, under assumptions (A0) and (A1), see \cite{Buzzi06,Llibre07,llibre2009a,bonet-rev2016a,kristiansen2017a,kristiansen2018a}, the Filippov vector-field also coincides with a reduced vector-field on a critical manifold of \eqref{ztf} for $\epsilon=0$, obtained upon blowup of $\Sigma$.  

\begin{figure}
\begin{center}
\includegraphics[width=.7\textwidth]{./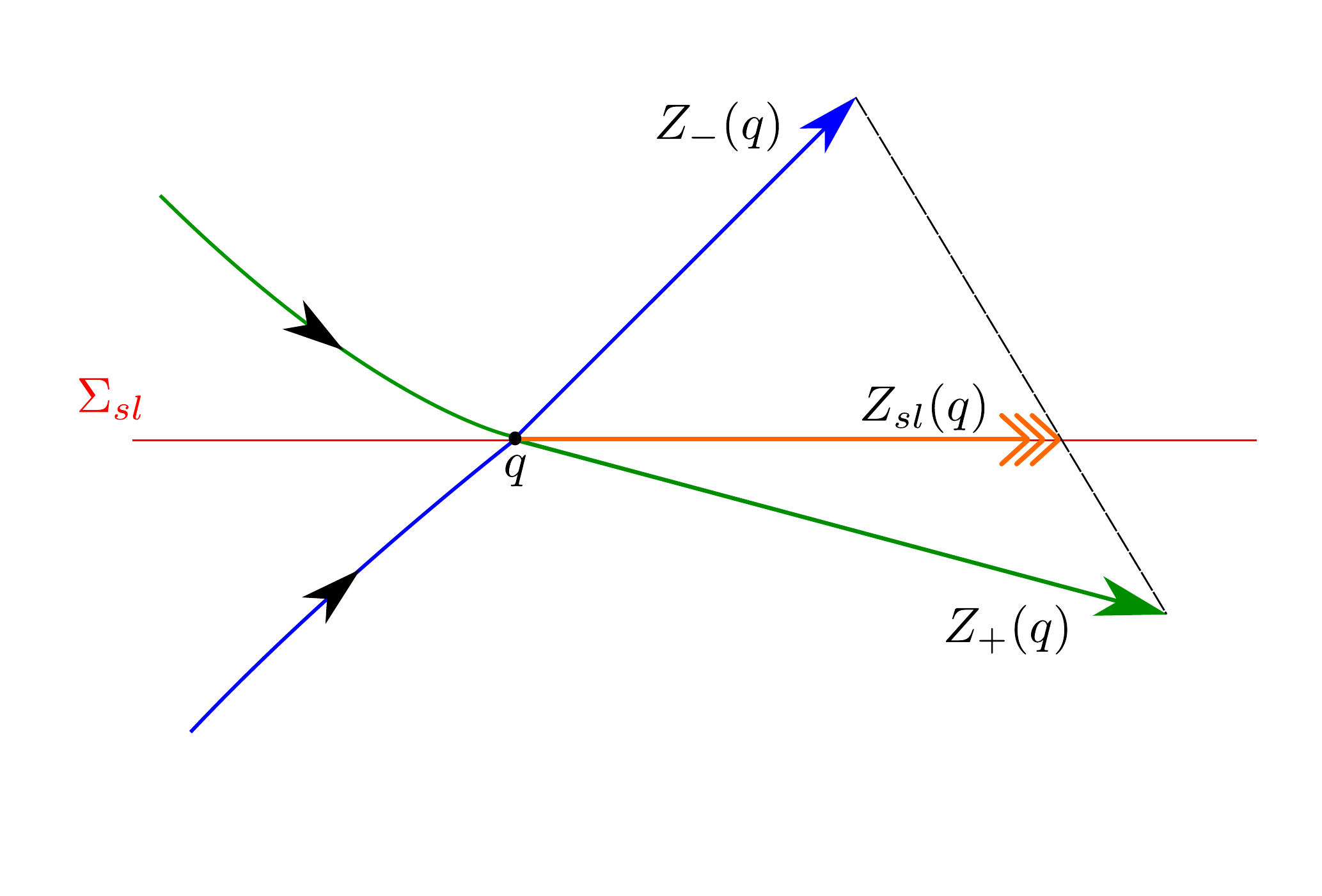}
 \caption{Geometric construction of the Filippov sliding vector-field $Z_{sl}$ as the convex combination of $Z_\pm$ such that $Z_{sl}$ is tangent to $\Sigma$.}
 \end{center}
 \figlab{Filippov}
              \end{figure}

It is possible to characterize crossing and sliding using the Lie derivative $Z_+ h(\cdot):=\nabla h(\cdot) \cdot Z_+(\cdot,\alpha)$ of $h(x,y)=y$, such that $\Sigma = \{h(x,y)=0\}$, along $Z_\pm$:
\begin{align*}
 \Sigma_{cr} &= \{q\in \Sigma\vert (Z_+h(q))(Z_-h(q))>0\},\\
 \Sigma_{sl} &= \{q\in \Sigma\vert (Z_+h(q))(Z_-h(q))<0\}.
\end{align*}
The tangencies
\begin{align*}
 T = \{q\in \Sigma\vert (Z_+h(q))(Z_-h(q)) = 0\},
\end{align*}
are in between.

\subsection{The visible fold tangency}
In \cite{bonet-rev2016a}, the authors also considered systems of the form \eqref{ztf} satisfying (A0). In particular, they considered the local behaviour near a visible fold tangency $T$, assuming that an orbit $\gamma$ of $Z_+$ had a quadratic tangency with $\Sigma$ at a point $q\in \Sigma$, while $Z_-(q)$ was transverse to $\Sigma$. See \figref{visible} for an illustration of the setting. Notice, the tangency is called visible because the orbit $\gamma$ is contained within $y\ge 0$. Using Lie derivatives, such a visible fold point can be written as
\begin{align*}
Z_+h(q) = 0,\,Z_+(Z_+ h)(q) > 0,Z_-h(q)>0.
\end{align*}
We consider the case illustrated in \figref{visible} where $\Sigma_{sl}$ and $\Sigma_{cr}$ occur at $x<0$ and $x>0$, respectively. 
Based on appropriate scalings, nonlinear transformations of time and the flow-box theorem, the authors of \cite{bonet-rev2016a} constructed a change of coordinates such that near $q$, the system could be brought into the form \eqref{fA} with
\begin{align}
Z_+(z) = \begin{pmatrix}
                 1+f(z)\\
                 2x + yg(z)
                \end{pmatrix},\quad 
                Z_-(z) =  \begin{pmatrix}
                           0\\
                           1
                          \end{pmatrix},\eqlab{ZpmNF}
\end{align}
where $f$ and $g$ are smooth and where $f(0)=0$ and $q=(0,0)$ in the new coordinates, suppressing any dependency on a parameter $\alpha$ in these expressions. The result is local, so we assume $z\in U_\xi:= [-\xi,\xi]^2$ with $\xi>0$ sufficiently small. See \cite[Proposition 14]{bonet-rev2016a}. Setting $f=g=0$ in \eqref{ZpmNF}, we realize that the orbit $\gamma$, which is tangent to $\Sigma$ at $(x,y)=(0,0)$, is close to the parabola $y=x^2$. In any case, it is locally a graph $y=\gamma(x)$, $x\in I\subset [-\xi,\xi]$, abusing notation slightly. It acts as a separatrix: Everything within $\{(x,y)\in U_\xi \vert x<0,0<y<\gamma(x)\}$ reaches $y=0$ and ``slides'', whereas everything above $y=\gamma(x)$ does not. See \figref{visible}. In fact, on $\Sigma_{sl}$, a simple calculation shows that the Filippov vector-field gives
\begin{align*}
 \dot x = \frac{1+f(x,0)}{1-2x},
\end{align*}
which is locally $\dot x\ge c>0$ for $c$ sufficiently small. This produces the local picture in \figref{visible}. 

The authors of \cite{bonet-rev2016a} analyse \eqref{ztf} with $Z_\pm$ as in \eqref{ZpmNF} using asymptotic methods, but considered, following \cite{Sotomayor96}, the special class of non-analytic regularization functions $\psi(s)$, recall \eqref{psi}.
The authors described the perturbation of a critical manifold and its extension by the forward flow into $y>0$ as $\epsilon\rightarrow 0$ for this class of functions. They also studied the case where a repelling limit cycle of $Z_+$ grazes $\Sigma$ and argued that this PWS bifurcation had to give rise to a saddle-node bifurcation of limit cycles for $\epsilon\ll 1$. But they did not proof this latter statement nor did they address the question of whether additional saddle-nodes could exist. 

\subsection{Main results}\seclab{mainResults}
In this paper, we will, following \cite[Proposition 14]{bonet-rev2016a} and the equations \eqref{ZpmNF}, revisit the results of \cite{bonet-rev2016a} within our slightly more general framework. 
\qs{First, we provide a detailed description (not available in \cite{bonet-rev2016a}) of a transition mapping near the visible fold, see \thmref{main1}. Next, we use this accurate description to provide a rigorous proof of existence of a unique saddle-node bifurcation in the situation, where the PWS system has a repelling limit cycles undergoing a ``grazing bifurcation'', see \thmref{main2}.}

In this paper, ``smooth'' will mean $C^l$ with $l$ sufficiently large. We will leave it to the reader to determine what ``sufficiently'' is for the various statements to come.


\subsection*{\qss{The blowup approach for \eqref{ztf}}}
\qs{Our approach to study systems of the form \eqref{ztf} follows \cite{kristiansen2018a} and is very general. 
 We consider the extended system
\begin{align}
 z' &=\epsilon Z(z,\phi(y\epsilon^{-1},\epsilon),\alpha),\eqlab{zext}\\
 \epsilon'&=0,\nonumber
\end{align}
with $z=(x,y)$,
obtained from \eqref{ztf} in terms of the ``fast time'': $()'=\epsilon \dot{(\,)}$. For this system, 
the set defined by $(x,y,0)$ is a plane of equilibria, with the subset given by $y=0$ being extra singular due to the lack of smoothness there. However, by the following final assumption (A2) on the regularization functions $\phi$, we gain smoothness by applying the blow-up transformation
\begin{align}
\eqlab{cyl_blowup_map1}
r\ge 0,\,(\bar y,\bar \epsilon)\in S^1 \mapsto \begin{cases}
y&=r\bar y,\\
\epsilon &=r\bar \epsilon,
\end{cases}
\end{align}
for all $x$, 
to the extended system \eqref{zext}. }
\begin{itemize}
\item[(A2)] There exists ``decay rates'' $k_\pm \in \mathbb N$, constants $c_j>0,j=0,\ldots,3$ and smooth functions $\phi_\pm:[0,c_0]\times [0,c_1]\rightarrow [c_2,c_3]$ such that 
\begin{align}
 \phi(\epsilon_1^{-1},r_1\epsilon_1) &= 1-\epsilon_1^{k_+}\phi_+(\epsilon_1,r_1),\eqlab{phiPA}\\
 \phi(-\epsilon_3^{-1},r_3\epsilon_3)&= \epsilon_3^{k_-}\phi_-(\epsilon_3,r_3),\eqlab{phiNA}
\end{align}
for all $(\epsilon_i,r_i)\in [0,c_0]\times [0,c_1]$ and $i=1,3$. 
\end{itemize}
Since $\epsilon\ge 0$, only $\bar \epsilon\ge 0$ is relevant. Geometrically, \eqref{cyl_blowup_map1} therefore blows up the line defined by $(x,0,0)$ to a (semi-)cylinder, see also \figref{visible1} below. 

The variables $(\epsilon_i,r_i)$, $i=1,3$ in \eqref{phiPA} and \eqref{phiNA} appear naturally from the blowup approach. Indeed, following e.g. \cite{krupa_extending_2001}, we describe the blowup transformation in directional charts, obtained by setting $\bar y=1$, $\bar \epsilon=1$ and $\bar y=-1$, respectively. This produces the following local forms
\begin{align}
 r_1\ge 0,\epsilon_1\ge 0\, &\mapsto \begin{cases} y&=r_1,\\
                                      \epsilon &=r_1\epsilon_1,
                                     \end{cases}\eqlab{r1epsilon1}\\
 r_2\ge 0,y_2\in \mathbb R\, &\mapsto  \begin{cases} y&=r_2y_2,\\
                                      \epsilon &=r_2,
                                     \end{cases}\eqlab{r2y2}\\
 r_3\ge 0,\epsilon_3\in \mathbb R\, &\mapsto  \begin{cases} y&=-r_3,\\
                                      \epsilon &=r_3\epsilon_3,
                                     \end{cases}\eqlab{r3epsilon3}
\end{align}
respectively, of \eqref{cyl_blowup_map1}. We will enumerate these three charts as $(\bar y=1)_1$, $(\bar \epsilon=1)_2$ and $(\bar y=-1)_3$, respectively, giving reference to how the charts are obtained and the subscripts used. Hence $(\epsilon_1,r_1)$ and $(\epsilon_3,r_3)$ in \eqref{phiPA} and \eqref{phiNA} are local coordinates in $(\bar y=1)_1$ and $(\bar y=-1)_3$, respectively. The coordinate changes between the different charts are obtained from the following equations:
\begin{align*}
\begin{cases}
 r_2 &= r_1\epsilon_1,\quad y_2 = \epsilon_1^{-1},\quad \,\,\,\,\mbox{for $\epsilon_1>0$},\\
 r_2 &= r_3\epsilon_3,\quad y_2 = -\epsilon_3^{-1},\quad\mbox{for $\epsilon_3>0$}.
 \end{cases}
\end{align*}

\qs{In contrast to the usual blowup approach \cite{dumortier_1996,krupa_extending_2001}, we will not divide by $r$ to obtain improved properties. Instead, we gain hyperbolicity and recover the piecewise smooth systems by dividing by common factors $\epsilon_i$, $i=1,3$, in the charts $(\bar y=1)_1$ and $(\bar y=-1)_3$, respectively. This is carefully described in \cite{kristiansen2018a}, see also \appref{appA}; notice e.g. \remref{yZp}.}

%
 

%
                        

%


\qs{Under assumption (A2), we also have that the perturbation for $y\ne 0$ is regular:
\begin{lemma}\lemmalab{regular}
Consider (A0), (A1) and (A2). 
 Fix any (small) constant $c>0$ and consider \eqref{ztf} within the set defined by $\vert y\vert \ge c>0$. This system is a regular perturbation of the PWS system \eqref{ztfpws} for $\epsilon=0$.  
 \end{lemma}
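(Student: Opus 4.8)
The plan is to prove that on the region $\vert y\vert\ge c$ the right-hand side of \eqref{ztf} extends to a $C^l$ function of $(z,\epsilon,\alpha)$ up to and including $\epsilon=0$, whose value at $\epsilon=0$ is exactly the corresponding branch $Z_\pm$ of the PWS field \eqref{ztfpws}. This is precisely the content of ``regular perturbation'': no blowup or rescaling of $\epsilon$ is needed, and smooth dependence of the flow on $\epsilon$ then follows from standard ODE theory. By the symmetry between $y>0$ and $y<0$ it suffices to treat $y\ge c$; the case $y\le -c$ is identical with \eqref{phiNA} in place of \eqref{phiPA}.

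First I would use the affine structure (A0) to isolate the $\epsilon$-dependence in a single scalar factor,
\begin{align*}
Z(z,\phi(y\epsilon^{-1},\epsilon),\alpha) = Z_+(z,\alpha) + \left(\phi(y\epsilon^{-1},\epsilon)-1\right)\left(Z_+(z,\alpha)-Z_-(z,\alpha)\right).
\end{align*}
The key observation is that on $y\ge c$ this factor is governed by the chart $(\bar y=1)_1$: putting $\epsilon_1=\epsilon/y$ and $r_1=y$ reproduces $y\epsilon^{-1}=\epsilon_1^{-1}$ and $\epsilon=r_1\epsilon_1$ identically (not merely asymptotically), so (A2), specifically \eqref{phiPA}, gives the exact identity
\begin{align*}
\phi(y\epsilon^{-1},\epsilon)-1 = -\left(\frac{\epsilon}{y}\right)^{k_+}\phi_+\!\left(\frac{\epsilon}{y},y\right),
\end{align*}
and hence $Z = Z_+(z,\alpha) - (\epsilon/y)^{k_+}\,\phi_+(\epsilon/y,y)\,\bigl(Z_+(z,\alpha)-Z_-(z,\alpha)\bigr)$.

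Next I would check smoothness and the domain constraints. Since $\vert y\vert\ge c>0$, the map $(y,\epsilon)\mapsto \epsilon/y$ is $C^\infty$ (there is no division by zero), while $\phi_+$ and $Z_\pm$ are smooth by (A2) and (A0); the displayed expression is therefore $C^l$ in $(z,\epsilon,\alpha)$ jointly, including at $\epsilon=0$, where it reduces to $Z_+(z,\alpha)$ as required. To legitimately invoke \eqref{phiPA} one only needs $(\epsilon_1,r_1)=(\epsilon/y,y)\in[0,c_0]\times[0,c_1]$; working locally in $U_\xi$ (so $y\le\xi$) and shrinking $\epsilon_0$ so that $\epsilon_0/c\le c_0$ secures this.

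I do not anticipate a genuine analytic obstacle. The real content of the statement is that (A1) alone yields only the pointwise limit $\phi\to 1$ (resp.\ $0$) through \eqref{phiLimit}, whereas the quantitative decay rates encoded in (A2) upgrade this to a smooth, $\epsilon$-uniform expansion. Accordingly, the only step requiring care is the bookkeeping of chart coordinates --- recognizing that $y\epsilon^{-1}$ with $y$ bounded away from $0$ is exactly the regime $\epsilon_1\to 0$ of chart $(\bar y=1)_1$ --- together with the elementary but essential remark that dividing by $y$ is harmless precisely because $\vert y\vert\ge c$.
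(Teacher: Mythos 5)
Your proposal is correct and follows essentially the same route as the paper's own proof: both use (A0) to isolate the scalar factor, then invoke (A2) via the identification $(\epsilon_1,r_1)=(\epsilon/y,y)$ to rewrite the right-hand side as $Z_+$ plus a smooth $\mathcal O(\epsilon^k)$ perturbation on $y\ge c$. Your extra care about the domain constraint $(\epsilon/y,y)\in[0,c_0]\times[0,c_1]$ is a detail the paper leaves implicit, but it is the same argument.
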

 \begin{proof}
  Consider $y\ge c$. The case $y\le -c$ is identical and therefore left out. Then by (A0), (A2) and \eqref{r1epsilon1} we can write \eqref{ztf}
  as
  \begin{align*}
\dot z &=\left(1-(y^{-1} \epsilon)^k \phi_+(\epsilon y^{-1},y)\right)Z_+(z,\alpha)+(y^{-1}\epsilon)^k \phi_+(\epsilon y^{-1},y)Z_-(z,\alpha),
  \end{align*}
  which within $y\ge c$ is a smooth perturbation of $\dot z=Z_+(z,\alpha)$, the perturbation being of $\mathcal O(\epsilon^k)$, uniformly for $y\ge c$,  as $\epsilon\rightarrow 0$.
 \end{proof}
}

\begin{remark}
Following \eqref{gklimit}, we may realise that \eqref{gkfunc} satisfies (A2) with $k_\pm = 1$ and 
\begin{align*}
 \phi_+ (0,0) = \phi_-(0,0) = 1.
\end{align*}
In later computations, we will use the following regularization function
\begin{align}\eqlab{phiexample}
 \phi(s,\epsilon) = \frac12 \left(1+\frac{s}{\sqrt{s^2+1}}\right)
\end{align}
for which
\begin{align*}
 \phi_+(\epsilon_1,r_1\epsilon_1 ) &=  \frac{\sqrt{1+\epsilon_1^2}-1}{2\sqrt{1+\epsilon_1^2}}=\frac{1}{4}\epsilon_1^2 +\mathcal O(\epsilon_1^4),
\end{align*}
and $\phi_-(\epsilon_3,r_3\epsilon_3 ) = \phi_+(\epsilon_3,r_3\epsilon_3)$. Hence $k_\pm = 2$ for \eqref{phiexample}. Functions like \eqref{phicor} and $\tanh$, where $k_\pm=\infty$ and hence excluded by (A2), are more difficult, because the blowup method has to be adapted to deal with the non-algebraic terms, see \cite{kristiansen2017a}. This is also the subject of the recent paper \cite{kristiansen2019d}.  
\end{remark}

\subsection*{A local transition map} 
In the following, we state our first main result.
Let $\delta\in (0,\xi)$ and consider two sections $\Sigma_{L}$ and $\Sigma_R$  transverse to $Z_+$ within $y=\delta\in (0,\xi)$ such that points in $\Sigma_L$ flow to points in $\Sigma_R$ in finite time by following the flow of $Z_+$. 
Specifically, we take
\begin{align}
 \Sigma_L &= \{(x,y)\vert y=\delta,\,x\in I_L\subset (-\xi,0)\},\eqlab{SigmaL0}\\
 \Sigma_R &= \{(x,y)\vert y=\delta,\,x\in I_R\subset (0,\xi)\},\eqlab{SigmaR0}
\end{align}
where $I_L$ and $I_R$ are closed intervals. By adjusting $\delta$, $\xi$, $I_L$ and $I_R$, if necessary, we may assume that $\gamma$ intersects $\Sigma_L$ and $\Sigma_R$ in their interior and that the $x$-values of the intersection, $\gamma_L$ and $\gamma_R$, respectively, satisfy $\gamma_L<0<\gamma_R$. See \figref{visible}. 
Then we define $Q(\cdot,\epsilon)$ as the transition map (of Dulac type) $I_L\ni x\mapsto Q(x,\epsilon)\in I_R$ obtained by the first intersection $(Q(x,\epsilon),\delta)\in \Sigma_R$ of the forward flow, defined by \eqref{fA}, with $Z_\pm$ as in \eqref{ZpmNF}, of the point $(x,\delta)\in \Sigma_L$. Since $k_-$ plays little role, recall (A2), we set $$k=k_+,$$ for simplicity in the following.


\begin{figure}
\begin{center}
\includegraphics[width=.7\textwidth]{./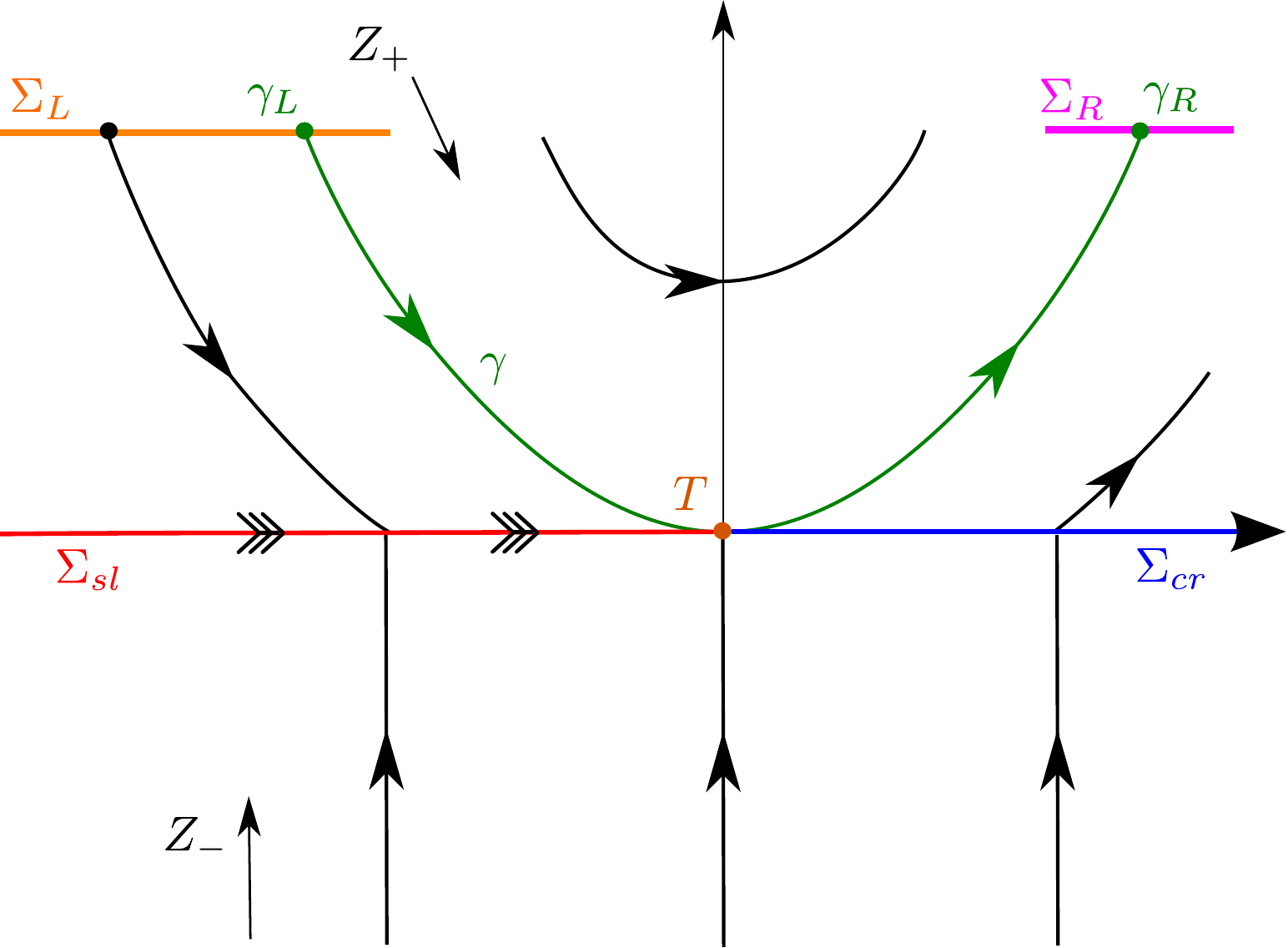}
  \end{center}\caption{The visible fold. $Z_+$ has a quadratic tangency with $\Sigma$ at $(x,y)=(0,0)$ while $Z_-(0,0)$ is transverse. Along the sliding region, $\Sigma_{sl}=\{x<0\}$, the Filippov vector-field gives $\dot x>0$. Every point in $\Sigma_L$ with $x<\gamma_L$ therefore reaches $\Sigma_R$ at $x=\gamma_R$ by following the Filippov flow. See also \remref{pwsmap}.  }
 \figlab{visible}
              \end{figure}

\begin{theorem}\thmlab{main1}
 Consider \eqref{ztf}, satisfying (A0), specifically \eqref{fA} with \eqref{ZpmNF}, and suppose (A1)-(A2). 
  \begin{itemize}
 \item[(a)] \qs{\cite{Buzzi06,Llibre07,llibre2009a,kristiansen2018a}} Fix any $0<\nu<\xi$ and let $J=[-\xi,-\nu]$. Then there exists an $\epsilon_0>0$ such that for any $\epsilon\in (0,\epsilon_0)$, there exists a locally invariant manifold $S_{\epsilon}$ as a graph over $J$:
 \begin{align*}
  S_{\epsilon}:\,y = \epsilon h(x,\epsilon),\,x\in J,
 \end{align*}
where $h$ is smooth in both variables. The manifold has an invariant Lipschitz foliation of stable fibers along which orbits contract exponentially towards $S_{\epsilon}$. For $\epsilon=0$ these fibers coincide with the orbits of $Z_\pm$ reaching $\Sigma\cap \{x\in J\}$ after a finite time. Moreover, $Z\vert_{S_\epsilon}$ is a regular perturbation of the Filippov vector-field.
\item[(b)] The forward flow of $S_{\epsilon}$ intersects $\Sigma_R$ in $(m(\epsilon),\delta)$ where 
\begin{align}
 m(\epsilon) = \gamma_R+\epsilon^{2k/(2k+1)} m_1(\epsilon),\eqlab{mEpsSM}
\end{align}
with $m_1$ continuous. 
\item[(c)] Fix $\theta>0$ so small that $0<\theta-\gamma_L<\xi$ and let $K=[-\xi,\gamma_L-\theta]\subset I_L$. Consider $Q_K(\cdot,\epsilon) = Q(\cdot,\epsilon)\vert_K:K\rightarrow I_R$. Then $Q_K$ is a strong contraction:
\begin{align*}
 Q_K(x,\epsilon ) = m(\epsilon)+\mathcal O(e^{-c/\epsilon}),\quad \partial_x Q_K(x,\epsilon) = \mathcal O(e^{-c/\epsilon}).
\end{align*}
\item[(d)] Fix $\upsilon \in (-1,0)$. Then for any $c>0$ sufficiently small, there exist positive numbers $\epsilon_0$, $\delta,\xi$, $\chi$ and intervals $I_L$ and $I_R$ such that the following holds for all $0<\epsilon\le \epsilon_0$.
\vspace{0.3cm}
\begin{itemize}
 \setlength\itemsep{1em}
\item[(i)] $\vert Q'_x(x,\epsilon)\vert \le c$ for all $x\in I_L\cap \{x\le \gamma_L-\chi\epsilon^{2k/(2k+1)}\}$.
\item[(ii)] $Q'_x(x,\epsilon)<0,\,Q''_{xx}(x,\epsilon)<0$ for all $x\in I_L^C:=I_L\cap \{\vert x-\gamma_L\vert \le \chi \epsilon^{2k/(2k+1)}\}$. \qs{In particular, there exists a unique $x\in I_L^C$ such $Q'_x(x,\epsilon)=\upsilon$ for which
\begin{align}
Q''_{xx}(x,\epsilon)\le -c^{-1}.\eqlab{concaveDown}
\end{align}
}
 \item[(iii)] $1-c\le \vert  Q'_x(x,\epsilon)\vert \le 1+c$ for all $x\in I_L\cap \{x\ge \gamma_L+\chi \epsilon^{2k/(2k+1)}\}$.
\end{itemize}
\vspace{0.3cm}
\end{itemize}

\end{theorem}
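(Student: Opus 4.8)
The plan is to analyze the transition map $Q(\cdot,\epsilon)$ by decomposing it into a composition of maps through the blowup charts $(\bar y=1)_1$, $(\bar\epsilon=1)_2$, and $(\bar y=-1)_3$, following the construction in \cite{kristiansen2018a}. The transition from $\Sigma_L$ down toward $\Sigma$ and then back up to $\Sigma_R$ naturally factors through a passage near the blown-up cylinder over the fold point $q=(0,0)$. For part (a), I would simply invoke the cited Fenichel-type results: away from the fold, the slow manifold $S_\epsilon$ perturbs from the critical manifold obtained after blowup, and $Z|_{S_\epsilon}$ limits to the Filippov field by the remark following \eqref{ztfpws}; the stable foliation comes from normal hyperbolicity in the charts after division by the common factors $\epsilon_i$.

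For parts (b) and (c), the key is that orbits entering through $K$ (bounded away from $\gamma_L$) are trapped on $S_\epsilon$ and slide with $\dot x\ge c>0$ until they reach the neighborhood of the fold. Because contraction onto $S_\epsilon$ is exponential (rate $\mathcal O(e^{-c/\epsilon})$), all such orbits are funneled onto essentially the same trajectory before leaving the fold region, which gives the strong-contraction estimates in (c): both $Q_K$ and $\partial_x Q_K$ collapse to within $\mathcal O(e^{-c/\epsilon})$ of their values on $S_\epsilon$. The exit point $m(\epsilon)$ in (b) is governed by the passage through the chart $(\bar\epsilon=1)_2$, where the dynamics near the fold is, to leading order, the normal form of a generic fold/jump in slow-fast systems; the characteristic exponent $2k/(2k+1)$ in \eqref{mEpsSM} arises from matching the $\epsilon^k$-scale perturbation in (A2) against the quadratic tangency $2x$ in \eqref{ZpmNF}, i.e.\ balancing $r^{2k}$ against the blowup weight so that the exit is displaced by $\epsilon^{2k/(2k+1)}$ from the Filippov prediction $\gamma_R$. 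I would extract the continuity of $m_1$ from the smooth dependence of the chart maps on the blown-up coordinates.

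Part (d) is the heart of the theorem and the main obstacle. The three regimes correspond to whether the entry point $x$ lies below, within, or above the $\epsilon^{2k/(2k+1)}$-window around $\gamma_L$, i.e.\ whether the incoming orbit lands on $S_\epsilon$, grazes the edge of the slow manifold, or overshoots it and passes above the fold without sliding. For (i) and (iii), I would again use the chart decomposition: entries well below $\gamma_L$ contract onto $S_\epsilon$ (derivative $\le c$, consistent with (c)), whereas entries well above $\gamma_L$ correspond to orbits that cross near the visible branch of $\gamma$ and experience an almost regular transition, giving $|Q'_x|\in[1-c,1+c]$ by the transversality of $Z_+$ and smooth dependence on initial conditions away from the tangency. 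The delicate part is the transition window $I_L^C$ in (ii): here I would set up a carefully scaled coordinate (a further rescaling in chart $(\bar\epsilon=1)_2$ adapted to the width $\chi\epsilon^{2k/(2k+1)}$) in which $Q$ interpolates monotonically between the contracting and the order-one regimes, and where the curvature $Q''_{xx}$ is strictly negative.

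The crux is establishing the sign and uniform lower bound on $Q''_{xx}$ in \eqref{concaveDown}. I expect this to be the hardest step, since it requires second-order control of the transition map uniformly in $\epsilon$ across the blowup, not merely the $C^1$ collapse used in (c). The plan is to differentiate the flow through chart $(\bar\epsilon=1)_2$ twice with respect to the initial condition, using the normal-form structure \eqref{ZpmNF} (the quadratic $2x$ term supplying the fold) to show that $Q'_x$ decreases strictly and crosses every value $\upsilon\in(-1,0)$ exactly once, with the derivative of $Q'_x$ bounded away from zero by $c^{-1}$ at that crossing. The monotone strict decrease of $Q'_x$ yields both the uniqueness of the point where $Q'_x=\upsilon$ and, via $Q''_{xx}<0$, the concavity; the rescaling by $\epsilon^{2k/(2k+1)}$ is what converts the blown-up derivative estimates into the stated uniform bound $Q''_{xx}\le -c^{-1}$. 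I would verify the whole construction explicitly on the example \eqref{phiexample} to confirm the exponents and constants.
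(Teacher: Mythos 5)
Your outline correctly reproduces the coarse architecture (cylindrical blowup, chart decomposition, three regimes for $x$ relative to $\gamma_L$), and parts (a), (c), and the exponent heuristic in (b) are in the right spirit. But there are two genuine gaps. First, you work only with the cylindrical blowup \eqref{cyl_blowup_map1} and a ``further rescaling in chart $(\bar\epsilon=1)_2$ adapted to the width $\chi\epsilon^{2k/(2k+1)}$''; what is actually needed is a second, weighted \emph{spherical} blowup of the fully nonhyperbolic point $\overline T$ (the imprint of the fold on the cylinder), with weights $(2k,k,1)$ on $(r,x,\epsilon)$ as in \eqref{blowupSphere}. This is what separates the tangencies into the four points $p_a,p_f,p_L,p_R$ and produces the hyperbolicity needed for uniform $C^2$ control of $Q$; a pure rescaling in the scaling chart does not suffice here because, unlike the Sotomayor--Teixera case, none of the directional charts reduce to scalings. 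Moreover, at $p_L$ and $p_R$ there is a strong resonance $\lambda_2-\lambda_1-\lambda_3=0$, see \eqref{strongres}, so the smooth linearization you implicitly rely on to propagate second-derivative estimates through these passages is not available directly; the paper circumvents this with a partial linearization built from the flow-box theorem for $Z_+$ (Lemmas \ref{lemma:tilderho1tildex1Lemma} and \ref{lemma:rho1x1eps1Tilde}).

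Second, and more seriously, the crux of (d)(ii) --- that $Q_x'$ is strictly decreasing across the window with $Q''_{xx}$ bounded by $-c^{-1}$ --- is exactly the step you defer to ``differentiate the flow twice and show that $Q'_x$ decreases strictly,'' and no argument is supplied for why the second variation has a sign. In the paper this reduces, after the spherical blowup and the change of variables \eqref{uvc}, to proving $U'(u)\in(-1,0)$ and $U''(u)<0$ for the transit map of the Chini equation \eqref{chini}, $\dot u=1$, $\dot v=2u+v^{-k}$, which admits no solution by quadrature; the proof of \lemmaref{PC} is an indirect comparison/contradiction argument on the time-of-flight $T(u_0)$ using the auxiliary variational quantities $v_1=v'_u$, $w=2\dot v-v_1$ and $w_1$. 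Without this (or an equivalent monotonicity argument), the existence, uniqueness and nondegeneracy claims in \eqref{concaveDown} remain unproven; note also that the uniform lower bound $-c^{-1}$ comes not from the Chini analysis alone but from the large prefactor $\chi\epsilon^{-2k/(2k+1)}$ that survives in the second derivative after composing the expanding entry map near $p_L$ with the contracting exit map near $p_R$ (whose first-order effects cancel, but whose second-order effects do not).
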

\begin{remark}\remlab{ref1}
 \qs{As highlighted, \thmref{main1} (a) is known from many references, see e.g. \cite{Buzzi06,Llibre07,llibre2009a}, in related and even more general contexts. Notice that the existence of the invariant manifold follows directly by inserting the scaling $y=\epsilon y_2$. In terms of $(x,y_2)$ the system is slow-fast such that Fenichel's theory \cite{fen1,fen2,fen3} applies, see also \eqref{scalingEqns} in \appref{appA}. This produces $S_\epsilon$ as a slow manifold, having a smooth foliation of stable fibers within compact subsets of the $(x,y_2)$-system. For the Sotomayor-Teixera regularization functions, where one can restrict the $(x,y_2)$-system to a compact strip $y_2\in[-1,1]$, recall \eqref{psi}, this produces \thmref{main1}(a). For the ``asymptotic'' regularization functions used in the present paper, the properties of the stable foliation is slightly more technical. For this reason, and for the sake of readability and completeness, we have decided to include (a) in the main theorem.
 }
 
 \qs{More specifically, we emphasize that the results in \cite[Theorem 2.1]{bonet-rev2016a}, valid for the Sotomayor-Teixera regularization functions \eqref{psi}, are similar to (a), (b) and (c). Notice, however, that the remainder of $m(\epsilon)$ in the setting of \cite{bonet-rev2016a} is $\mathcal O(\epsilon)$ (whereas it is $\mathcal O(\epsilon^{2k/(2k+1)})$ in \eqref{mEpsSM}). Nevertheless, the details of the mapping $Q$ in \thmref{main1} (d), covering a full neighborhood of $\gamma_L$, is not available in \cite{bonet-rev2016a}. It is this detailed information that we will use in the following to prove rigorous statements about the grazing bifurcation. }

\end{remark}
%

\begin{remark}\remlab{pwsmap}
 Notice, it is possible to obtain a ``singular'' map $Q_0:\Sigma_L\rightarrow \Sigma_R$ of the PWS Filippov system. This mapping is only continuous being of the following form
\vspace{0.3cm}
\begin{itemize}
 \setlength\itemsep{1em}
\item[(i)] $Q'_{0}(x)=0$ for all $x<\gamma_L$;
\item[(ii)] $Q'_{0}$ not defined for $x=\gamma_L$;
 \item[(iii)] $1-c\le \vert  Q'_{0}(x)\vert \le 1+c$ for all $x>\gamma_L$.
\end{itemize}
\vspace{0.3cm} 
This holds for any $c>0$ provided that $\delta$ and $I_L$ are appropriately adjusted.
Compare with \thmref{main1}(d). (i) is due to the fact that every point in $I_L$ with $x<\gamma_L$ reaches the sliding segment, see \figref{visible}. Hence:
\begin{align*}
Q_0(x) = \gamma_R,
\end{align*}
for all $x<\gamma_L$. 


\end{remark}

\subsection*{Application to a grazing bifurcation} 
To present our second main result, we now assume the following:
\begin{itemize}
 \item[(B1)] Suppose that $Z_+$ has a hyperbolic and repelling limit cycle $\Gamma_0$ for $\alpha=0$ with a unique quadratic tangency with $\Sigma=\{y=0\}$ at the point $q=(0,0)$.
 \end{itemize}
Since $\Gamma_0$ is hyperbolic there exists a local family $\{\Gamma_\alpha \}_{\alpha\in I}$, where
\begin{align}
I=(-a,a), \quad a>0,\eqlab{IInt}  
\end{align}
of hyperbolic and repelling limit cycles of $Z_+$. 
\begin{itemize}
 \item[(B2)] Let $Y(\alpha)=\text{min}\, y(t)$ along $\Gamma_\alpha$ so that $Y(0)=0$ by assumption (B1). Suppose that 
 \begin{align*}
  Y'(0) >0.
 \end{align*}
\end{itemize}
By (B1) and (B2) and the implicit function theorem, $\Gamma_\alpha$ therefore, for $a>0$ sufficiently small, recall \eqref{IInt}, intersects $\{y=0\}$ only for $\alpha\le 0$, doing so twice for $\alpha<0$ and once for $\alpha=0$.
Finally:
\begin{itemize}
 \item[(B3)] Suppose that $Z_-$ has a positive $y$-component at $(x,y)=(0,0)$ for $\alpha=0$, i.e. $Z_-f(0,0)>0$. 
\end{itemize}
We illustrate the setting in \figref{grazing}. 
As a consequence of (B1) and (B3), and the implicit function theorem, the PWS system $(Z_-,Z_+)$ has a visible fold near $(x,y)=(0,0)$ for all $\alpha \in I$ (after possibly restricting $a>0$ further). In fact, also by the implicit function theorem, the $x$-value of this fold point depends smoothly on $\alpha$ and we can therefore shift it to $(x,y)=(0,0)$ for all $\alpha$. Moreover by (A3), we can apply the result of \cite{bonet-rev2016a} and bring the PWS system into the form \eqref{ZpmNF} near the fold. We will now study the bifurcation of limit cycles that occur for \eqref{fA} near $\alpha=0$ for all $0<\epsilon\ll 1$. (In the PWS setting, this bifurcation is known as the grazing bifurcation, see e.g. \cite[Fig. 14, section 4.11]{Kuznetsov2003}.) \qs{For this we study the Poincar\'e map (of Dulac type) $P(\cdot,\alpha,\epsilon):I_L\rightarrow I_L$ obtained by the forward flow. This mapping is well-defined by the assumptions (B1)-(B3) and by \thmref{main1}, based on assumptions (A1)-(A2). We compose $P(\cdot,\alpha,\epsilon)$ into two parts: The ``local'' mapping $Q(\cdot,\alpha,\epsilon):I_L\rightarrow I_R$, studied in \thmref{main1}, and a ``global'' mapping $R(\cdot,\alpha,\epsilon):I_R\rightarrow I_L$:
\begin{align}
 P(x,\alpha,\epsilon) = R(Q(x,\alpha,\epsilon),\epsilon,\alpha).\eqlab{QR}
\end{align}
By (A2), recall \lemmaref{regular}, $x\mapsto R(x,\alpha,\epsilon)$ is a regular perturbation of the associated mapping $x\mapsto R(x,0,\alpha)$ obtain from the $Z_+$ system.}
\begin{lemma}\lemmalab{Rmap}
 Assume (B1) and (B2). The mapping $R$ is smooth in all of its arguments. Also there exists a $\omega>0$ such that upon decreasing $\xi$ and $\delta$, if necessary, the map satisfies:
 \begin{align}
  R(\gamma_R,0,0) &= \gamma_L,\eqlab{Rprop1}\\
  R'_x(\gamma_R,0,0)&<-1-\omega,\eqlab{Rprop2}\\
  R'_\alpha(\gamma_R,0,0)&>\omega. \eqlab{Rprop3}
 \end{align}
\end{lemma}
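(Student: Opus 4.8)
The plan is to realise $R$, at $\epsilon=0$, as the Poincar\'e transition map of the smooth flow of $Z_+(\cdot,\alpha)$ between the transverse sections $\Sigma_R$ and $\Sigma_L$, taken along the ``long'' arc of the cycle that avoids the fold. Indeed, the tangency at the origin is traversed by the local map $Q$ of \thmref{main1}, not by $R$; hence the arc of $\Gamma_0$ joining $(\gamma_R,\delta)$ to $(\gamma_L,\delta)$ the long way round stays in $\{y\ge c\}$ for some $c>0$, where by \lemmaref{regular} the $\epsilon$-dependence is a regular (smooth) perturbation. Smooth dependence of solutions on initial conditions and on $(\alpha,\epsilon)$, together with transversality of $Z_+$ to $\Sigma_L$ and the implicit function theorem applied to the hitting-time equation $y=\delta$, then yields that $R$ is $C^l$ in all arguments. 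Since $\Gamma_0$ is a closed orbit through both $(\gamma_R,\delta)$ and $(\gamma_L,\delta)$, following it forward from $\Sigma_R$ returns to $\Sigma_L$ exactly at $(\gamma_L,\delta)$; this is \eqref{Rprop1}.

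For \eqref{Rprop2} I would invoke hyperbolicity. The first-return map of the $Z_+$-flow to $\Sigma_R$ factors as $Q_+\circ R$, where $Q_+\colon\Sigma_L\to\Sigma_R$ is the smooth passage of $Z_+$ through the fold region; its derivative at the fixed point $\gamma_R$ equals the nontrivial Floquet multiplier $\mu=\exp\big(\int_{\Gamma_0}\nabla\!\cdot Z_+\,\d t\big)>1$ of the repelling cycle. In the normal form \eqref{ZpmNF} the leading fold behaviour is the symmetric parabola $y=x^2$, so $Q_+$ is orientation-reversing with $\partial_xQ_+(\gamma_L)=-1+\mathcal O(\delta)$. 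Hence $\partial_xR(\gamma_R,0,0)=\mu/\partial_xQ_+(\gamma_L)=-\mu+\mathcal O(\delta)$, and decreasing $\delta$ gives \eqref{Rprop2}, e.g. with $\omega=(\mu-1)/3$.

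The estimate \eqref{Rprop3} is where (B2) enters and is the crux. For $\alpha$ near $0$ the hyperbolic cycle $\Gamma_\alpha$ persists and meets $\Sigma_R,\Sigma_L$ at points $x_R^\ast(\alpha),x_L^\ast(\alpha)$ with $x_R^\ast(0)=\gamma_R$, $x_L^\ast(0)=\gamma_L$, dipping to minimal height $Y(\alpha)$ over the fold. Being closed it satisfies $R(x_R^\ast(\alpha),0,\alpha)=x_L^\ast(\alpha)$, while the fold passage ties both crossings to the single quantity $Y$ through smooth functions $x_R^\ast=\mathcal X_R(Y,\alpha)$, $x_L^\ast=\mathcal X_L(Y,\alpha)$ whose $Y$-derivatives are $\mp\tfrac1{2\sqrt\delta}+\mathcal O(1)$. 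Differentiating the closed-orbit identity at $\alpha=0$ and inserting $\partial_xR=-\mu$ from \eqref{Rprop2} expresses $\partial_\alpha R(\gamma_R,0,0)$ as $Y'(0)$ times a coefficient of size $(\mu-1)/\sqrt\delta$, plus bounded fold terms. Since $\mu>1$ by (B1) and $Y'(0)>0$ by (B2), this coefficient is bounded away from zero, so $\partial_\alpha R(\gamma_R,0,0)$ is bounded away from zero, and in fact grows as $\delta$ is decreased, comfortably giving the lower bound $\omega$. The hard part is precisely the remaining bookkeeping: I must verify that $\mathcal X_{R,L}$ are smooth at the fold value $Y=0$ (they are, because the square-root singularity of the fold map sits at $Y=\delta$, i.e. grazing of the sections, not at $Y=0$), pin down the definite sign in \eqref{Rprop3} from the orientation fixed by the visible-fold normal form rather than its opposite, and control the $\mathcal O(1)$ and $\mathcal O(\delta)$ remainders uniformly so that both the sign and the bound survive after ``decreasing $\xi$ and $\delta$''.
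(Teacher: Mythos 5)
Your treatment of smoothness, of \eqref{Rprop1}, and of \eqref{Rprop2} is correct and is essentially the paper's own argument: the paper also writes the first-return map of the $Z_+$-flow as the composition of $R$ with a local passage map $\widetilde Q:\Sigma_L\to\Sigma_R$, notes that $-\widetilde Q$ can be made arbitrarily close to the identity by shrinking $\delta$ (short integration time through the fold region), and divides the multiplier $\mu>1$ by $\widetilde Q_x'\approx -1$ to get $R_x'(\gamma_R,0,0)\approx-\mu<-1-\omega$. So far, no objection.

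The gap is in \eqref{Rprop3}, and it is exactly the step you defer as ``remaining bookkeeping'': the sign. If you actually carry out the computation you set up, it does not give the claimed inequality. With $x_R^\ast(\alpha)\approx\sqrt{\delta-Y(\alpha)}$, $x_L^\ast(\alpha)\approx-\sqrt{\delta-Y(\alpha)}$ and the closed-orbit identity $R(x_R^\ast(\alpha),0,\alpha)=x_L^\ast(\alpha)$, differentiation at $\alpha=0$ gives
\begin{align*}
R_\alpha'(\gamma_R,0,0)=(x_L^\ast)'(0)-R_x'(\gamma_R,0,0)\,(x_R^\ast)'(0)=\bigl(1+R_x'(\gamma_R,0,0)\bigr)\frac{Y'(0)}{2\sqrt{\delta}}+\mathcal O(1)=\frac{(1-\mu)\,Y'(0)}{2\sqrt{\delta}}+\mathcal O(1),
\end{align*}
since both $(x_{L}^\ast)'(0)$ and $-(x_R^\ast)'(0)$ equal $Y'(0)/(2\sqrt\delta)+\mathcal O(1)$. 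The coefficient of $Y'(0)$ is $(1-\mu)/(2\sqrt\delta)$, which is \emph{negative} because $\mu>1$; writing it as ``of size $(\mu-1)/\sqrt\delta$'' silently flips the sign. So under (B2) as literally stated ($Y'(0)>0$) your construction yields $R_\alpha'(\gamma_R,0,0)<0$, the opposite of \eqref{Rprop3}. Since the sign is the entire content of \eqref{Rprop3} (the magnitude, and in particular the spurious $1/\sqrt\delta$ growth, is irrelevant — only a fixed $\omega>0$ is needed), the proposal as written does not prove the lemma. To repair it you must either locate an error in the orientation bookkeeping above, or observe that the configuration actually used later in the paper (\lemmaref{finallem}, where $\Gamma_\alpha\subset\{y>0\}$ for $\alpha<0$, and \corref{model}, where the cycle grazes from inside $y>0$ as $\alpha$ increases to $\alpha_\ast$) corresponds to $Y'(0)<0$, in which case the same formula does give $R_\alpha'>\omega>0$; your proof should confront this sign discrepancy between (B2) and \eqref{Rprop3} explicitly rather than postpone it, because it cannot be absorbed into $\mathcal O(\delta)$ remainders.
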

\begin{proof}
\eqref{Rprop1} holds by assumption (B1) and the definition of $\gamma_L$ and $\gamma_R$. By (B1), $\Gamma_0$ is a hyperbolic but repelling limit cycle of $Z_+$. Therefore $P'_x(\gamma_R,0,0)>1$, as a mapping obtained from $Z_+$ at $\epsilon=0$ only, and hence by decomposing $P$ into $R$ and a local map $\widetilde Q:\Sigma_L\rightarrow \Sigma_R$, say, we obtain, upon restricting $\xi$ and $\delta$, that $-\widetilde Q$ is as close to the identity as desired. Indeed, as a mapping obtained from the flow of $Z_+$, $\widetilde Q$ is regular and obtained by a short integration time. The integration time can be decreased by decreasing $\delta$. By the chain rule, we therefore obtain \eqref{Rprop2}. Finally, \eqref{Rprop3} follows from (B2) using \eqref{QR} and the fact that $-\widetilde Q$ is close to the identity map. We leave out the simple details. 

\end{proof}

\begin{figure}
\begin{center}
\includegraphics[width=.65\textwidth]{./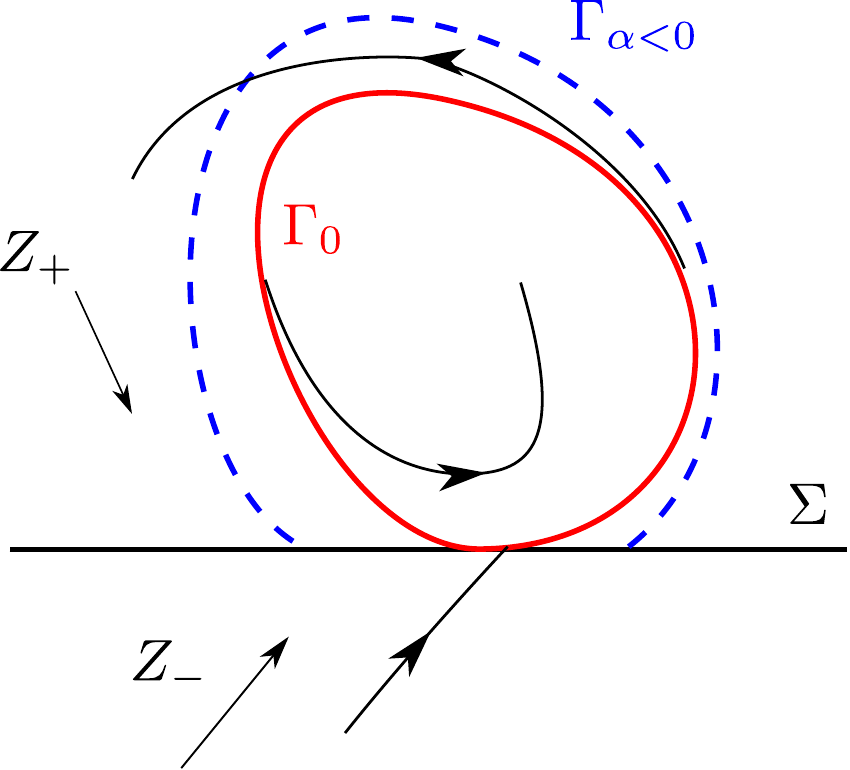}
 \end{center}
\caption{A grazing limit cycle for $\alpha=\epsilon=0$. Assumption (B1) is so that $\Gamma_0$ is repelling for $Z_+$ for $\alpha=0$. By (B2), $\Gamma_{\alpha<0}$ (in blue) locally intersects $\Sigma$ twice near the fold. Black orbits are (backwards) transients for $\alpha=0$, demonstrating the repelling nature of $\Gamma_0$.}
 \figlab{grazing}
              \end{figure}

              \qs{Before stating our theorem on the grazing bifurcation, we recall that a saddle-node bifurcation of a periodic orbit is a saddle-node bifurcation of a fix-point of the Poincar\'e map $P(\cdot,\alpha,\epsilon)$, see \cite[Theorem 1, p. 369]{perko2001a}. By \eqref{QR}, we can write the fix-point equation $P(x,\alpha,\epsilon)=x$ as follows:
              \begin{align}
               Q(x,\alpha,\epsilon) = R^{-1}(x,\alpha,\epsilon),\eqlab{QRFixPoint}
              \end{align}
where $R^{-1}(\cdot,\alpha,\epsilon):I_R\rightarrow I_L$ is the inverse of $R(\cdot,\alpha,\epsilon)$. The saddle-node bifurcation we will describe will be unfolded by the parameter $\alpha$. Using \eqref{QRFixPoint}, it is then straightforward to show that the conditions 
\begin{align}
 Q'_x &=(R^{-1})'_x,\quad (\text{Degeneracy condition})\eqlab{QRcond0}\\
 Q''_{xx} &\ne (R^{-1})''_{xx},\quad (\text{Nondegeneracy condition I})\eqlab{QRcond1}\\
 Q'_{\alpha}& \ne (R^{-1})'_{\alpha},\quad (\text{Nondegeneracy condition II})\eqlab{QRcond2}
\end{align}
are sufficient for a saddle-node bifurcation of $P$ at $(x,\alpha,\epsilon)$ satisfying \eqref{QRFixPoint}. 
Here all partial derivatives $()'_x =\frac{\partial}{\partial x},\,()''_{xx} =\frac{\partial^2}{\partial x^2},\,()'_{\alpha} =\frac{\partial}{\partial \alpha}$ in \eqref{QRcond0}, \eqref{QRcond1} and \eqref{QRcond2} are evaluated at the bifurcation point.  We will refer to the bifurcating nonhyperbolic periodic orbit as the ``saddle-node periodic orbit''.
}

      \qs{ We also recall the definition of the Hausdorff distance between two non-empty compact subsets $X$ and $Y$:
             \begin{align*}
              \text{dist}_{\text{Hausdorff}}(X,Y) = \text{max}\,\left\{\sup_{x\in X}\inf_{y\in Y}d(x,y),\, \sup_{y\in Y}\inf_{x\in X}d(x,y)\right\}.
             \end{align*}
             Here $d(x,y)$ is the distance between two points $x,y\in \mathbb R^2$. 
             $\text{dist}_{\text{Hausdorff}}$ turns the set of non-empty compact subsets into its own metric space \cite{munkres2000a}. }
              
We now have.
\begin{theorem}\thmlab{main2}
 Suppose (A0)-(A2) and (B1)-(B3). Then there exists a locally unique saddle-node bifurcation of limit cycles for all $0<\epsilon\ll 1$ at 
 \begin{align}
  \alpha = \epsilon^{2k/(2k+1)} \alpha_2(\epsilon),\eqlab{alphaEpsSN}
 \end{align}
with $\alpha_2$ continuous, such that limit cycles only exist within $\alpha\in I$ for $$\alpha\le \epsilon^{2k/(2k+1)} \alpha_2(\epsilon),$$ two for $\alpha<\epsilon^{2k/(2k+1)} \alpha_2(\epsilon)$ and precisely one for $$\alpha= \epsilon^{2k/(2k+1)} \alpha_2(\epsilon).$$ The saddle-node periodic orbit for $\alpha=\epsilon^{2k/(2k+1)} \alpha_2(\epsilon)$ converges in Hausdorff distance to the grazing limit cycle $\Gamma_0$ of $Z_+$ as $\epsilon\rightarrow 0$. 
\end{theorem}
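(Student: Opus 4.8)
I would work entirely with the fix-point equation \eqref{QRFixPoint} and the difference
\begin{align*}
 F(x,\alpha,\epsilon) := Q(x,\alpha,\epsilon) - R^{-1}(x,\alpha,\epsilon),
\end{align*}
whose zeros are exactly the fixed points of $P$, i.e. the limit cycles. By the computation preceding the theorem, a saddle-node sits at a point where $F=0$ together with the degeneracy $F'_x=0$ (condition \eqref{QRcond0}) and the two nondegeneracy inequalities $F''_{xx}\ne 0$ and $F'_\alpha\ne 0$ (conditions \eqref{QRcond1} and \eqref{QRcond2}). The whole proof is the verification that this system has, for each small $\epsilon$, a locally unique solution with the stated properties, plus the geometric count. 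Throughout I use that $R$ is smooth with $R'_x<-1-\omega$ by \eqref{Rprop2}, so the inverse branch satisfies $(R^{-1})'_x=1/R'_x\in(-(1+\omega)^{-1},0)$, bounded away from both $0$ and $-1$, with $x$- and $\alpha$-derivatives uniformly bounded (regular perturbation, \lemmaref{regular}).

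\textbf{Localizing the degeneracy and nondegeneracy I.} The first key step is that \eqref{QRcond0} can only hold in the corner $I_L^C$. Indeed, by \thmref{main1}(d) we have $|Q'_x|\le c$ for $x\le\gamma_L-\chi\epsilon^{2k/(2k+1)}$ and $1-c\le|Q'_x|\le 1+c$ for $x\ge\gamma_L+\chi\epsilon^{2k/(2k+1)}$, so for $c$ small neither range meets the value $(R^{-1})'_x$, which is pinned in a compact subinterval of $(-1,0)$; hence $Q'_x=(R^{-1})'_x$ forces $x\in I_L^C$. On $I_L^C$, \thmref{main1}(d) gives $Q'_x<0$, $Q''_{xx}<0$, and $Q'_x$ realizes every value in $(-1,0)$, while $F''_{xx}=Q''_{xx}-(R^{-1})''_{xx}\le -c^{-1}+\mathcal O(1)<0$ by \eqref{concaveDown}. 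Thus $x\mapsto F'_x(x,\alpha,\epsilon)$ is strictly decreasing across $I_L^C$ and has a unique zero $x^*(\alpha,\epsilon)\in I_L^C$, smooth in $\alpha$ by the implicit function theorem applied to $F'_x$ (using $F''_{xx}\ne 0$). This simultaneously delivers nondegeneracy I, which is in fact \emph{reinforced} by the blowup, since $Q''_{xx}\to-\infty$ while $(R^{-1})''_{xx}$ stays bounded.

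\textbf{Reduction, nondegeneracy II, and the scaling.} Substituting $x^*$ into $F$ I define $\Phi(\alpha,\epsilon):=F(x^*(\alpha,\epsilon),\alpha,\epsilon)$. Because $F'_x=0$ along $x^*$, the envelope identity gives $\partial_\alpha\Phi=F'_\alpha|_{x^*}=Q'_\alpha-(R^{-1})'_\alpha$. Now $(R^{-1})'_\alpha=-R'_\alpha/R'_x$ is bounded below by a positive constant by \eqref{Rprop2}--\eqref{Rprop3}, whereas the local contribution $Q'_\alpha$ can be made small by shrinking $\delta$ (the fold passage being a short integration, as in the proof of \lemmaref{Rmap}); hence $\partial_\alpha\Phi<0$ uniformly, which is nondegeneracy II and shows $\Phi$ is strictly decreasing in $\alpha$ with at most one zero. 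For existence and the precise order, I evaluate $\Phi$ at $\alpha=\pm C\epsilon^{2k/(2k+1)}$ and use $m(\epsilon)=\gamma_R+\epsilon^{2k/(2k+1)}m_1(\epsilon)$ from \eqref{mEpsSM} together with the corner width $\chi\epsilon^{2k/(2k+1)}$ to exhibit a sign change; this produces the unique zero $\alpha^*(\epsilon)=\epsilon^{2k/(2k+1)}\alpha_2(\epsilon)$ of \eqref{alphaEpsSN}, with $\alpha_2$ continuous by continuity of the data in $\epsilon$. At $(x^*(\epsilon),\alpha^*(\epsilon))$ all of \eqref{QRcond0}--\eqref{QRcond2} hold, and local uniqueness follows from the confinement of \eqref{QRcond0} to $I_L^C$ together with the monotonicity of $\Phi$.

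\textbf{Count, Hausdorff limit, and the main obstacle.} Since $F$ is concave in $x$ on $I_L^C$, we have $\Phi=\max_{I_L^C}F$; combining this with $F<0$ off $I_L^C$ (the slope comparison of \thmref{main1}(d) against $(R^{-1})'_x$, cf. \remref{pwsmap}) the number of fixed points equals $2,1,0$ according to $\Phi>0,=0,<0$, i.e. according to $\alpha<\alpha^*$, $\alpha=\alpha^*$, $\alpha>\alpha^*$, which is precisely the asserted count. For the Hausdorff statement, $x^*(\epsilon)\to\gamma_L$ and $\alpha^*(\epsilon)\to0$, and the saddle-node orbit decomposes into the global $Z_+$-arc, which converges to the corresponding arc of $\Gamma_0$ by \lemmaref{regular}, and the local fold passage, which is trapped in a shrinking neighborhood of $q$; hence the periodic orbit converges to $\Gamma_0$. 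The main obstacle throughout is the non-uniformity as $\epsilon\to0$: the derivatives of $Q$ blow up and $I_L^C$ shrinks, so no uniform implicit function theorem is available in the original variables. The resolution is that the two places where one needs control behave oppositely — nondegeneracy I is free because $Q''_{xx}\to-\infty$, while the only genuinely uniform estimate required is the lower bound on $|F'_\alpha|$ — and the truly delicate point is extracting the exact order $\epsilon^{2k/(2k+1)}$ of $\alpha^*$ and the continuity of $\alpha_2$ from the quantitative content of \thmref{main1}(b) and (d), which I expect to require a rescaling of $x$ about $\gamma_L$ by $\epsilon^{2k/(2k+1)}$.
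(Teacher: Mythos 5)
Your proposal is sound in outline and rests on the same ingredients as the paper's proof: the decomposition \eqref{QR}, the fix-point formulation \eqref{QRFixPoint} with the saddle-node conditions \eqref{QRcond0}--\eqref{QRcond2}, the confinement of the degeneracy condition to the corner domain (ii) of \thmref{main1}(d), the strong concavity \eqref{concaveDown} for Nondegeneracy I, and \lemmaref{Rmap} for Nondegeneracy II. Where you differ is in the organization of the endgame. You stay in the original variables and argue via the envelope $\Phi(\alpha,\epsilon)$ and unimodality of $F$; the paper instead performs the rescaling \eqref{x2alpha2} \emph{first}, obtains uniformly $C^2$ maps $Q_2$ and $R_2^{-1}$ with explicit $\epsilon=0$ limits (\lemmaref{Q2expr} and \lemmaref{R2expr}, the limit of $Q_2$ being governed by the Chini flow \eqref{chini}), and then applies the implicit function theorem to the pair consisting of the fix-point equation and the degeneracy equation at $\epsilon=0$ (\lemmaref{sn2}); the count away from the corner is handled separately in \lemmaref{finallem}. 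The payoff of the paper's order of operations is exactly the piece you defer at the end: once the problem is regular at $\epsilon=0$, the exact order $\epsilon^{2k/(2k+1)}$, the continuity of $\alpha_2$, and local uniqueness all come from a single application of the implicit function theorem, whereas your envelope argument gives existence, uniqueness and the count at each fixed $\epsilon$ but, as you acknowledge, still needs the same rescaling to finish. The two routes therefore converge; yours front-loads the qualitative picture (essentially \figref{QRgraph}) and back-loads the quantitative step.

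Two points need repair. First, ``$F<0$ off $I_L^C$'' is false for $\alpha<\alpha^*$: the two hyperbolic fixed points lie precisely in the regions (i) and (iii) once $\alpha$ is order-one negative. What your slope comparison actually gives, and what you should say, is that $F'_x>0$ throughout region (i) and $F'_x<0$ throughout region (iii) (for $c$ small, since $(R^{-1})'_x=1/R'_x$ is pinned in a compact subinterval of $(-1,0)$), so that together with $F''_{xx}<0$ on (ii) and the overlap of the three domains, $F$ is strictly unimodal on $I_L$ with its unique maximum at $x^*\in I_L^C$; the count $0/1/2$ then follows from the sign of $\Phi$. Second, the mechanism for $Q'_\alpha$ being small is not that the fold passage is a short integration: unlike the map $\widetilde Q$ in the proof of \lemmaref{Rmap}, the passage through the corner is not a regular, short-time flow map (its $x$-derivatives already degenerate as $\epsilon\rightarrow 0$), so that argument does not transfer. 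The correct reason, which is the content of \lemmaref{Q2expr}, is that the leading-order local dynamics after blowup does not involve $\alpha$ at all, whence $(Q_2)'_{\alpha_2}=\mathcal O(\epsilon^{1/(2k+1)})$ while $(R_2^{-1})'_{\alpha_2}=\upsilon_1+o(1)$ with $\upsilon_1>0$. With these two corrections, and with the closing rescaling you already anticipate, your argument closes.
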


\begin{remark}\remlab{discontmap}
\qss{
The grazing bifurcation for the discontinuity system has been studied by many authors, also in cases where the codimension of $\Sigma$ is greater than one, see e.g. \cite{Bernardo08} and references therein. In this context, the grazing bifurcation (in \cite{Bernardo08}: the grazing-sliding bifurcation) can be studied using formal normal forms of discontinuous return mappings, analogously to $Q_0$ in \remref{pwsmap}, see e.g. \cite[Theorem 8.3 and section 8.5.3]{Bernardo08}. \cite{Bernardo08} also describes -- using these normal forms and results on border collision bifurcations for PWS maps, see e.g. \cite[Theorem 3.4]{Bernardo08} -- how  the grazing bifurcation in higher dimensions can lead to emergence of a chaotic attractor. The saddle-node, described in \thmref{main2} for the smooth system \eqref{ztf}, is in the discontinuous case called a ``non-smooth fold'' in \cite{Bernardo08}. }
\end{remark}

\subsection{Overview}\seclab{overview}
\qss{
In \secref{model} we present an example where \thmref{main2} can be applied and provide some numerical comparisons.  We prove \thmref{main1} in \secref{proof1}. Given the blowup transformation defined by \eqref{cyl_blowup_map1}, the proof will rest upon another blowup of a nonhyperbolic point $\overline T$ -- being the imprint of the PWS visible fold -- which we describe in the chart $(\bar y=1)_1$. Then, upon undertaking a careful blowup analysis, see \secref{blowupT}, \secref{r1} and \secref{eps1}, we combine the findings in \secref{main1bc} and \secref{chini} to prove \thmref{main1} (b), (c) and (d), respectively. \thmref{main1} (a), being standard, is moved to the \appref{appA}.  \thmref{main2} is proven in \secref{proof2} using the implicit function theorem and the details of the local transition map described in \thmref{main1}(d). We conclude the paper in \secref{disc}. Here we discuss the assumptions, the regularization functions used, possible extensions to our work and compare our results with \cite{bonet-rev2016a}.  }
 \section{The friction oscillator}\seclab{model}
  Systems of the form \eqref{ztf} often appear in models of friction. Consider for example the system in \figref{model}(a), where a mass-spring system is on a moving belt. This produces the following equations
  \begin{align}
   \dot x &=y-\alpha,\eqlab{friction}\\
   \dot y &=-x - \mu(y,\phi(y\epsilon^{-1},\epsilon)),\nonumber
  \end{align}
where $\alpha>0$ is the belt speed, $x$ is the elongation of the spring and $y$ is the velocity relative to the belt, all in nondimensional form. Furthermore, $\mu$ is the friction force opposing the relative velocity, i.e. $\mu>0$ for $y>0$ and $\mu<0$ for $y<0$. Many different forms of $\mu$ exists, often PWS, but we will suppose that
%
\begin{align}
\mu(y,p) = \mu_+(y)p - \mu_+(-y)(1-p),\eqlab{muyp}
\end{align}
as  desired, such that $\mu$ is ``odd'' with respect to $(y,p)\mapsto (-y,1 -p)$. Here $\mu_+(y)$ is a smooth function having a minimum at $y=y_0>0$, see \figref{model}(b), such that 
\begin{align}
 \mu_+'(y_0)=0,\,\mu_+''(y_0)>0,\eqlab{y0}
\end{align}
and $\mu_+'(y)<0$ for all $y\in [0,y_0)$ while $\mu_+'(y)>0$ for all $y\in (y_0,\infty)$. The resulting shape of $\mu_+$ is shown in \figref{model}(b); the initial negative slope is known as the Stribeck effect of friction, see e.g. \cite{berger2002a}. 
In this way, we obtain the following associated PWS system
\begin{align}
 Z_+(x,y,\alpha) &= \begin{pmatrix}
                    y-\alpha\\
                    -x-\mu_+(y)
                   \end{pmatrix},\quad 
                   Z_-(x,y,\alpha) &= \begin{pmatrix}
                    y-\alpha\\
                    -x+\mu_+(-y)
                   \end{pmatrix}.\eqlab{ZNegHere}
\end{align} The system \eqref{muyp} with $p=\phi(y/\epsilon,\epsilon)$, $\phi$ satisfying (A1)-(A2), can viewed as a regularization of the PWS model $(Z_+,Z_-)$, given by \eqref{ZNegHere}, with the PWS friction law
\begin{align*}
 \mu(y) = \begin{cases}
  \mu_+(y) & \text{for $y>0$},\\
  -\mu_+(-y) & \text{for $y<0$}.
 \end{cases}
\end{align*}

Consider now $Z_+$. By \eqref{y0}, this system clearly has a Hopf bifurcation for $\alpha=y_0$ at $(x,y)=(-\mu_+(y_0),y_0)$. A straightforward calculation also shows that the Lyapunov coefficient is proportional to $\mu_+'''(y_0)$; the bifurcation being subcritical (supercritical) for $\mu_+'''(y_0)<0$ ($\mu_+'''(y_0)>0$, respectively). 
Suppose the former. Then for $y_0$ sufficiently small, it follows that the unstable Hopf limit cycles of $Z_+$ for $\epsilon=0$ intersect the switching manifold $y=0$ in the way described in (B1)-(B2) for some value of $\alpha=\alpha_*>y_0$ near $y_0$. The fact that $\alpha_*>y_0$ is due to the fact that the local Hopf limit cycles are repelling. Furthermore, the visible fold tangency with $y=0$ for $\alpha=\alpha_*$ occurs at the point $q:\,(x,y)=(-\mu_+(0),0)$. To verify (B3), notice by \eqref{ZNegHere} that $\dot y=2\mu_+(0)>0$ at $q$ from below. As a result, assuming (A1)-(A2), there exists saddle-node bifurcation of limit cycles near $\alpha=\alpha_*$ for $\epsilon\ll 1$, see \thmref{main2}. We collect the result in the following corollary.
\begin{cor}\corlab{model}
 Consider \eqref{friction} with $\mu$ of the form \eqref{muyp}, where there exists an $y_0>0$ such that \eqref{y0} holds and suppose that the regularization function $\phi$ satisfies (A1)-(A2). Suppose also that $\mu_+'''(y_0)<0$. Then for $y_0$ sufficiently small there exists an $\epsilon_0>0$ such that for every $\epsilon\in (0,\epsilon_0)$ the following holds:
 \begin{enumerate}
    \item There exists a subcritical Hopf bifurcation at $\alpha_H(\epsilon) = y_0+\mathcal O(\epsilon)$. 
  \item The unstable Hopf limit cycles undergo a locally unique saddle-node bifurcation at $\alpha_{SN}(\epsilon) = \alpha_*+\epsilon^{2k/(2k+1)}\alpha_2(\epsilon)>\alpha_H(\epsilon)$, $\alpha_*>y_0$ being the value for which a Hopf cycle of $Z_+$ grazes $\Sigma$. 
  \item For any $\alpha\in (\alpha_H(\epsilon),\alpha_{SN}(\epsilon))$ two (and, locally, only two) limit cycles exist: $\Gamma_{sl}(\alpha,\epsilon)$ and $\Gamma_{+}(\alpha,\epsilon)$, where:
  \begin{itemize}
  \item  $\Gamma_{sl}$ is hyperbolic and attracting.
  \item $\lim_{\epsilon\rightarrow 0}\Gamma_{sl}(\alpha,\epsilon)$ has a sliding segment.
  \item  $\Gamma_{+}$ is hyperbolic and repelling.
  \item $\lim_{\epsilon\rightarrow 0}\Gamma_{+}(\alpha,\epsilon)$ is a limit cycle of $Z_+$ contained within $y>0$. 
   \end{itemize}
   No limit cycles exist near $(x,y)=(-\mu_+(y_0),y_0)$ for $\alpha>\alpha_{SN}(\epsilon)$. 
   \item Let $\Gamma_{SN}(\epsilon)$ be the saddle-node periodic orbit for $\alpha=\alpha_{SN}(\epsilon)$. Then $\Gamma_{SN}(\epsilon)$ converges in Hausdorff-distance to the repelling limit cycle of $Z_+$ which grazes $y=0$ for $\alpha=\alpha_*$.
 \end{enumerate}
%
 \end{cor}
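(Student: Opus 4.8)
The plan is to verify that the friction oscillator \eqref{friction}, with $Z_\pm$ as in \eqref{ZNegHere}, satisfies all the hypotheses of \thmref{main2} (supplemented by a classical Hopf analysis for item (1)), and then to read off (1)--(4) from \thmref{main1}, \thmref{main2} and \lemmaref{Rmap}. Assumption (A0) is immediate from \eqref{muyp}, since $p\mapsto\mu(y,p)$ is affine and hence so is $p\mapsto Z(z,p,\alpha)$, with $Z_\pm$ given by \eqref{ZNegHere}; (A1)--(A2) are hypotheses on $\phi$. The remaining structural input is the local normal form \eqref{ZpmNF} at the fold, which will follow from \cite[Proposition 14]{bonet-rev2016a} once the fold has been identified.

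For item (1) I first locate the equilibrium of $Z_+$ at $(x,y)=(-\mu_+(\alpha),\alpha)$, whose linearization has trace $-\mu_+'(\alpha)$ and determinant $1$. By \eqref{y0} the trace vanishes at $\alpha=y_0$ with the eigenvalues crossing the imaginary axis transversally (crossing speed $-\tfrac12\mu_+''(y_0)\ne 0$), so $Z_+$ undergoes a Hopf bifurcation there; a standard normal-form computation gives a first Lyapunov coefficient proportional to $\mu_+'''(y_0)$, hence subcritical when $\mu_+'''(y_0)<0$. Since this equilibrium sits at height $y=\alpha$ near $y_0>0$, i.e. uniformly away from $\Sigma$, \lemmaref{regular} shows that the full system \eqref{friction} is a regular $\mathcal O(\epsilon^k)$ perturbation of $Z_+$ in a neighborhood of it; the Hopf point and its criticality therefore persist, yielding a subcritical Hopf at $\alpha_H(\epsilon)=y_0+\mathcal O(\epsilon)$.

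For item (2) I verify (B1)--(B3). Evaluating the Lie derivatives of $h(x,y)=y$ along \eqref{ZNegHere} at $q=(-\mu_+(0),0)$ gives $Z_+h(q)=0$, $Z_+(Z_+h)(q)=\alpha>0$ and $Z_-h(q)=2\mu_+(0)>0$, so $q$ is a visible quadratic fold of $Z_+$ with $Z_-$ transverse; this is precisely (B3) and it delivers the normal form \eqref{ZpmNF} via \cite[Proposition 14]{bonet-rev2016a}. The heart of the matter is (B1)--(B2): one must show that the repelling cycle born at the subcritical Hopf grows, as $\alpha$ increases past $y_0$, until its minimal height reaches $\Sigma$ at some $\alpha=\alpha_*>y_0$, and that this grazing is quadratic with $Y(\alpha)=\min_t y(t)$ crossing $0$ transversally, $Y'(\alpha_*)\ne 0$. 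For $y_0$ small this can be controlled by a rescaling (equivalently, a leading-order averaging of the energy $\tfrac12(x+\mu_+(\alpha))^2+\tfrac12(y-\alpha)^2$ along the orbit), which pins $\alpha_*$ close to $y_0$ and shows that $\Gamma_{\alpha_*}$ stays in $y\ge 0$ with a genuine quadratic touch. With (B1)--(B3) in force (after the standard shift placing the fold at the origin), \thmref{main2} produces the locally unique saddle-node at $\alpha_{SN}(\epsilon)=\alpha_*+\epsilon^{2k/(2k+1)}\alpha_2(\epsilon)$, cf. \eqref{alphaEpsSN}; moreover $\alpha_{SN}(\epsilon)>\alpha_H(\epsilon)$ for all small $\epsilon$, since the $\epsilon$-independent gap $\alpha_*-y_0>0$ dominates the $\mathcal O(\epsilon^{2k/(2k+1)})$ and $\mathcal O(\epsilon)$ corrections.

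Items (3) and (4) are then harvested from the structure of the return map $P=R\circ Q$. For $\alpha<\alpha_{SN}(\epsilon)$ there are exactly two fixed points of $P$, lying on the two branches identified in \thmref{main1}(d). On the sliding branch $Q$ is an exponential contraction (item (d)(i)), so $|P'_x|=|R'_x\,Q'_x|$ is small and the corresponding cycle $\Gamma_{sl}$ is hyperbolic attracting, its $\epsilon\to 0$ limit containing a sliding segment by \remref{pwsmap}(i); on the branch where $|Q'_x|\in(1-c,1+c)$ (item (d)(iii)), combining with $|R'_x|>1+\omega$ from \eqref{Rprop2} gives $|P'_x|>1$, so $\Gamma_{+}$ is hyperbolic repelling with limit a clean cycle of $Z_+$ in $y>0$. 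The local uniqueness in \thmref{main2} excludes further cycles for $\alpha>\alpha_{SN}(\epsilon)$, and the Hausdorff convergence of $\Gamma_{SN}(\epsilon)$ to the grazing $Z_+$-cycle is exactly the final assertion of \thmref{main2}. The main obstacle is (B1)--(B2): unlike (A0), (B3) and the normal form, these demand global control of the nonlinear repelling-cycle family of $Z_+$ all the way to its quadratic grazing with $\Sigma$ (including the sign/transversality of $Y'$ there), which is precisely where the smallness of $y_0$ must be exploited.
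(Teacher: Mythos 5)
Your proposal is correct and follows essentially the same route as the paper: the paper's proof of \corref{model} consists precisely of the analysis preceding it (the Hopf computation for $Z_+$ with Lyapunov coefficient proportional to $\mu_+'''(y_0)$, the grazing of the repelling Hopf cycles at some $\alpha_*>y_0$ for $y_0$ small, and the verification of (B3) via $\dot y=2\mu_+(0)>0$ at the fold) followed by an appeal to \thmref{main1}, \thmref{main2} and \remref{pwsmap} for items (2)--(4). You are in fact slightly more explicit than the paper on two points it leaves implicit -- transferring the Hopf bifurcation from $Z_+$ to the full $\epsilon>0$ system via \lemmaref{regular}, and identifying (B1)--(B2) (the global growth of the repelling cycle family to a transversal quadratic grazing for $y_0$ small) as the only hypothesis not verified by a local computation -- but your treatment of the latter is at the same level of detail as the paper's own assertion.
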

\begin{proof}
(1) and (2) follows from the analysis preceding the corollary. (3) and (4) are also consequences of the proof of \thmref{main2}, recall also \remref{pwsmap}. 
\end{proof}
\qs{
\begin{remark}\remlab{frictionrem}
\corref{model} applies to all friction models of the type shown in \figref{model}(b), satisfying $\mu_+'''(y_0)<0$, and to the general regularization functions satisfying (A1)-(A2). This shows that the saddle-node bifurcation in the friction oscillator problem is a very ``robust phenomena''. In fact, the details that depend upon the regularization function (like $k$) are ``microscopic'' (i.e. hidden in remainder terms $o(1)$). We discuss the friction oscillator problem further in the final paragraph of \secref{disc}.
\end{remark}}

It is known from experiments that subcritical Hopf bifurcations do occur for certain friction characteristics, see e.g. \cite{heslot1994a}. Explicitly, they occur for the model
\begin{align}
 \mu_+(y) = \mu_m + (\mu_s-\mu_m) e^{-\rho y}+cy,\eqlab{muModel}
\end{align}
proposed in \cite{berger2002a} and also studied in \cite{won2016a,papangelo2017a}, which we will now use in numerical computations. For \eqref{muModel}, $\mu_+(0)=\mu_s$ and $\mu_s>\mu_m>0$, $\rho>0$, and $c\in (0,\rho (\mu_s-\mu_m))$ for the Stribeck effect and the existence of $y_0$ to be present. In fact, for \eqref{muModel}, 
\begin{align*}
 y_0 &= -\rho^{-1} \log \left(c/(\rho (\mu_s-\mu_m))\right),\quad \mu_+''(y_0)=\rho c>0,\quad  \mu_+'''(y_0)=-\rho^2 c<0.
\end{align*}

In \figref{auto}, we illustrate numerical results, obtained using AUTO, for \eqref{muModel} with the following parameters:
\begin{align*}
 \mu_s= 1,\,\mu_m = 0.5,\,\rho=4,\,c=0.85,
\end{align*}
such that $y_0\approx 0.33$. In \figref{auto}, we have also used the regularization function \eqref{phiexample}
for which $k=k_\pm=2$ in (A2), and varied the small parameter $\epsilon$. In \figref{auto}(a), for example, a bifurcation diagram is shown using $\text{min}\,y$ as a measure of the amplitude, with $\epsilon$ varying along the different branches, highlighted in different colours. The Hopf bifurcation occurs at $\alpha\approx y_0$ with $\min\,y$ decreasing from around that same value (not visible in the zoomed version of the diagram in (a)). However, along each branch, a saddle-node bifurcation is visible. In black dashed lines is the unperturbed bifurcation diagram for $Z_+$. Numerically, we therefore see that the saddle-node bifurcation approaches the singular limit, in agreement with \thmref{main2}. See further details in the figure caption. In \figref{auto}(d), we show the value of $\alpha_*-\alpha$ along the saddle-node bifurcation for varying values of $\epsilon$ using a loglog-scale. Here $\alpha_*\approx 0.4$ is the unperturbed value of the bifurcation, where the limit cycle of $Z_+$ grazes the discontinuity set. The slope of the curve is almost constant; using least square we obtain a slope $\approx 0.8024$ which is also in agreement with \thmref{main2} for $k=2$; notice $\epsilon^{2k/(2k+1)} = \epsilon^{4/5}=\epsilon^{0.8}$ for this value of $k$. 

In \figref{auto}(c), the nonhyperbolic saddle-node periodic orbits are shown for different values of $\epsilon$. The dashed black curve (barely visible, but it has the largest amplitude) shows the grazing limit cycle for $Z_+$ at $\alpha=\alpha_*\approx 0.4$. Finally, \figref{auto}(d) shows two co-existing limit cycles for $\alpha=0.38$ and $\epsilon=5\times 10^{-4}$ in red. For comparison, the bifurcating limit cycle at this $\epsilon$-value and $\alpha=0.398$ is shown using a red dashed line. 

\begin{figure}
\begin{center}
\subfigure[]{\includegraphics[width=.49\textwidth]{./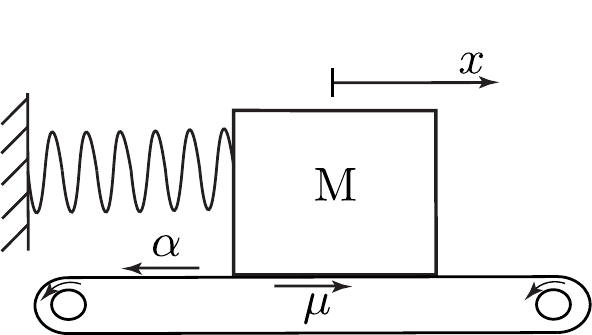}}
\subfigure[]{\includegraphics[width=.49\textwidth]{./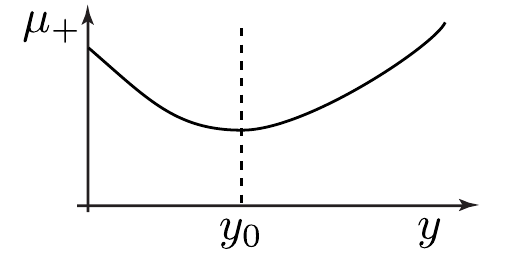}}
 \end{center}
 \caption{In (a): The mass-spring system on a moving belt. In (b): A Stribeck friction law with a minimum at $y=y_0$. }
  \figlab{model}
              \end{figure}
              
              \begin{figure}
\begin{center}
\subfigure[]{\includegraphics[width=.49\textwidth]{./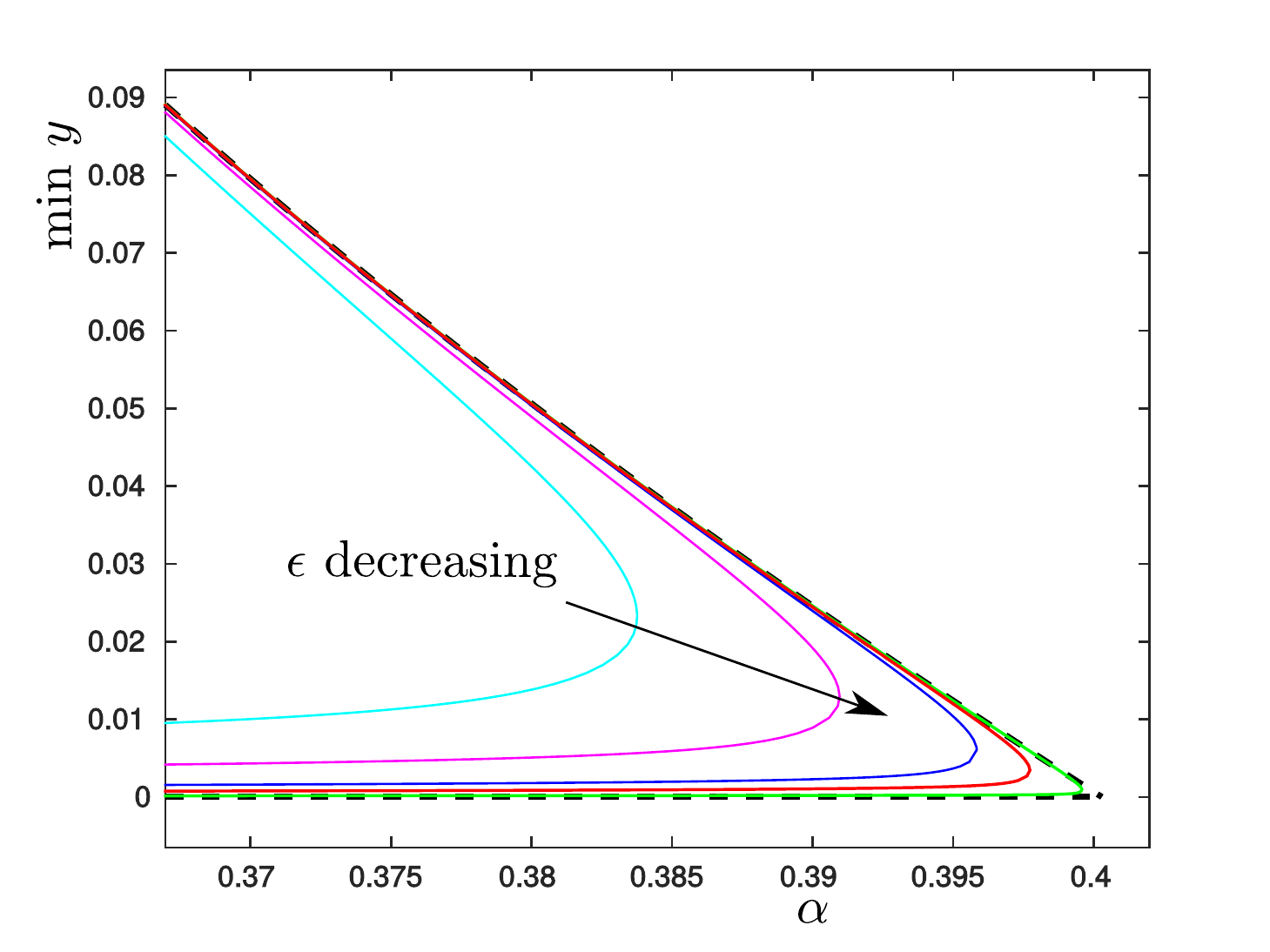}}
\subfigure[]{\includegraphics[width=.49\textwidth]{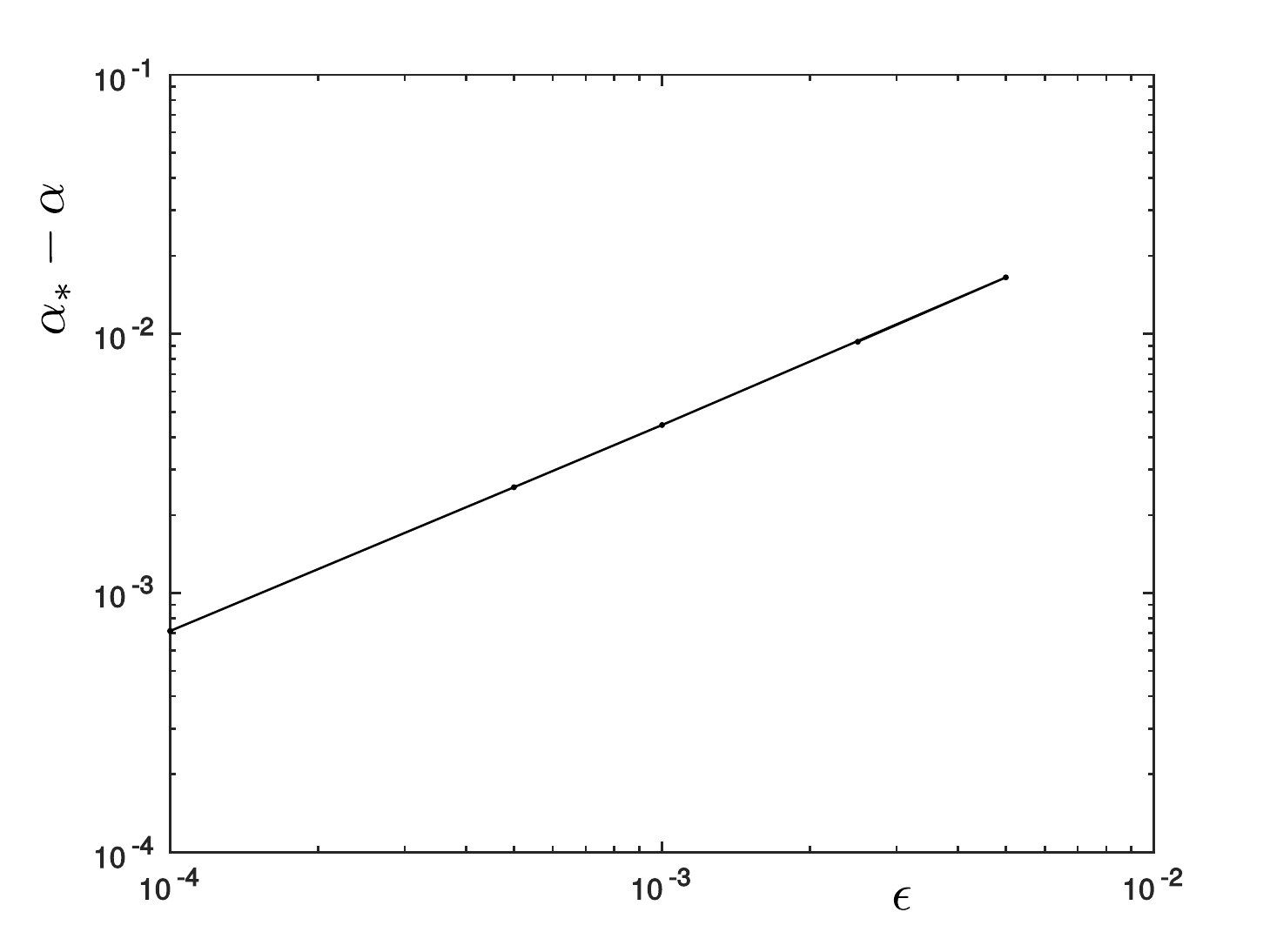}}
\subfigure[]{\includegraphics[width=.49\textwidth]{./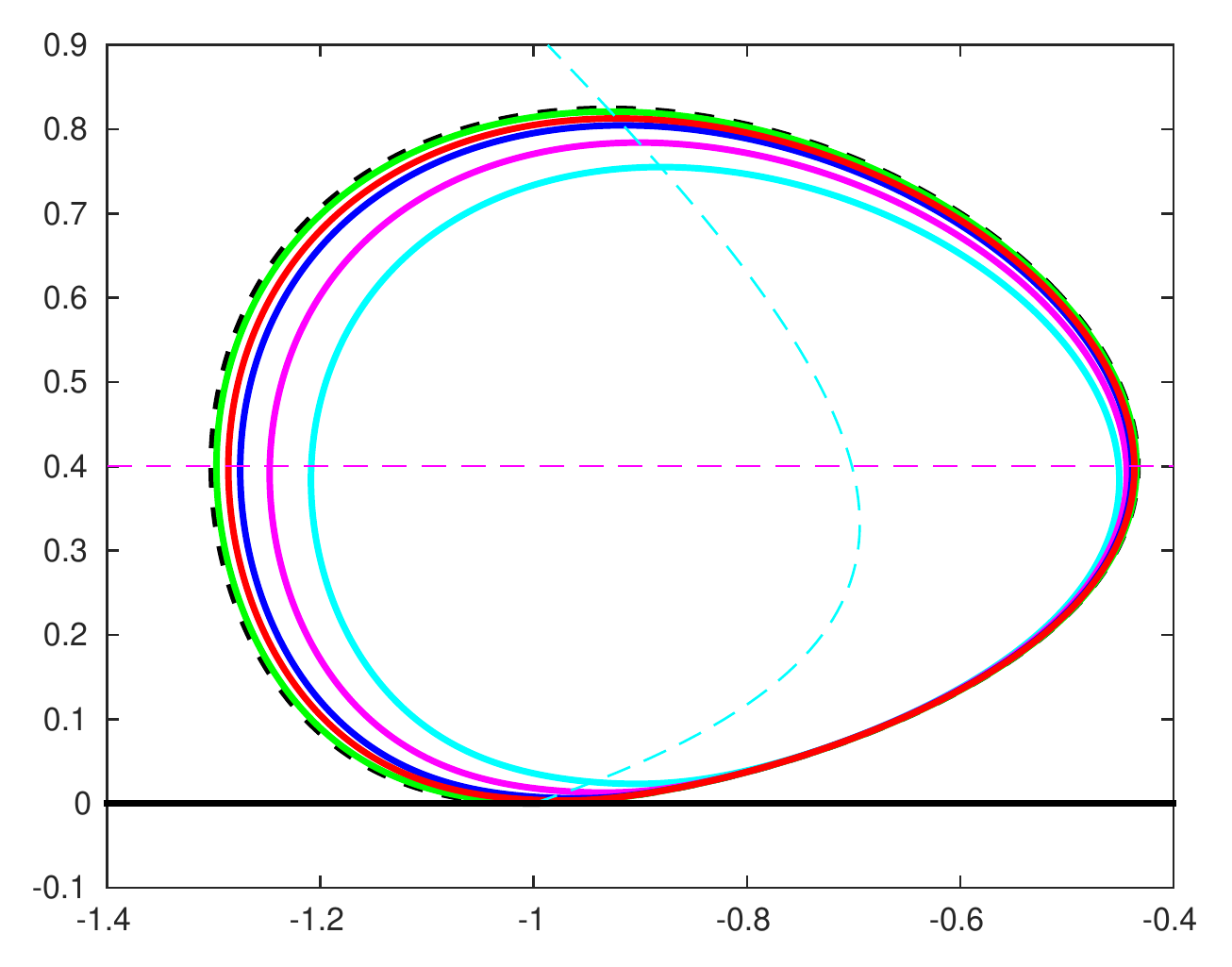}}
\subfigure[]{\includegraphics[width=.49\textwidth]{./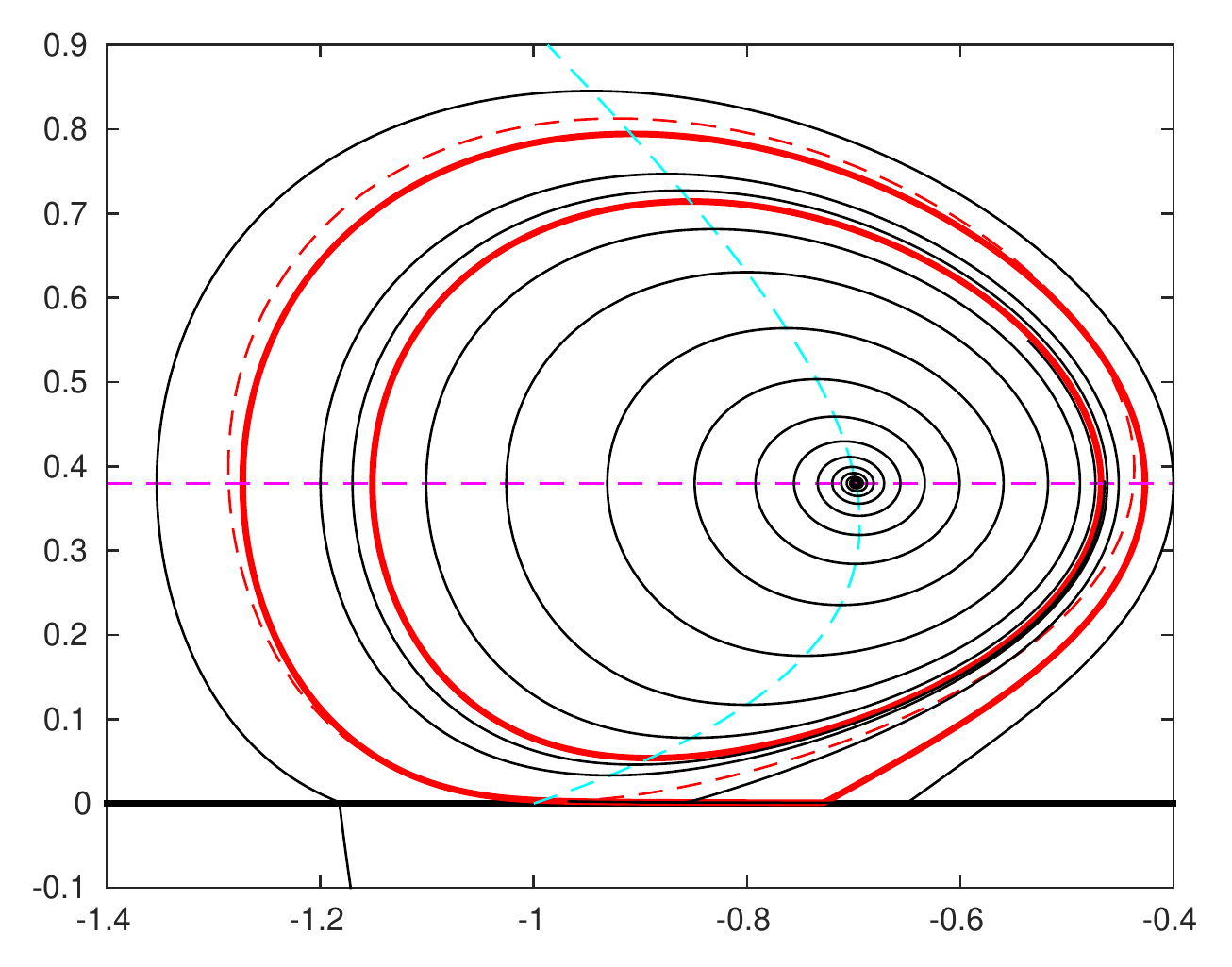}}
 \end{center}
 \caption{In (a): Bifurcation diagram of limit cycles using $\text{min}\,y$ as a measure of the amplitude for varying values of $\epsilon$: in cyan: $\epsilon = 5\times 10^{-3}$, magenta: $\epsilon=2.5\times 10^{-3}$, blue: $\epsilon=10^{-3}$, red: $\epsilon=5\times 10^{-4}$, and finally in green: $\epsilon=10^{-4}$.  In (b): $\alpha_*-\alpha$ along the saddle-node bifurcation for varying values of $\epsilon$. The slope is nearly constant $\approx 0.8024$,  in good agreement with the theoretical value of $4/5$ obtained from \thmref{main2} with $k=2$. In (c): the saddle-node periodic orbits. The dashed magenta and cyan curves are nullclines for $Z_+$ at the unperturbed bifurcation parameter $\alpha=0.4$. The colours are identical to (a). In (d), for $\epsilon=5\times 10^{-3}$, two limit cycles are shown for $\alpha=0.38$. The inner most is repelling while the other one, having a segment near the sliding region, is stable. The black curves are transients while the dashed magenta and cyan curves are nullclines as in (b). For comparison, the saddle-node periodic orbit (red and dashed) is shown for the same $\epsilon$-value.}
  \figlab{auto}
              \end{figure}

\section{Proof of \thmref{main1}}\seclab{proof1}

\qs{In this section, we will prove \thmref{main1}. First, we work with the blowup \eqref{cyl_blowup_map1}. 
The analysis of this blowup system is standard and the details can be found in different formulations, also for more general systems. See e.g. \cite{Buzzi06,Llibre07,kristiansen2018a}. We therefore delay the details to \appref{appA} and instead just summarise the findings (see also \figref{visible1} for an illustration): Using the chart $(\bar \epsilon=1)_2$, recall \eqref{r2y2}, we find a critical manifold $\overline S$ on the cylinder as a graph over $\Sigma_{sl}$. It is noncompact in the scaling chart $(\bar \epsilon=1)_2$, but using $(\bar y=1)_1$ we find that it ends on the edge $\bar y=1$ (yellow  in \figref{visible1}) in a nonhyperbolic point $\overline T:\,x=0,(\bar y,\bar \epsilon)=(1,0)$. This point (in brown) is the imprint of the tangency $T$ (also in brown on the blown down picture on the left) on the blown-up system. Away from $x=0$ the edge $\bar y=1$ is hyperbolic, whereas $\bar y=-1$ (purple in \figref{visible1}) is hyperbolic for all $x$. The latter property follows from working in $(\bar y=-1)_3$. Next, by working in $(\bar \epsilon=1)_2$, we obtain the invariant manifold $S_\epsilon$ using Fenichel's theory \cite{fen3} upon restricting $\overline S$ to the compact set $x\in J$. The invariant foliation in \thmref{main1} (a) is also a consequence of Fenichel's theory. However, Fenichel's foliation is only local to $\overline S$ on the cylinder. To extend it beyond the cylinder into $y\ne 0$ uniformly in $\epsilon$ we work near the hyperbolic lines $(\bar y,\bar\epsilon) = (\pm 1,0)$, $x<0$ in the charts $(\bar y=\pm 1)_{1,3}$, respectively. See further details in \appref{appA}. Combining the information proves \thmref{main1} (a).}
\begin{remark}
In \figref{visible1} and the figures that follow we indicate hyperbolic directions by tripple-headed arrows, whereas slow and center directions are indicated by single-headed arrows.
\end{remark}
\qs{
To prove the remaining claims of the theorem, we work in chart $(\bar y=1)_1$ with the coordinates $(r_1,x,\epsilon_1)$ and the local blowup \eqref{r1epsilon1}. In these coordinates, we then blowup the nonhyperbolic point $\overline T$ to a sphere. Using three directional charts, we describe the dynamics on this sphere, see details in \secref{r1} and \secref{eps1}. This analysis is the basis of the subsequent proof of \thmref{main1}(b),(c) and (d), see \secref{main1bc} and \secref{chini}, respectively.
}
\begin{figure}
\begin{center}
\includegraphics[width=.95\textwidth]{./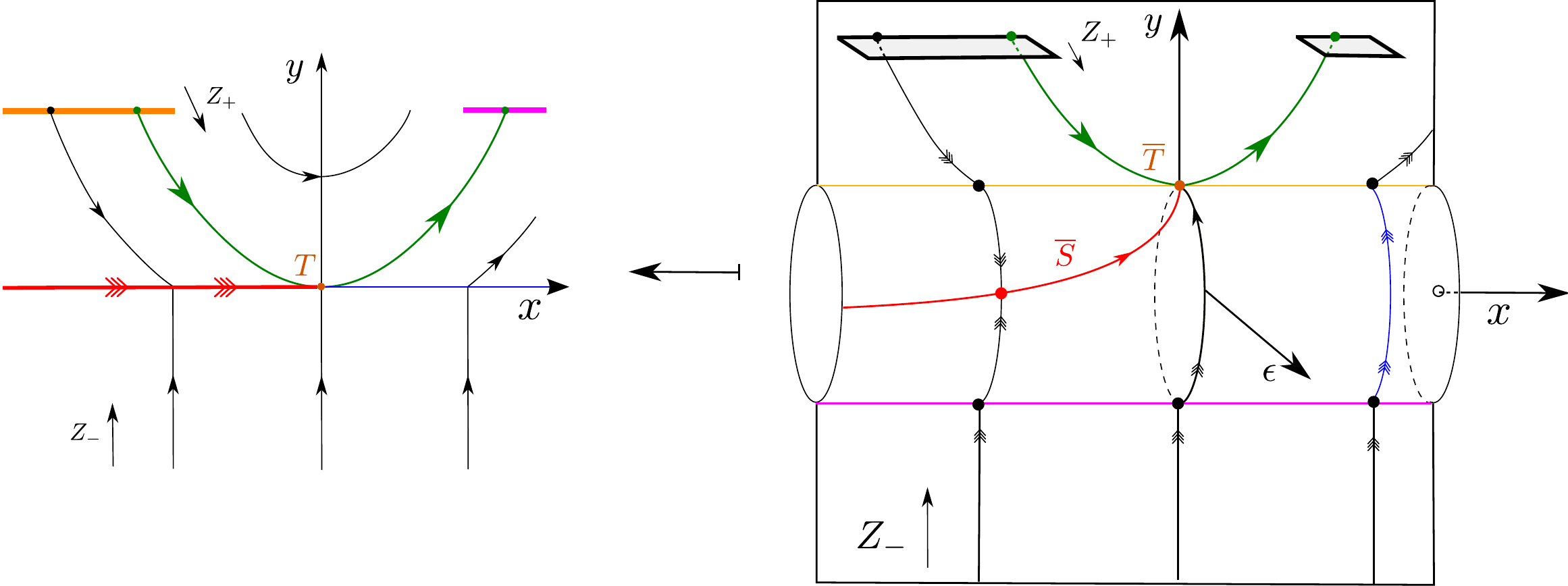}
 \end{center}
\caption{Illustration of the cylindrical blowup of the visible fold. The blown down version is on the left whereas the blowup picture on the right. Our viewpoint is from $\epsilon>0$, this axis coming out of the diagram. \qs{Since $\epsilon\ge 0$ only the part of the cylinder with $\bar \epsilon\ge 0$ is relevant.} Through desingularization we smoothness and hyperbolicity along the edges $\bar y=\pm 1$ (yellow and purple), except at the point $\overline T$ at $x=0$ which is fully nonhyperbolic. On the side of the cylinder, which can be described in the scaling chart $(\bar \epsilon=1)_2$, we find a normally hyperbolic critical manifold $\overline S$. By working in $(\bar y=1)_1$, we realise that it ends in the fully nonhyperbolic point $\overline T$. Here $\overline S$ is tangent to a nonhyperbolic critical fiber at $x=0$.  }
 \figlab{visible1}
              \end{figure}
\subsection{Blowup of the nonhyperbolic point $\overline T$}\seclab{blowupT}
In $(\bar y=1)_1$ we obtain the following equations:
\begin{align}
\dot r &=r F(r,x,\epsilon),\eqlab{y1Eqs}\\
\dot x &= r (1-\epsilon^k \phi_+(r,\epsilon))(1+f(x,r)),\nonumber\\
 \dot \epsilon &=-\epsilon F(r,x,\epsilon),\nonumber
\end{align}
by inserting \eqref{r1epsilon1} into \eqref{zext} with $Z_\pm$ given by \eqref{ZpmNF} and dividing the resulting right hand side by the common factor $\epsilon_1$ (as promised in our description of the blowup approach, see \secref{mainResults}).
For simplicity, we have also dropped the subscripts on $r_1$ and $\epsilon_1$ in \eqref{y1Eqs}.  Furthermore, in \eqref{y1Eqs},
\begin{align*}
 F(r,x,\epsilon) =(1-\epsilon^k\phi_+(r,\epsilon))(2x+r g(x,r))+\epsilon^k \phi_+(r,\epsilon),
\end{align*}
 where we have used (A2) and set $k_+=k$. This system is described in further details in \appref{appA} (for $x<0$).

 Clearly, $(r,x,\epsilon)=(0,0,0)$, corresponding to $\overline T$, is fully nonhyperbolic for \eqref{y1Eqs}, the linearization having only zero eigenvalues. 
Therefore we blowup this nonhyperbolic point by a $k$-dependent blowup transformation $\Psi$ defined by:
\begin{align}
 \rho\ge 0,\,(\bar r,\bar x,\bar \epsilon)\in S^2\mapsto \begin{cases}r=\rho^{2k}\bar r,\\
 x=\rho^k \bar x,\\
 \epsilon = \rho \bar \epsilon. 
 \end{cases}\eqlab{blowupSphere}
\end{align}
\qss{
Let $X$ denote the right hand side in \eqref{y1Eqs}. Then the exponents (or weights) $2k$, $k$, and $1$ on $\rho$ in the expressions in \eqref{blowupSphere} are so that the vector-field $\overline X=\Psi_* X$ on $(\rho,(\bar r,\bar x,\bar \epsilon))\in [0,\rho_0)\times S^2$, for $\rho_0>0$ sufficiently small, has $\rho^k$ as a common factor. We therefore desingularize by dividing out this common factor and study the vector-field
$\widehat X:=\rho^{-k} \overline X$, being topologically equivalent to $\overline X$ on $\rho>0$, instead. However, since $\widehat X\neq 0$ for $\rho=0$ it will have improved hyperbolicity properties. This is the general idea of blowup, see e.g. \cite{dumortier_1996}. }
\begin{remark}
 Notice that the weights in the expressions for $x$ and $\epsilon$ in \eqref{blowupSphere} are so that on the cylinder $\{r=0\}$, the $k$th-order tangency between the critical manifold, of the form $x=\epsilon_1^k m(\epsilon_1)$ in chart $(\bar y=1)_1$, and the nonhyperbolic critical fiber, at $x=\epsilon_1=0$, gets geometrically separated on the blowup sphere. Similarly, the weights on $x$ and $y=r$ are so that the quadratic tangency within $\{\epsilon=0\}$, due to the visible fold, also gets separated.
\end{remark}

We will use three local charts, obtained by setting $\bar r=1$, $\bar \epsilon=1$ and $\bar x=-1$, to describe this blowup:
\begin{align}
 (\bar r=1)_1:\,\rho_1\ge 0,\,x_1\in \mathbb R,\,\epsilon_1\ge 0\,&\mapsto \left\{\begin{array}{ccc}
                                                                    r &=& \rho_1^{2k},\\
                                                                    x &=& \rho_1^k x_1,\\
                                                                    \epsilon &=&\rho_1 \epsilon_1,
                                                                   \end{array} \right.\eqlab{epsilon11}\\
                                                                    (\bar \epsilon=1)_2:\, \rho_2\ge 0,\,r_2\ge 0,\,x_2\in \mathbb R\,&\mapsto \left\{\begin{array}{ccc}
                                                                    r &=& \rho_2^{2k}r_2,\\
                                                                    x &=& \rho_2^k x_2,\\
                                                                    \epsilon &=&\rho_2,
                                                                   \end{array} \right. \eqlab{epsilon12}\\
                                                                    (\bar x=-1)_3:\,\rho_3\ge 0,\,r_3\ge 0,\,\epsilon_3\ge 0\,&\mapsto \left\{\begin{array}{ccc}
                                                                    r &=& \rho_3^{2k}r_3,\\
                                                                    x &=& -\rho_3^k,\\
                                                                    \epsilon &=&\rho_3\epsilon_3,
                                                                   \end{array} \right.\nonumber
\end{align}
focusing primarily on the two former charts.
As indicated, these charts are enumerated as $(\bar r=1)_1,(\bar \epsilon=1)_2$ and $(\bar x=-1)_3$, respectively. \qs{The coordinate changes between the charts follow from the expressions:
\begin{align*}
 \begin{cases}
  \rho_2 &=\rho_1\epsilon_1,\, \quad \quad \quad r_2= \epsilon_1^{-2k},\,\quad \,\,\,x_2 = \epsilon_1^{-k}x_1,\quad \quad \quad \,\,\,\,\,\text{for $\epsilon_1>0$},\\
  \rho_3 &=\rho_1(-x_1)^{1/k},\, r_2= (-x_1)^{-2},\,\epsilon_3 =(-x_1)^{-1/k}\epsilon_1,\quad \text{for $x_1<0$}.
 \end{cases}
\end{align*}
We will frequently use these coordinate changes without further reference. }
We illustrate the blowup in \figref{visible2}, using a similar viewpoint as in \figref{visible1}. We analyse each of the charts in the following. \qs{In \secref{main1bc}, we combine the results into a proof of \thmref{main1} (b) and (c).}
\begin{figure}
\begin{center}
\includegraphics[width=.99\textwidth]{./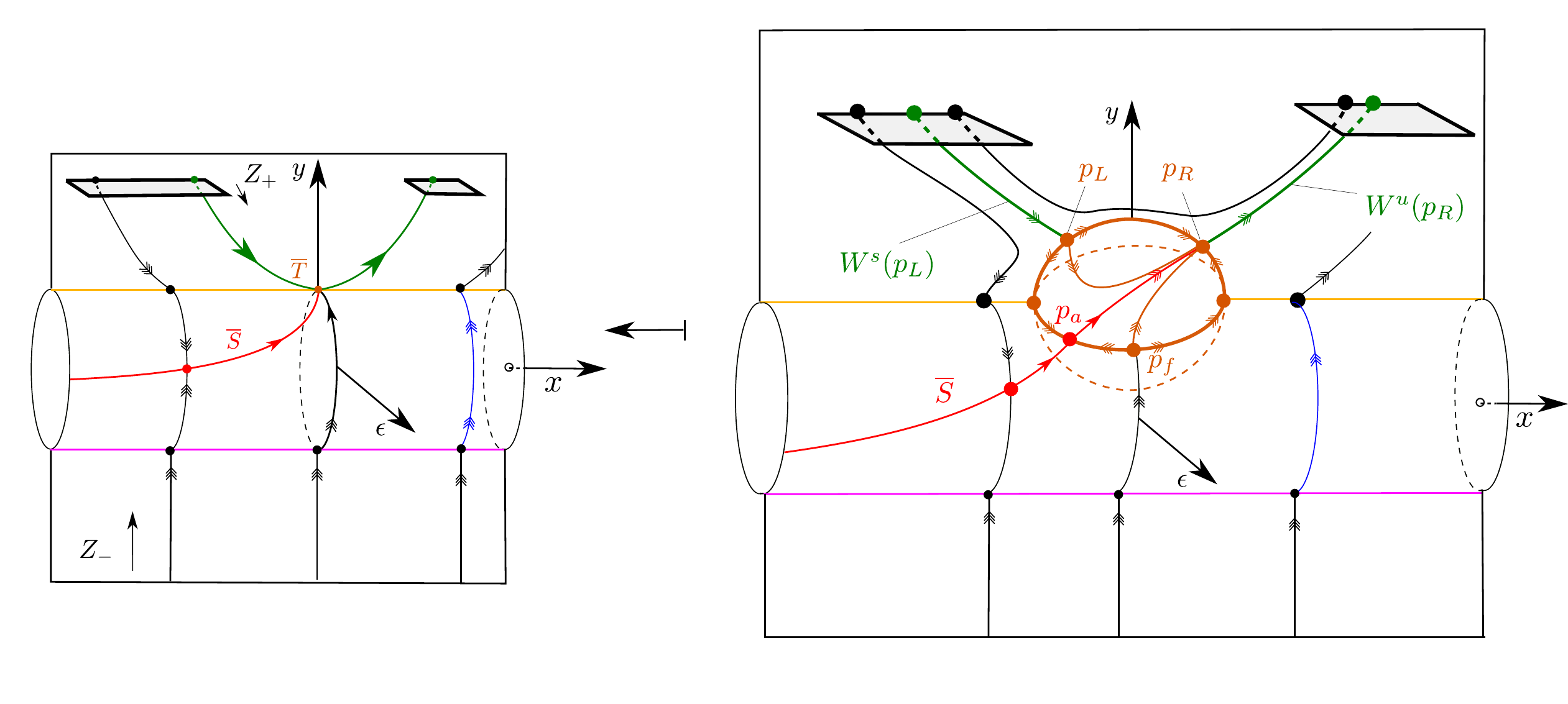} \end{center}
 \caption{Illustration of the subsequent blowup \eqref{blowupSphere} of the nonhyperbolic point $(r_1,x_1,\epsilon_1)=(0,0,0)$, corresponding to $\overline T$, in the chart $(\bar y=1)_1$.  The weights in \eqref{blowupSphere} are so that the critical manifold and the nonhyperbolic fiber gets separated into two poins $p_a$ and $p_f$ on the sphere (in brown in the blowup picture on the right). By desingularization, these points have improved hyperbolicity properties, as indicated by the tripple-headed arrows. Similarly, the blowup also separates the quadratic tangency within $\epsilon=0$ into two points $p_L$ and $p_R$ on the sphere, which also have improved hyperbolicity properties after desingularization (in fact both are fully hyperbolic). See \lemmaref{pLpR} and \lemmaref{M1} for further details. }
\figlab{visible2}
              \end{figure}
              
\subsection{Chart $(\bar r=1)_1$}\seclab{r1}
In this chart, by inserting \eqref{epsilon11} into \eqref{y1Eqs}, we obtain the following equations:
\begin{align}
 \dot \rho_1 &=\frac{1}{2k}\rho_1 F_1(\rho_1,x_1,\epsilon_1),\eqlab{r1Eqns}\\
 \dot x_1 &=  (1-\rho_1^k\epsilon_1^k \phi_+(\rho_1^{2k},\rho_1 \epsilon_1))(1+\rho_1^k f_1(\rho_1^k,x_1))-\frac12 F_1(\rho_1,x_1,\epsilon_1)x_1,\nonumber\\
 \dot \epsilon_1 &= -\frac{2k+1}{2k}F_1(\rho_1,x_1,\epsilon_1),\nonumber
 \end{align}
where 
\begin{align*}
 F_1(\rho_1,x_1,\epsilon_1)=(1-\rho_1^k \epsilon_1^k\phi_+(\rho_1^{2k},\rho_1 \epsilon_1))(2x_1+\rho_1^k g_1(\rho_1^k,x_1))+\epsilon_1^k \phi_+(\rho_1^{2k},\rho_1 \epsilon_1),
\end{align*}
and
\begin{align*}
f_1(\rho_1^k,x_1) &= \rho_1^{-k} f(\rho_1^k x_1,\rho_1^{2k}),\quad g_1(\rho_1^k,x_1) = g(\rho^{k}x_1,\rho_1^{2k}).
\end{align*}
Notice that $f_1$ is well-defined and smooth since $f(0,0)=0$. By \eqref{r1epsilon1} and \eqref{epsilon11}, we have 
\begin{align}
y=\rho_1^{2k},\eqlab{yRho1}
\end{align}
in this chart. Also, $\rho_1=\epsilon_1=0$ is invariant. Along this axis we have
\begin{align*}
 \dot x_1 &=1-x_1^2.
\end{align*}
Therefore $x_1=\mp 1$ are equilibria of this reduced system, $x_1=-1$ being hyperbolic and repelling, $x_1=1$ being hyperbolic and attracting. \qs{They correspond to the intersection of $\gamma$ with the blowup sphere, see \figref{visible2}. }
\begin{lemma}\lemmalab{pLpR}
 The points $p_{L},p_R:\,(\rho_1,x_1,\epsilon_1)=(0,\mp 1,0)$, respectively, are hyperbolic. In particular, the eigenvalues of $p_{L}$ and $p_R$ are as follows:
 \begin{align*}
  \text{for $p_L$}&:\,\lambda_1=-\frac{1}{k},\,\lambda_2 = 2,\,\lambda_3 = 2+\frac{1}{k},\\
  \text{for $p_R$}&:\,\lambda_1=-2-\frac{1}{k},\,\lambda_2 = -2,\,\lambda_3 = \frac{1}{k}.
 \end{align*}
Moreover, the $2$-dimensional $W^u_{loc}(p_L)$ is a neighborhood of $x_1=-1$, $\epsilon_1=0$ within $\rho_1=0$ whereas the $1$-dimensional $W^{s}(p_L)$ -- corresponding to $\gamma$ for $x<0$, $\epsilon=0$ upon blowing down -- is tangent to the vector $(1,0,0)$. On the other hand, the $2$-dimensional $W^s_{loc}(p_R)$ is a full neighborhood of $x_1=1$, $\epsilon_1=0$ within $\rho_1=0$ whereas the $1$-dimensional $W^u(p_R)$ -- corresponding to $\gamma$ for $x>0$, $\epsilon=0$ upon blowing down -- is tangent to the vector $(1,0,0)$.  
\end{lemma}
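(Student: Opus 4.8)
The plan is to linearize the chart equations \eqref{r1Eqns} at $p_L$ and $p_R$ and to exploit one structural feature that makes the whole computation transparent: the $\dot\rho_1$- and $\dot\epsilon_1$-equations carry the \emph{explicit} prefactors $\rho_1$ and $\epsilon_1$, respectively, so on the invariant line $\{\rho_1=\epsilon_1=0\}$ their linearizations are forced to be diagonal. First I would record the reduced flow along that line. There every $\phi_+$-term drops out (each carries an $\epsilon_1^k$ factor) and $F_1=2x_1$, so $\dot x_1=1-x_1^2$, which confirms that the only equilibria are $p_L$ at $x_1=-1$ and $p_R$ at $x_1=+1$. Smoothness of \eqref{r1Eqns}, needed before any linearization, is guaranteed by $f(0,0)=0$, which makes $f_1(\rho_1^k,x_1)=\rho_1^{-k}f(\rho_1^kx_1,\rho_1^{2k})$ genuinely smooth, as already noted after \eqref{r1Eqns}.

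Next I would compute the $3\times3$ Jacobian at a general point $(0,x_1,0)$ of the line. Since $\dot\rho_1=\tfrac{1}{2k}\rho_1F_1$ vanishes with $\rho_1$, its $x_1$- and $\epsilon_1$-derivatives vanish on the line while $\partial_{\rho_1}\dot\rho_1=\tfrac{1}{2k}F_1=x_1/k$; symmetrically $\partial_{\rho_1}\dot\epsilon_1=\partial_{x_1}\dot\epsilon_1=0$ and $\partial_{\epsilon_1}\dot\epsilon_1=-\tfrac{2k+1}{2k}F_1=-(2k+1)x_1/k$. The middle equation gives $\partial_{x_1}\dot x_1=-2x_1$ (using $\partial_{x_1}F_1=2$ on the line), and its $\rho_1$- and $\epsilon_1$-derivatives enter only as off-diagonal entries of the second row. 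The Jacobian is therefore triangular with diagonal $(x_1/k,\,-2x_1,\,-(2k+1)x_1/k)$, so its characteristic polynomial factors and these diagonal entries \emph{are} the eigenvalues. Substituting $x_1=\mp1$ reproduces the two eigenvalue triples in the statement; in particular none vanishes, so both points are hyperbolic.

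Finally I would identify the invariant manifolds using the fact that the coordinate planes $\{\rho_1=0\}$ and $\{\epsilon_1=0\}$ are invariant by inspection of \eqref{r1Eqns}. At $p_L$ the two eigenvalues transverse to $\{\rho_1=0\}$, namely $2$ and $2+1/k$, are positive, so $p_L$ is a source inside that plane and $W^u_{\mathrm{loc}}(p_L)$ is a full neighborhood of $(x_1,\epsilon_1)=(-1,0)$ there; the remaining eigenvalue $-1/k$ has an eigenvector with zero $\epsilon_1$-component, so $W^s(p_L)$ is one-dimensional, lies in $\{\epsilon_1=0\}$, and is tangent to the $\rho_1$-axis $(1,0,0)$ (the $x_1$-component of the eigenvector is governed by $\partial_{\rho_1}\dot x_1$ on the line, which vanishes for $k\ge2$; in general the eigenvector is merely transverse to $\{\rho_1=0\}$). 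Since $\{\epsilon_1=0\}$ is the $\epsilon=0$ locus, blowing down identifies this curve with the branch of $\gamma$ in $x<0$. The mirror-image computation at $p_R$, where the within-plane eigenvalues $-2$ and $-2-1/k$ are negative, makes $p_R$ a sink in $\{\rho_1=0\}$ (hence the two-dimensional $W^s_{\mathrm{loc}}(p_R)$), while the eigenvalue $1/k>0$ gives the one-dimensional $W^u(p_R)\subset\{\epsilon_1=0\}$ tangent to $(1,0,0)$, blowing down to the branch of $\gamma$ in $x>0$.

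The part that requires care, rather than being a genuine obstacle, is the bookkeeping of the smooth remainders $f_1$, $g_1$ and $\phi_+$: one must verify that their $\rho_1$- and $\epsilon_1$-derivatives either vanish on the line or appear only in the (eigenvalue-irrelevant) off-diagonal second-row entries, so that the triangular structure is \emph{exact} and not just a leading-order approximation. Concretely, the hard step is confirming that the $\epsilon_1^k\phi_+$ and $\rho_1^k\epsilon_1^k\phi_+$ contributions drop out of the linearization on the line and that $\partial_{x_1}F_1=2$ there; once this is in place, the eigenvalues and the dimension count for the stable and unstable manifolds follow immediately.
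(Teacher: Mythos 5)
Your proposal is correct and is precisely the calculation that the paper's one-word proof (``Calculation'') refers to: linearize \eqref{r1Eqns} on the invariant line $\rho_1=\epsilon_1=0$ (where the stated $\dot\epsilon_1$-equation must carry the prefactor $\epsilon_1$, as you implicitly assume and as \eqref{pMinusEqn} confirms), read off the eigenvalues $x_1/k$, $-2x_1$, $-(2k+1)x_1/k$ from the block-triangular Jacobian, and identify the invariant manifolds via the invariance of the coordinate planes. Your parenthetical caveat that for $k=1$ the stable eigenvector at $p_L$ (resp.\ unstable at $p_R$) is only transverse to $\{\rho_1=0\}$ rather than exactly $(1,0,0)$ is a fair observation about the lemma's wording, but does not affect the substance, since that eigendirection is in any case the tangent to $\gamma$ in the blown-up coordinates.
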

\begin{proof}
 Calculation. 
\end{proof}

\begin{figure}
\begin{center}
\includegraphics[width=.75\textwidth]{./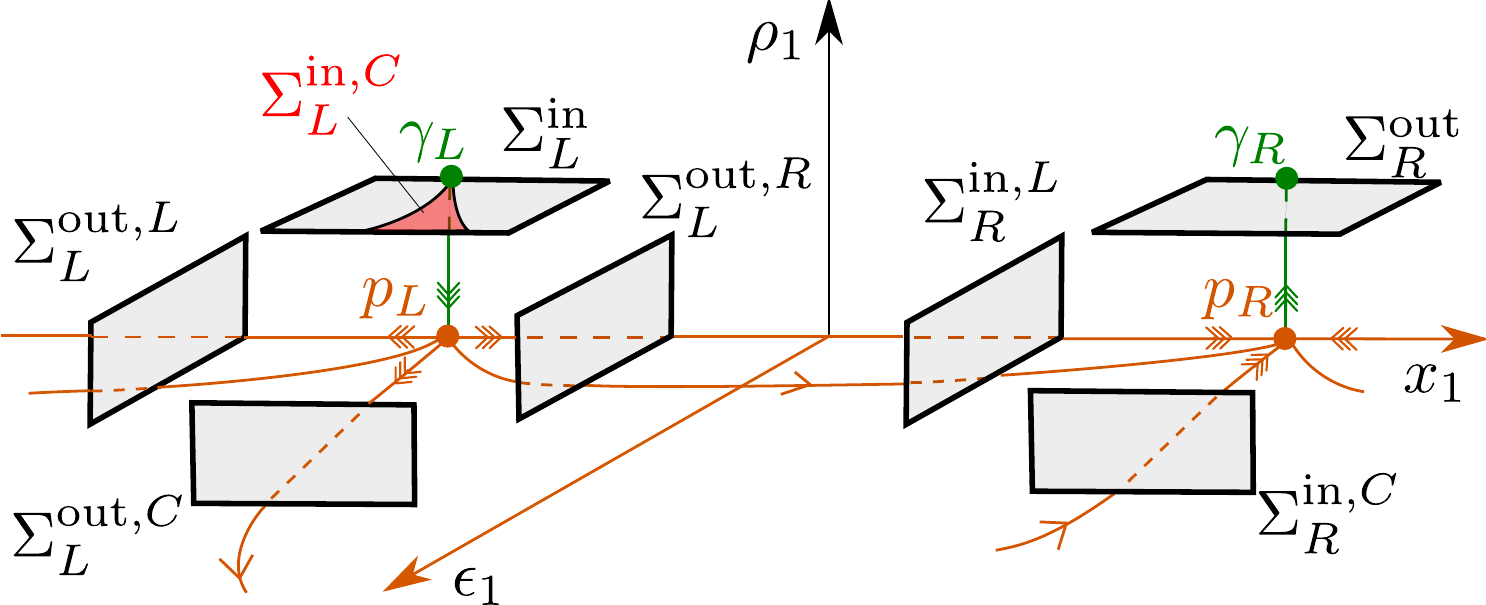}\end{center}
 \caption{Illustration of the dynamics in chart $(\bar r=1)_1$, see \eqref{epsilon11}. $p_L$ and $p_R$ are hyperbolic and we use partial, smooth linearizations, see \lemmaref{tilderho1tildex1Lemma} and \lemmaref{rho1x1eps1Tilde}, near this points to describe the local transition maps between the various sections shown. Notice $\rho_1=0$ corresponds to the sphere, obtained from the blowup of $\overline T$, which is (also) brown in \figref{visible2}.}
  \figlab{barRhoEq1}
              \end{figure}
         See \figref{barRhoEq1} for an illustration, compare also with the sphere on the right in \figref{visible2}.   
         
         \subsubsection*{\qss{A transition map $P_L^C$ near $p_L$}}
         \qs{ For the proof of \thmref{main1} we will need detailed information about transition maps near $p_{L/R}$.}  However, notice that there are strong resonances at $p_{L/R}$: 
\begin{align}
 \lambda_2 -\lambda_1-\lambda_3 &= 0,\eqlab{strongres}
 \end{align}
and hence we cannot (directly, at least) perform  a smooth linearization near these points. However, near $p_L$ and $p_R$ we have $F_1\approx \mp 2$, respectively, and we can therefore divide the right hand side of the equations by $-\frac12 F_1$ and $\frac12 F_1$, respectively, in a neighborhood of these points. Near $p_L$, for example, this produces
\begin{align}
 \dot \rho_1 &= -\frac{1}{k}\rho_1,\eqlab{pMinusEqn}\\
 \dot x_1 &= x_1-\frac{2(1+\rho_1^k f_1(\rho_1^k,x_1)}{2x_1+\rho_1^k g_1(\rho_1^k,x_1)}+\epsilon_1^k G_1(\rho_1,x_1,\epsilon_1),\nonumber\\
 \dot \epsilon_1 &=\frac{2k+1}{k}\epsilon_1,\nonumber
\end{align}
for some smooth $G_1$. \qs{We will then proceed (as is standard, see e.g. \cite{Gucwa2009783,kosiuk2011a,kristiansen2018a} and many others in similar contexts) to apply ``partial linearizations'' within invariant subsets. In particular, within $\epsilon_1=0$, where
\begin{align}
 \dot \rho_1 &=-\frac{1}{k}\rho_1,\eqlab{epsilon1Eq0Eqs}\\
 \dot x_1 &=x_1-\frac{2(1+\rho_1^k f_1(\rho_1^k,x_1)}{2x_1+\rho_1^k g_1(\rho_1^k,x_1)},\nonumber
\end{align}
we find a smooth linearization by exploiting its connection to $Z_+$, recall \eqref{ZpmNF}. Firstly, we have.}
\begin{lemma}\lemmalab{tilderho1tildex1Lemma}
 There exists a diffeomorphism defined by 
 \begin{align}
  (\tilde \rho_1,\tilde x_1)\mapsto \left\{\begin{array}{ccc}
                                            \rho_1& =& \tilde \rho_1 \widetilde{\mathcal R}_L^1(\tilde \rho_1^k,\tilde x_1),\\
                                            x_1  &=& \tilde x_1 \widetilde{\mathcal X}_L^1(\tilde \rho_1^k,\tilde x_1),
                                           \end{array}\right.\eqlab{tilderho1tildex1}
 \end{align}
 where 
 \begin{align}
   \widetilde{\mathcal R}_L^1(\tilde \rho_1^k,\tilde x_1),\widetilde{\mathcal X}_L^1(\tilde \rho_1^k,\tilde x_1)=1+\mathcal O(\tilde \rho_1^k),\eqlab{Q1prop}
 \end{align}
 for $\tilde x_1\in I$, $I$ a fixed open large interval, and $\tilde \rho_1\in [0,\xi]$. Furthermore, there exists a smooth and positive function $T$ -- defined on the same set and satisfying $T(0,\tilde x_1)=1$ for all $\tilde x_1$ -- such that upon applying \eqref{tilderho1tildex1} to \eqref{epsilon1Eq0Eqs}, we have
\begin{align*}
 \dot{\tilde \rho}_1 &=-\frac{1}{k}\tilde \rho_1 T(\tilde \rho_1,\tilde x_1),\\
 \dot{\tilde x}_1&= \left(\tilde x_1-\frac{1}{\tilde x_1}\right) T(\tilde \rho_1,\tilde x_1).
\end{align*}
in a neighborhood of $(\tilde \rho_1,\tilde x_1)=(0,-1)$. 
\end{lemma}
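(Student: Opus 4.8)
The plan is to exploit that, restricted to $\epsilon_1=0$, the system \eqref{epsilon1Eq0Eqs} is precisely the desingularized blowup of the planar flow $\dot z = Z_+(z)$ (the $\epsilon=0$ limit), and that a planar flow carries a smooth first integral near any regular point. Since $f(0)=0$, the fold point $q=(0,0)$ is a regular point of $Z_+$ (there $Z_+=(1,0)$), so the flow box theorem yields a smooth first integral $\eta$ of $Z_+$ in a neighbourhood of $q$, with zero level set $\{\eta=0\}=\gamma$. As $\gamma$ is a smooth graph $y=\gamma(x)$ with $\gamma(x)=x^2+\mathcal O(x^3)$ (the quadratic tangency, cf. \eqref{ZpmNF}), we may write $\eta=(y-\gamma(x))\,m(x,y)$ with $m$ smooth and nonvanishing. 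It is convenient to first set $s:=\rho_1^k$; along \eqref{pMinusEqn} one has $\dot s = k\rho_1^{k-1}\dot\rho_1=-s$ \emph{exactly}, so in $(s,x_1)$ the system reads $\dot s=-s$, $\dot x_1=x_1-\tfrac{2(1+sf_1)}{2x_1+sg_1}=:H(s,x_1)$, a smooth planar system with a hyperbolic saddle at $(s,x_1)=(0,-1)$ of eigenvalues $-1,2$, cf. \lemmaref{pLpR}.

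Pulling $\eta$ back through the weighted blowup $x=sx_1,\ y=s^2$ (recall \eqref{epsilon11} and \eqref{yRho1}) produces a smooth first integral $\Phi(s,x_1):=\eta(sx_1,s^2)$ of this planar system. The key computation is that $\Phi$ is smoothly divisible by $s^2$: because $\gamma'(0)=0$ the linear-in-$s$ term $\eta_x(0,0)\,x_1 s$ vanishes, and the weighting $y\sim s^2$ versus $x\sim s$ is exactly matched to the quadratic tangency, so no term of $s$-degree $<2$ occurs. Using $\gamma''(0)=2$ one finds $\eta_{xx}(0,0)=-2m(0,0)$, whence, after rescaling $\eta$ by the constant $-1/m(0,0)$, I obtain the factorisation
\[
\Phi(s,x_1)=s^2\big[(x_1^2-1)+s\,\widehat J(s,x_1)\big],
\]
with $\widehat J$ smooth.

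I would then define the transformation \eqref{tilderho1tildex1} by keeping the stable variable, $\tilde\rho_1:=\rho_1$ (so $\widetilde{\mathcal R}_L^1\equiv1$ and $\tilde s=s$), and by choosing $\tilde x_1$ so that $\Phi$ becomes the \emph{model} first integral, i.e.\ by imposing $\tilde s^2(\tilde x_1^2-1)=\Phi(s,x_1)$. Near $(0,-1)$ this gives $\tilde x_1=-\sqrt{x_1^2+s\,\widehat J(s,x_1)}=x_1(1+\mathcal O(s))$, which by the implicit function theorem inverts to $x_1=\tilde x_1\,\widetilde{\mathcal X}_L^1(\tilde\rho_1^k,\tilde x_1)$ with $\widetilde{\mathcal X}_L^1=1+\mathcal O(\tilde\rho_1^k)$, as required in \eqref{Q1prop}; it is a near-identity diffeomorphism on $\tilde x_1\in I$, $\tilde\rho_1\in[0,\xi]$ as long as $I$ stays away from $x_1=0$ (so the radicand is positive). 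Finally, since $\tilde s^2(\tilde x_1^2-1)=\Phi$ is a first integral and $\dot{\tilde s}=\dot s=-\tilde s$, differentiating along the flow gives $-2\tilde s^2(\tilde x_1^2-1)+2\tilde s^2\tilde x_1\dot{\tilde x}_1=0$, i.e.\ $\dot{\tilde x}_1=\tilde x_1-1/\tilde x_1$. This is exactly the asserted normal form, realised with the constant factor $T\equiv1$ (in particular positive and $T(0,\tilde x_1)=1$); re-expressing $s=\tilde\rho_1^k$ returns $\dot{\tilde\rho}_1=-\tfrac1k\tilde\rho_1$ and completes the construction.

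The only genuine obstacle is the smooth divisibility $\Phi=s^2[(x_1^2-1)+s\widehat J]$: one must verify that the flow-box first integral pulls back smoothly through the weighted blowup, that it vanishes to order exactly two in $s$ (which fixes the leading coefficient and uses $m(0,0)\neq0$ together with the quadratic tangency), and that the radicand stays positive on the large interval $I$. I emphasise that resonances are present here (cf.\ \eqref{strongres}; in the present two-dimensional slice $\lambda_2+2k\lambda_1=0$), so a genuine smooth linearisation of the saddle is obstructed; the first-integral argument sidesteps this entirely, since we never remove the resonant term but simply retain the nonlinear base dynamics $x_1-1/x_1$ and let the integral enforce the decoupling. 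This route is special to the slice $\epsilon_1=0$: for $\epsilon_1\neq0$ no first integral is available, and one must instead invoke the partial-linearisation technique of \cite{Gucwa2009783,kosiuk2011a,kristiansen2018a}, which in general produces a nonconstant positive factor $T$.
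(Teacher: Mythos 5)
Your proof is correct, but it takes a genuinely different route from the paper's. The paper also starts from the flow--box theorem at the regular point $q$, but it uses it to build a \emph{conjugacy} of $Z_+$ to the model fold $(\dot{\tilde x},\dot{\tilde y})=(1,2\tilde x)$ of the special form $x=\tilde x\widetilde{\mathcal X}(\tilde x,\tilde y)$, $y=\tilde y+\widetilde{\mathcal Y}(\tilde x,\tilde y)$ with $\widetilde{\mathcal Y}=\mathcal O(2)$, and then pushes this conjugacy through the weighted blowup; the positive factor $T$ then appears as the ratio of the two desingularizing time reparametrizations and is in general nonconstant, with $T(0,\cdot)=1$. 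You instead retain only the \emph{first integral} coming from the flow box, pull it back through the blowup, verify the divisibility $\Phi=s^2[(x_1^2-1)+s\widehat J]$ (which is the one computation that genuinely needs checking, and your accounting of it via $\gamma(x)=x^2+\mathcal O(x^3)$ and the Hadamard factorisation $\eta=(y-\gamma(x))m$ is sound), and then let the exactness of $\dot\rho_1=-\tfrac1k\rho_1$ force the $\tilde x_1$-equation. This buys you the sharper conclusion $T\equiv 1$ and $\widetilde{\mathcal R}^1_L\equiv 1$, which is strictly stronger than what the lemma asserts and feeds into the subsequent steps (the division by $T$ before \eqref{system11}, and the definition \eqref{TILDEEPS} of $\tilde\epsilon_1$) without change. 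Two small points you should make explicit: (i) $\Phi$ is a first integral of \eqref{epsilon1Eq0Eqs} and not merely of $Z_+$ because \eqref{epsilon1Eq0Eqs} is obtained from the blowup of $yZ_+$ by dividing by $-\tfrac12\rho_1^kF_1$, which is positive near $p_L$, so orbits -- hence first integrals -- are unchanged; (ii) the identity $\dot{\tilde x}_1=\tilde x_1-1/\tilde x_1$ is derived after cancelling $s^2$, so it is established for $s>0$ and extends to $s=0$ by continuity of both sides (where it also follows directly since $\tilde x_1=x_1$ there). Your remark that the construction is confined to the invariant slice $\epsilon_1=0$, with the resonance \eqref{strongres} blocking a full smooth linearization, matches the structure of the paper's argument, which likewise only linearizes the nonresonant $(x_1,\epsilon_1)$-subsystem afterwards in \eqref{linearize}.
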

\begin{proof}
By the flow-box theorem there exists a smooth, local diffeomorphism conjugating $Z_+$:
\begin{align*}
 \dot x &=1+f(x,y),\\
 \dot y &=2x+yg(x,y),
\end{align*}
with
\begin{align*}
 \dot{\tilde x} &=1,\\
 \dot{\tilde y} &=2\tilde x, 
\end{align*}
of the form
\begin{align}
 (\tilde x,\tilde y)\mapsto \left\{\begin{array}{ccc}
                                            x& =& \tilde x \widetilde{\mathcal X}(\tilde x,\tilde y),\\
                                            y  &=& \tilde y +\widetilde{\mathcal Y}(\tilde x,\tilde y),
                                           \end{array}\right.\eqlab{1sttransf}
\end{align}
where $\widetilde{\mathcal X}(0,0)=1$ and $\widetilde{\mathcal Y}(x,y)=\mathcal O(2)$ (i.e. $\widetilde{\mathcal Y}(0,0)=0$, $D\widetilde{\mathcal Y}(0,0)=0$). Notice that the transformation fixes the first axis. Furthermore, calculations also  show that $\widetilde{\mathcal Y}(x,y)-g(0,0)xy=\mathcal O(3)$. Now, we define $\tilde \rho_1$ and $\tilde x_1$ by $\tilde y=\tilde \rho_1^{2k}$, $\tilde x=\tilde \rho_1^k \tilde x_1$. Inserting this into \eqref{1sttransf} using $y=\rho_1^{2k}$ and $x=\rho_1^kx_1$ produces \eqref{tilderho1tildex1}. A straightforward calculation verifies the property in the lemma.
\end{proof}
Next, we define $\tilde \epsilon_1$ by
\begin{align}
 \epsilon_1 &=\tilde \epsilon_1 \widetilde{\mathcal R}_L^1(\tilde \rho_1^k,\tilde x_1)^{-2k-1},\eqlab{TILDEEPS}
\end{align}
which is invertible locally by \eqref{Q1prop}. 
Recall that $\rho_1^{2k+1}\epsilon_1= \epsilon$ const. Therefore by construction 
$$\tilde \rho_1^{2k+1} \tilde \epsilon_1 =\epsilon,$$
by \eqref{tilderho1tildex1} and \eqref{TILDEEPS}, also in the new tilde-coordinates $(\tilde \rho_1,x_1,\tilde \epsilon_1)$. In total: 
\begin{lemma}\lemmalab{rho1x1eps1Tilde}
The diffeomorphism defined by
\begin{align}
 (\tilde \rho_1,\tilde x_1,\tilde \epsilon_1)\mapsto  \left\{\begin{array}{ccc}
                                            \rho_1& =& \tilde \rho_1 \widetilde{\mathcal R}_L^1(\tilde \rho_1^k,\tilde x_1),\\
                                            x_1  &=& \tilde x_1 \widetilde{\mathcal X}_L^1(\tilde \rho_1^k,\tilde x_1),\\
                                            \epsilon_1&=&\tilde \epsilon_1 \widetilde{\mathcal R}_L^1(\tilde \rho_1^k,\tilde x_1)^{-2k-1},
                                           \end{array}\right.\eqlab{totaldiffeo}
\end{align}
transforms \eqref{pMinusEqn} into 
\begin{align}
 \dot{\tilde \rho}_1 &= -\frac{1}{k}\tilde \rho_1 T(\tilde \rho_1,\tilde x_1),\eqlab{tilderho1Eqss}\\
 \dot{\tilde x}_1 &=  \left(\tilde x_1-\frac{1}{\tilde x_1}\right) T(\tilde \rho_1,\tilde x_1) + \tilde \epsilon_1^k \widetilde G_1(\tilde \rho_1,\tilde x_1,\tilde \epsilon_1),\nonumber\\
 \dot{\tilde \epsilon}_1&=\frac{2k+1}{k}\tilde \epsilon_1 T(\tilde \rho_1,\tilde x_1),\nonumber
\end{align}
for which $\tilde \rho_1^{2k+1} \tilde \epsilon_1=\epsilon$ is a conserved quantity. 

\end{lemma}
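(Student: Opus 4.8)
The plan is to treat \eqref{totaldiffeo} as the two-dimensional diffeomorphism of \lemmaref{tilderho1tildex1Lemma} augmented by the rule \eqref{TILDEEPS} for the $\epsilon_1$-direction, and to exploit the conserved quantity in order to avoid any direct confrontation with the strong resonance \eqref{strongres}. First I would record that \eqref{totaldiffeo} is a genuine local diffeomorphism near $(\tilde\rho_1,\tilde x_1,\tilde\epsilon_1)=(0,-1,0)$: its Jacobian is block triangular, the $(\tilde\rho_1,\tilde x_1)$-block being invertible by \lemmaref{tilderho1tildex1Lemma}, and the remaining diagonal entry $\partial\epsilon_1/\partial\tilde\epsilon_1=(\widetilde{\mathcal R}_L^1)^{-2k-1}$ being nonzero by \eqref{Q1prop}. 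Next I would verify the conserved quantity by direct substitution: inserting \eqref{totaldiffeo} and cancelling the powers of $\widetilde{\mathcal R}_L^1$ gives $\tilde\rho_1^{2k+1}\tilde\epsilon_1=\rho_1^{2k+1}\epsilon_1$, and since $\rho_1^{2k+1}\epsilon_1$ is conserved along orbits of \eqref{pMinusEqn} (a one-line check using $\dot\rho_1=-\tfrac1k\rho_1$ and $\dot\epsilon_1=\tfrac{2k+1}{k}\epsilon_1$) and equals the parameter $\epsilon$, so is $\tilde\rho_1^{2k+1}\tilde\epsilon_1$. The exponent $-2k-1$ in \eqref{TILDEEPS} is chosen precisely to make this cancellation work.

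For the first two components I would argue as follows. Because the $(\rho_1,x_1)$-part of \eqref{totaldiffeo} coincides with \eqref{tilderho1tildex1} and is independent of $\epsilon_1$, and because $\{\epsilon_1=0\}$ is invariant for \eqref{pMinusEqn} and is mapped onto $\{\tilde\epsilon_1=0\}$, the restriction of the transformed system to $\tilde\epsilon_1=0$ is exactly the system produced by \lemmaref{tilderho1tildex1Lemma}; this already yields the $\tilde\epsilon_1=0$ parts of the first two equations in \eqref{tilderho1Eqss}. To reach the full system I would track the only $\epsilon_1$-dependent term of \eqref{pMinusEqn}, namely $\epsilon_1^k G_1$ in the $\dot x_1$-equation. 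Solving the linear relation for $(\dot{\tilde\rho}_1,\dot{\tilde x}_1)$ through the inverse of the $2\times2$ Jacobian and rewriting $\epsilon_1^k=\tilde\epsilon_1^k\,(\widetilde{\mathcal R}_L^1)^{-k(2k+1)}$, this contribution becomes $\tilde\epsilon_1^k$ times a smooth function; collecting it produces the term $\tilde\epsilon_1^k\widetilde G_1$ in the $\dot{\tilde x}_1$-equation, which simultaneously defines $\widetilde G_1$.

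The cleanest step is the $\dot{\tilde\epsilon}_1$-equation, which I would \emph{not} compute directly — the resonance \eqref{strongres} is exactly what obstructs a smooth linearization in that direction — but read off from the conserved quantity instead. Differentiating $\tilde\rho_1^{2k+1}\tilde\epsilon_1=\epsilon$ along the flow gives $\dot{\tilde\epsilon}_1=-(2k+1)\,\tilde\epsilon_1\,\dot{\tilde\rho}_1/\tilde\rho_1$, and substituting $\dot{\tilde\rho}_1=-\tfrac1k\tilde\rho_1 T$ yields $\dot{\tilde\epsilon}_1=\tfrac{2k+1}{k}\tilde\epsilon_1 T$ at once. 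This is the structural device that makes the third equation fall out for free once the $\dot{\tilde\rho}_1$-equation is pinned down.

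I expect the main obstacle to be the verification that the $\epsilon_1^k G_1$ perturbation feeds \emph{only} into the $\dot{\tilde x}_1$-equation and leaves $\dot{\tilde\rho}_1=-\tfrac1k\tilde\rho_1 T$ in its stated form; concretely, one must control how the nonlinear factor $\widetilde{\mathcal R}_L^1(\tilde\rho_1^k,\tilde x_1)$, which genuinely depends on $\tilde x_1$, couples the perturbation back into the $\tilde\rho_1$-component via the off-diagonal entry $\partial_{\tilde x_1}\rho_1$ of the Jacobian. Should a residual $\mathcal O(\tilde\epsilon_1^k)$-term survive in $\dot{\tilde\rho}_1$, it can be removed by a further near-identity, $\tilde\epsilon_1$-dependent adjustment of $\tilde\rho_1$, compensated in $\tilde\epsilon_1$ so as to preserve $\tilde\rho_1^{2k+1}\tilde\epsilon_1$; after this adjustment the conserved-quantity computation delivers the stated $\dot{\tilde\epsilon}_1$-equation unchanged. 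The invertibility and the conserved-quantity identity are routine, so essentially all of the care is concentrated in this bookkeeping of the $\epsilon_1^k$-term.
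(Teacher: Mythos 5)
Your proposal follows essentially the same route as the paper: the paper likewise obtains \eqref{totaldiffeo} by augmenting the two--dimensional diffeomorphism of \lemmaref{tilderho1tildex1Lemma} with the rescaling \eqref{TILDEEPS}, checks local invertibility via \eqref{Q1prop}, and obtains the conserved quantity (hence the $\dot{\tilde \epsilon}_1$--equation) by exactly the cancellation of powers of $\widetilde{\mathcal R}_L^1$ that you describe; the paper in fact offers no further detail before asserting the lemma ``in total''. All of your individual computations (the block--triangular Jacobian, $\tilde\rho_1^{2k+1}\tilde\epsilon_1=\rho_1^{2k+1}\epsilon_1$, the one--line conservation check for \eqref{pMinusEqn}, and reading off $\dot{\tilde\epsilon}_1=-(2k+1)\tilde\epsilon_1\dot{\tilde\rho}_1/\tilde\rho_1$) are correct.

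The place where you go beyond the paper is your final paragraph, and the worry you raise there is legitimate rather than hypothetical. Since $\widetilde{\mathcal R}_L^1(\tilde\rho_1^k,\tilde x_1)$ genuinely depends on $\tilde x_1$ whenever $g(0,0)\neq 0$ (the paper itself records $\widetilde{\mathcal Y}(x,y)-g(0,0)xy=\mathcal O(3)$), the off--diagonal entry $\partial_{\tilde x_1}\rho_1=\tilde\rho_1\,\partial_{\tilde x_1}\widetilde{\mathcal R}_L^1=\mathcal O(\tilde\rho_1^{k+1})$ does feed the $\epsilon_1^kG_1$--term of \eqref{pMinusEqn} back into $\dot{\tilde\rho}_1$, leaving a residual of order $\tilde\rho_1^{k+1}\tilde\epsilon_1^k$ there (and, via the conservation law, a matching residual in $\dot{\tilde\epsilon}_1$) which \eqref{tilderho1Eqss} as written does not display. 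Your proposed remedy --- a further near--identity, $\tilde\epsilon_1$--dependent correction of $\tilde\rho_1$, compensated in $\tilde\epsilon_1$ so as to preserve $\tilde\rho_1^{2k+1}\tilde\epsilon_1=\epsilon$ --- is the right device, and it is available here because monomials $\tilde\rho_1^{a}(\tilde x_1+1)^{b}\tilde\epsilon_1^{k}$ with $a=k+1$, $b\ge 0$ are nonresonant for the spectrum $\bigl(-\tfrac1k,\,2,\,\tfrac{2k+1}{k}\bigr)$ at $p_L$; alternatively one may simply carry the $\mathcal O(\tilde\rho_1^{k+1}\tilde\epsilon_1^{k})$ remainder along, since the conservation law still fixes the $\rho_1$--component of the local transition map exactly and the remainder is harmless in the subsequent estimates. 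As it stands, though, this step is only sketched: you should either carry out the normal--form removal (solving the associated homological equation with $\tilde x_1$--dependent coefficients) or restate the lemma with the explicit higher--order remainder. This is the only gap; everything else is routine and correctly handled.
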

We now drop the tildes on $(\tilde \rho_1,\tilde x_1,\tilde \epsilon_1)$ and transform time by dividing the right hand side of \eqref{tilderho1Eqss} by $T$. This produces the following equations
\begin{align*}
 \dot{\rho}_1 &= -\frac{1}{k} \rho_1,\\
 \dot{x}_1 &=  x_1-\frac{1}{x_1} + \epsilon_1^k G_1( \rho_1,x_1,\epsilon_1),\\
 \dot{\epsilon}_1&=\frac{2k+1}{k}\epsilon_1.
\end{align*}
Next, for the $\rho_1=0$ sub-system:
\begin{align}
 \dot{x}_1 &=  x_1-\frac{1}{x_1} + \epsilon_1^k G_1( 0,x_1,\epsilon_1),\eqlab{system11}\\
 \dot{\epsilon}_1&=\frac{2k+1}{k}\epsilon_1,\nonumber
\end{align}
the linearization about $x_1=-1$, $\epsilon_1=0$ produces eigenvalues $\lambda_2$ and $\lambda_3$ which are nonresonant. Therefore there exists a smooth local diffeomorphism defined by
\begin{align}
 (\tilde x_1,\tilde \epsilon_1)\mapsto \left\{\begin{array}{ccc}
                                               x_1 &=&\widetilde{\mathcal X}_L^2(\tilde x_1,\tilde \epsilon_1)\\
                                               \epsilon_1 &=&\tilde \epsilon_1,
                                              \end{array}\right.\eqlab{linearize}
\end{align}
with $\widetilde{\mathcal X}_L^2(0,0)=-1$, $D\widetilde{\mathcal X}_L^2(0,0)=(1,*)$, 
that linearizes \eqref{system11}. Here $*$ is an unspecified entry, which -- following \eqref{system11} -- is $0$ for any $k\ge 2$. Applying the transformation \eqref{linearize} to the full system (fixing $\rho_1$) produces
\begin{align}
 \dot{\rho}_1 &= -\frac{1}{k} \rho_1,\eqlab{finalL}\\
 \dot{x}_1 &=  2x_1+ \epsilon_1^k G_1( \rho_1,x_1,\epsilon_1),\nonumber\\
 \dot{\epsilon}_1&=\frac{2k+1}{k}\epsilon_1,\nonumber
\end{align}
for some new smooth $G_1( \rho_1,x_1,\epsilon_1)=\mathcal O(\rho_1 \epsilon_1+\rho_1^{k})$, using again the same symbols for simplicity.  We illustrate the local dynamics in \figref{barRhoEq1}. 

\qss{
Denote the resulting diffeomorphism, obtained by composing \eqref{totaldiffeo} with \eqref{linearize}, by $\Psi_L$. It takes the following form:
\begin{align}
 (\tilde \rho_1,\tilde x_1,\tilde \epsilon_1)\mapsto  \left\{\begin{array}{ccc}
                                            \rho_1& =& \tilde \rho_1 \widetilde{\mathcal R}_L(\tilde \rho_1^k,\tilde x_1,\tilde \epsilon_1),\\
                                            x_1  &=&  \widetilde{\mathcal X}_L(\tilde \rho_1^k,\tilde x_1,\tilde \epsilon_1),\\
                                            \epsilon_1&=&\tilde \epsilon_1 \widetilde{\mathcal R}_L(\tilde \rho_1^k,\tilde x_1,\tilde \epsilon_1)^{-2k-1},
                                           \end{array}\right.\eqlab{totaldiffeo1}
\end{align}
with smooth functions satisfying $\widetilde{\mathcal R}_L(\tilde \rho_1^k,\tilde x_1,\tilde \epsilon_1) = \mathcal O(\tilde \rho_1^k)$, $\widetilde{\mathcal X}_L(0,0,0) = -1$, $D\widetilde{\mathcal X}_L(0,0,0) = (0,1,*)$. 
In particular, the one-dimensional mapping
\begin{align}
 \tilde x_1\mapsto x_1=\widetilde{\mathcal X}_L(\tilde \rho_1^k,\tilde x_1,\tilde \epsilon_1),\eqlab{tildex1x}
\end{align}
obtained from the $x_1$-entry of \eqref{totaldiffeo1} by fixing any $\tilde \rho_1\in [0,\xi]$ and $\tilde \epsilon_1\in [0,\xi]$, is a diffeomorphism on the set defined by $\tilde x_1\in [-\xi,\xi]$, taking $\xi$ small enough.}

\qss{
We now consider the following section,
$$\Sigma_L^{\text{in}} =\left\{(\rho_1,x_1,\epsilon_1)\vert \rho_1=\delta^{1/2k}>0,\,x_1 \in [-\beta_1,\beta_1],\,\epsilon_1 \in [0,\beta_2]\right\},$$ instead of $\Sigma_L$, recall \eqref{SigmaL0} and \eqref{yRho1}, containing $\gamma_L$ as $\rho_1=\delta^{1/2k},x_1=0,\epsilon_1=0$ in these coordinates. Obviously, $\delta^{1/2k},\beta_1,\beta_2$ are all less than $\xi$. }
\begin{lemma}\lemmalab{X1C}
\qss{
For any $\eta>0$, $\delta>0$ and $\nu>0$ small enough, we 
let $\theta\in (0,\eta)$ and consider the wedge
 \begin{align}
  \Sigma_L^{{\text{in}},C} = \Sigma_L^{\text{in}} \cap \{\vert x_1 \vert \le \theta (\epsilon_1\nu^{-1})^{2k/(2k+1)}\},\eqlab{sigmaLinC}
 \end{align}}
 consisting of all points in $\Sigma_L^{{\text{in}}}$ with $\vert x_1 \vert \le \theta (\epsilon_1\nu^{-1})^{2k/(2k+1)}$, and the section 
\begin{align*}
 \Sigma_L^{\text{out},C} = \{(\rho_1,x_1,\epsilon_1)\vert \rho_1\in [0,\beta_3],\,x_1 \in [-\eta,\eta],\,\epsilon_1=\nu\}.
\end{align*}
(The sections $\Sigma_L^{{\text{in}},C}$ and $\Sigma_L^{{\text{out}},C}$ are illustrated in \figref{barRhoEq1} in the original coordinates.)
Then there exist appropriate constants $\beta_i$, $i=1,2,3$ such that the transition map $P_L^{C}:\Sigma_L^{\text{in},C}\rightarrow \Sigma_L^{\text{out},C}$ obtained by the forward flow of \eqref{finalL} is well-defined and of the following form
\begin{align*}
 P_L^C(\rho_1,x_1,\epsilon_1) = \begin{pmatrix}
                               \left(\epsilon_1 \nu^{-1}\right)^{1/(2k+1)} \delta^{1/2k}\\
                               X_L^C(x_1,\epsilon_1)\\
                               \nu
                              \end{pmatrix}
\end{align*}
where $X_L^C(\cdot,\epsilon_1)$  is $C^2$ $\mathcal O(\epsilon_1^{1/(2k+1)})$-close to the linear map $x_1\mapsto \left(\epsilon_1\nu^{-1}\right)^{-2k/(2k+1)} x_1$:
\begin{align}
 X_L^C(x_1,\epsilon_1)=\left(\epsilon_1\nu^{-1}\right)^{-2k/(2k+1)} x_1+\mathcal O(\epsilon_1^{1/(2k+1)}).\eqlab{XLC}
\end{align}
\end{lemma}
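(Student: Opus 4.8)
The plan is to exploit that in the normal form \eqref{finalL} the $\rho_1$- and $\epsilon_1$-equations are linear and completely decoupled from $x_1$, so that their flow can be written down explicitly and the whole problem reduces to a single scalar equation for $x_1$ driven by an explicitly known, exponentially varying forcing. First I would solve $\dot\rho_1=-\rho_1/k$ and $\dot\epsilon_1=\tfrac{2k+1}{k}\epsilon_1$ with data on $\Sigma_L^{\text{in},C}$, obtaining $\rho_1(t)=\delta^{1/2k}e^{-t/k}$ and $\epsilon_1(t)=\epsilon_1 e^{(2k+1)t/k}$. The exit condition $\epsilon_1(T)=\nu$ then fixes the transition time $T=\tfrac{k}{2k+1}\log(\nu/\epsilon_1)$, and substituting back gives the first component $\rho_1(T)=\delta^{1/2k}(\epsilon_1\nu^{-1})^{1/(2k+1)}$ together with the trivial third component $\epsilon_1(T)=\nu$, exactly as claimed. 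The conserved quantity $\rho_1^{2k+1}\epsilon_1=\epsilon$ from \lemmaref{rho1x1eps1Tilde} serves as a consistency check.

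Next I would treat the scalar equation $\dot x_1=2x_1+\epsilon_1^{k}G_1(\rho_1,x_1,\epsilon_1)$, with $G_1=\mathcal O(\rho_1\epsilon_1+\rho_1^{k})$, by variation of constants around the linear growth $\dot x_1=2x_1$:
\begin{align*}
 x_1(T)=e^{2T}x_1(0)+e^{2T}\int_0^T e^{-2t}\,\epsilon_1(t)^{k}\,G_1(\rho_1(t),x_1(t),\epsilon_1(t))\,dt.
\end{align*}
The first term equals $(\epsilon_1\nu^{-1})^{-2k/(2k+1)}x_1(0)$, which is precisely the linear map in \eqref{XLC}. Before estimating the integral I would invoke the wedge restriction $\vert x_1\vert\le\theta(\epsilon_1\nu^{-1})^{2k/(2k+1)}$ in \eqref{sigmaLinC}: it is calibrated so that $\vert e^{2T}x_1(0)\vert\le\theta<\eta$, which guarantees (together with the small correction) that the forward orbit genuinely leaves through $\Sigma_L^{\text{out},C}$ and that $\vert x_1(t)\vert$ stays bounded in transit, so the bound on $G_1$ applies along the whole orbit. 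A standard contraction-mapping/Gronwall argument on this integral equation then yields existence, uniqueness, and hence well-definedness of $P_L^C$.

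The core estimate is the perturbation integral. Inserting the explicit solutions gives $\rho_1(t)\epsilon_1(t)=\delta^{1/2k}\epsilon_1\,e^{2t}$ and $\rho_1(t)^{k}=\delta^{1/2}e^{-t}$, so $e^{-2t}\epsilon_1(t)^{k}G_1$ becomes a sum of explicit exponentials. Evaluating the two elementary integrals and multiplying by $e^{2T}$, the dominant contribution comes from the $\rho_1\epsilon_1$ part of $G_1$, where the power of $\epsilon_1$ collects to $1-\tfrac{2k}{2k+1}=\tfrac{1}{2k+1}$; the $\rho_1^{k}$ part contributes at strictly higher order for $k\ge 2$ (and at order $\epsilon_1^{1/3}\log\epsilon_1^{-1}$ for $k=1$, which is subleading). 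This establishes the remainder in \eqref{XLC}. For the $C^2$ claim I would differentiate the integral representation once and twice in $x_1$, using the variational equation $\dot a=(2+\epsilon_1^{k}\partial_{x_1}G_1)a$, whose solution is $e^{2t}$ times a factor $1+\mathcal O(\epsilon_1^{1/(2k+1)})$, and then apply the identical exponential bookkeeping to $\partial_{x_1}G_1$ and $\partial^2_{x_1}G_1$, which obey the same $\mathcal O(\rho_1\epsilon_1+\rho_1^{k})$ bounds; the cancellation of the Duhamel factor $e^{-2t}$ against the variational growth $e^{2t}$ is what keeps the derivative corrections controlled relative to the large linear slope $e^{2T}$.

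The step I expect to be the main obstacle is exactly this exponential bookkeeping over the long transition time $T\sim\log(1/\epsilon_1)$: the integrand simultaneously carries the growing factor $e^{2t}$, the decaying $\rho_1(t)^{k}$, and the growing $\epsilon_1(t)^{k}$, and one must track all exponents carefully to see that everything balances to the single power $\epsilon_1^{1/(2k+1)}$. It is precisely here that the weights $2k,k,1$ in the blowup \eqref{blowupSphere} and the $\epsilon_1$-dependent width of the wedge \eqref{sigmaLinC} are used essentially — a mismatch in any weight would spoil the clean exponent — and this detailed map is what feeds into the proof of \thmref{main1}(b)--(d).
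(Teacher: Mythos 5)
Your proposal is correct and takes essentially the same route as the paper: the paper's proof likewise integrates the linear $\rho_1$- and $\epsilon_1$-equations explicitly, substitutes $x_1=e^{2t}u$ (your variation-of-constants step) and estimates $u$ by direct integration, handling the $C^2$ claim through the variational equations exactly as you propose. One small correction to your bookkeeping: for $k=1$ the $(\rho_1\epsilon_1)^k$ contribution you compute, of order $\epsilon_1^{1/3}\log\epsilon_1^{-1}$, is \emph{not} subleading relative to $\epsilon_1^{1/(2k+1)}=\epsilon_1^{1/3}$ --- it is the imprint of the resonance \eqref{strongres} --- so in that case the remainder acquires a logarithmic factor (still $o(1)$, and harmless for how the lemma is used later, but your parenthetical dismissal of it is wrong as stated).
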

\begin{proof}
 We consider \eqref{finalL} and integrate the linear $\rho_1$- and $\epsilon_1$-equations and insert the results into the $x_1$-equation. We then write $x_1=e^{2t} u$ and estimate $u$ through direct integration. Returning to $x_1$ gives the desired result. The derivatives of $X_L^C$ with respect to $x_1$ can be handled in the exact same way by looking at the variational equations. The estimates on $u$ do not change by this differentiation. 
 \end{proof}
  \begin{remark}\remlab{theta}
  \qss{
   Notice that $$P_L^C(\rho_1,\pm \theta (\epsilon_1\nu^{-1})^{2k/(2k+1)},\epsilon_1)=\begin{pmatrix}
                               \left(\epsilon_1 \nu^{-1}\right)^{1/(2k+1)} \delta^{1/2k}\\
                               \pm \theta+\mathcal O(\epsilon_1^{1/(2k+1)})\\
                               \nu
                              \end{pmatrix},$$
                              and the parameter $\theta\in (0,\eta)$ in \eqref{sigmaLinC} therefore measures the extent to which $\Sigma_L^{{\text{in}},C}$ ``stretches upon reaching $\Sigma_L^{\text{out},C}$ through the forward flow. }
  \end{remark}

\subsubsection*{\qss{A transition map $P_R^C$ near $p_R$}}
Returning to \eqref{r1Eqns}, we can perform the exact same analysis near $p_R$. In other words: Near $p_R$ there exists a regular transformation of time and a diffeomorphism $\Psi_R$ -- defined for $\tilde \rho_1,\tilde x_1,\tilde \epsilon_1\in [0,\xi]$ with $\xi$ small enough -- of the following form
\begin{align}
 (\tilde \rho_1,\tilde x_1,\tilde \epsilon_1)\mapsto  \left\{\begin{array}{ccc}
                                            \rho_1& =& \tilde \rho_1 \widetilde{\mathcal R}_R(\tilde \rho_1^k,\tilde x_1,\tilde \epsilon_1),\\
                                            x_1  &=& \widetilde{\mathcal X}_R(\tilde \rho_1^k,\tilde x_1,\tilde \epsilon_1),\\
                                            \epsilon_1&=&\tilde \epsilon_1 \widetilde{\mathcal R}_R(\tilde \rho_1^k,\tilde x_1,\tilde \epsilon_1)^{-2k-1},
                                           \end{array}\right.\eqlab{totaldiffeoPsiR}
\end{align}
%
with smooth functions satisfying $\widetilde{\mathcal R}_R(\tilde \rho_1^k,\tilde x_1,\tilde \epsilon_1)=\mathcal O(\tilde \rho_1^k)$, $\widetilde{\mathcal X}_R(0,0,0) = 1$, and $D\widetilde{\mathcal X}_R(0,0,0) = (0,1,*)$, that together transform \eqref{r1Eqns} into
%
\begin{align}
 \dot \rho_1 &= \frac{1}{k}\rho_1,\eqlab{here}\\
 \dot x_1 &=-2x_1+\epsilon_1^k G_1(\rho_1,x_1,\epsilon_1),\nonumber\\
 \dot \epsilon_1 &=-\frac{2k+1}{k}\epsilon_1,\nonumber
\end{align}
for which $\rho_1^{2k+1}\epsilon_1=\epsilon$ is conserved. Here we have dropped the tildes on $(\tilde \rho_1^k,\tilde x_1,\tilde \epsilon_1)$ and introduced  $G_1(\rho_1,x_1,\epsilon_1)=\mathcal O(\epsilon_1\rho_1+\rho_1^k)$ as a (new) smooth function. 

As for $\Psi_L$, the one-dimensional mapping
\begin{align}
\tilde x_1\mapsto x_1  &= \widetilde{\mathcal X}_R(\tilde \rho_1^k,\tilde x_1,\tilde \epsilon_1),\eqlab{x1PsiR}
\end{align}
obtained from the $x_1$-entry of \eqref{totaldiffeoPsiR} by fixing any $\tilde \rho_1$, $\tilde \epsilon_1\in [0,\xi]$, is also a diffeomorphism on the set defined by $\tilde x_1\in [-\xi,\xi]$. We write the inverse of \eqref{x1PsiR}, which will be important in our analysis below of the mapping $Q$ in \thmref{main1}, as
\begin{align}
x_1\mapsto \tilde x_1  &= {\mathcal X}_R(\tilde \rho_1^k,x_1,\tilde \epsilon_1),\eqlab{x1PsiRinv}
\end{align}
satisfying $\mathcal X_R(0,1,0,)=0$, $D\mathcal X_R(0,1,0)=(0,1,*)$. 



In the coordinates of \eqref{here}, we now consider
\begin{align*}
 \Sigma_R^{\text{out}} =\left\{(\rho_1,x_1,\epsilon_1)\vert \rho_1=\delta^{1/2k}>0,\,x_1 \in [-\beta_1,\beta_1],\,\epsilon_1 \in [0,\beta_2]\right\},
\end{align*}
instead of $\Sigma_R$, recall \eqref{SigmaR0}, 
containing $\gamma_L$ as $\rho_1=\delta^{1/2k},x_1=0,\epsilon_1=0$ in these coordinates.  
\begin{lemma}\lemmalab{P1}
\qss{
For any $\eta>0$, $\delta>0$ and $\nu>0$ small enough, we 
consider the section
$$\Sigma_R^{\text{in},C} = \{(\rho_1,x_1,\epsilon_1)\vert \rho_1\in [0,\beta_3],\,x_1 \in [-\eta,\eta],\,\epsilon_1=\nu\}.$$ }
(The sections $\Sigma_R^{{\text{in}},C}$ and $\Sigma_R^{{\text{out}}}$ are illustrated in \figref{barRhoEq1} in the original coordinates.)
Then there exist appropriate constants $\beta_i$, $i=1,\ldots,3$ such that the transition map $P_R^{C}:\Sigma_R^{\text{in},C}\rightarrow \Sigma_R^{\text{out}}$ obtained by the forward flow is well-defined and of the following form
\begin{align*}
 P_R^C(\rho_1,x_1,\epsilon_1) = \begin{pmatrix}
                               \delta^{1/2k}\\
                               X_{R}^C(\rho_1,x_1)\\
                               \left(\rho_1\delta^{-1/2k}\right)^{2k+1} \nu
                              \end{pmatrix}
\end{align*}
where $X_R^C(\rho_1,\cdot)$ is $C^2$ $\mathcal O(\rho_1^{2k+1})$-close to the linear map $x_1\mapsto \rho_1^{2k} \delta^{-1} x_1$:
\begin{align}
 X_R^C(\rho_1,x_1)=\rho_1^{2k} \delta^{-1} x_1+\mathcal O(\rho_1^{2k+1}).\eqlab{X1C}
\end{align}
\end{lemma}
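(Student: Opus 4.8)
The plan is to mirror the proof of \lemmaref{X1C} almost verbatim, since the normal form \eqref{here} near $p_R$ has exactly the same structure as \eqref{finalL} near $p_L$, with the roles of the expanding and contracting directions interchanged. The $\rho_1$- and $\epsilon_1$-equations in \eqref{here} are linear and decouple from $x_1$, so I would first integrate them explicitly: a trajectory entering $\Sigma_R^{\text{in},C}$ at a point $(\rho_1,x_1,\nu)$ satisfies $\rho_1(t)=\rho_1 e^{t/k}$ and $\epsilon_1(t)=\nu e^{-(2k+1)t/k}$. Since $\dot\rho_1>0$, the exit condition $\rho_1(T)=\delta^{1/2k}$ defining $\Sigma_R^{\text{out}}$ is reached in finite time $T$ for every $\rho_1\in(0,\beta_3]$, with $e^{T/k}=\rho_1^{-1}\delta^{1/2k}$. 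Choosing $\beta_1,\beta_2,\beta_3$ small enough that the trajectory does not leave the chart then yields the well-definedness of $P_R^C$.

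The first and third components are immediate. The first is $\delta^{1/2k}$ by the very definition of $\Sigma_R^{\text{out}}$. For the third, rather than integrate, I would invoke the conserved quantity $\rho_1^{2k+1}\epsilon_1=\epsilon$ recorded just after \eqref{here}: equating its entry and exit values gives $\rho_1^{2k+1}\nu=(\delta^{1/2k})^{2k+1}\epsilon_1(T)$, hence $\epsilon_1(T)=(\rho_1\delta^{-1/2k})^{2k+1}\nu$, exactly as claimed.

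The substance lies in the $x_1$-component. Following \lemmaref{X1C}, I would strip off the dominant linear contraction by setting $x_1=e^{-2t}u$, which turns the $x_1$-equation of \eqref{here} into $\dot u=e^{2t}\epsilon_1(t)^k G_1(\rho_1(t),x_1(t),\epsilon_1(t))$, so that $u(T)=x_1(0)+\int_0^T e^{2s}\epsilon_1(s)^k G_1\,ds$. Using $G_1=\mathcal O(\epsilon_1\rho_1+\rho_1^k)$ together with the explicit exponentials for $\rho_1(s)$ and $\epsilon_1(s)$, the two resulting integrands are, up to constants, $\rho_1 e^{-(2k+1)s}$ and $\rho_1^k e^{-2(k-1)s}$; both carry a nonpositive exponent, so the integral is bounded by $\mathcal O(\rho_1)$ uniformly in $T$, giving $u(T)=x_1(0)+\mathcal O(\rho_1)$. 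Returning via $x_1(T)=e^{-2T}u(T)$ and using $e^{-2T}=(e^{T/k})^{-2k}=\rho_1^{2k}\delta^{-1}$ produces $X_R^C(\rho_1,x_1)=\rho_1^{2k}\delta^{-1}x_1+\mathcal O(\rho_1^{2k+1})$, which is \eqref{X1C}. The $C^1$ and $C^2$ estimates follow exactly as in \lemmaref{X1C}: differentiating the variational equations gives linear systems with the same linear part and forcing of the same exponential type, so the bound on $u$, and hence on the remainder, is unchanged.

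The main obstacle is the uniform control of this forcing integral: one must verify that every contribution of $\epsilon_1^k G_1$ decays exponentially along the flow so that the perturbation stays a full factor $\rho_1$ below the leading term $\rho_1^{2k}\delta^{-1}x_1$, and that this persists after two $x_1$-differentiations through the variational equations. The one delicate point is the marginal case of smallest admissible $k$, where the exponent $-2(k-1)$ in the second integrand degenerates to $0$ and the integral acquires a factor $T=\mathcal O(\log\rho_1^{-1})$; there I would absorb the resulting $\rho_1^{2k+1}\log\rho_1^{-1}$ into a slightly enlarged remainder, noting that it is still $o(\rho_1^{2k})$ and therefore does not affect the leading behavior exploited later in the analysis of $Q$.
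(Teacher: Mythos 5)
Your proposal is correct and follows essentially the same route as the paper, which likewise integrates the linear $\rho_1$- and $\epsilon_1$-equations of \eqref{here}, inserts the result into the $x_1$-equation, strips off the contraction via $x_1=e^{-2t}u$, and treats the derivatives through the variational equations exactly as in the proof of \lemmaref{X1C}. Your caveat about the logarithmic factor in the marginal case of smallest $k$ is a reasonable observation, consistent with the paper's own later remark that resonances produce $\log$'s, and it does not affect how the estimate is used downstream.
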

\begin{proof}
 We consider \eqref{here} and proceed as in the proof of \lemmaref{X1C} by integrating the $\rho_1$- and the $\epsilon_1$-equations and insert the result into the $x_1$-equation. Further details are therefore left out.
\end{proof}

%
%
%

\subsection{Chart $(\bar \epsilon=1)_2$}\seclab{eps1}
In this chart, by inserting \eqref{epsilon12} into \eqref{y1Eqs}, we obtain the following equations
\begin{align*}
 \dot \rho_2 &=-\rho_2 F_2(\rho_2,r_2,x_2),\\
 \dot r_2 &=  (2k+1)r_2 F_2(\rho_2,r_2,x_2),\\
 \dot x_2 &=r_2\left(1-\rho_2^k \phi_+(\rho_2^{2k}r_2,\rho_2)\right)\left(1+\rho_2^kf_2(\rho_2,r_2,x_2)\right)+kx_2F_2(\rho_2,r_2,x_2),
 \end{align*}
where 
\begin{align*}
 F_2(\rho_2,r_2,x_2)=(1-\rho_2^k \phi_+(\rho_2^{2k}r_2,\rho_2))(2x_2+\rho_2^k r_2 g_2(\rho_2^k, r_2,x_2))+ \phi_+(\rho_2^{2k}r_2,\rho_2),
\end{align*}
and
\begin{align*}
f_2(\rho_2^k,r_2,x_2) &= \rho_2^{-k} f(\rho_2^{k}x_2,\rho_2^{2k} r_2),\quad g_2(\rho_2^k,r_2,x_2) = g(\rho_2^{k}x_2,\rho_2^{2k}r_2).
\end{align*}
Along the invariant set $\rho_2=r_2=0$, we have
\begin{align*}
 \dot x_2 = kx_2 \left(2x_2+\phi_+(0,0)\right),
\end{align*}
so that $x_2=0$ and $x_2=-\frac12 \phi_+(0,0)$ are equilibria, the former being repelling while the latter is attracting. Linearization of the full system about these equilibria produces the following lemma by standard theory \cite{perko2001a}.
\begin{lemma}\lemmalab{M1}
We have
\begin{enumerate}
 \item The point $p_a:\,(\rho_2,r_2,x_2)=\left(0,0,-\frac12 \phi_+(0,0)\right)$ is partially hyperbolic, the linearization having only one single non-zero eigenvalue $\lambda = -k\phi_+(0,0)<0$. As a consequence there exists a center manifold $M_{1}$ of $p_a$ which contains $S_1$ within $r_2=0$ as a manifold of equilibria and a unique center manifold within $\rho_2=0$, along which $r_2$ is increasing, which is tangent to the eigenvector $(0,k\phi_+,1)^T$. The equilibrium $p_a$ is therefore a nonhyperbolic saddle.
 \item The point $p_f:\,(\rho_2,r_2,x_2)=(0,0,0)$ is fully hyperbolic, the linearization having two positive eigenvalues and one negative. The stable manifold is $r_2=x_2=0,\,\rho_2\ge 0$ whereas $W^u_{loc}$ is a neighborhood of $(r_2,x_2)=(0,0)$ within $\rho_2=0$. 
 \end{enumerate}
\end{lemma}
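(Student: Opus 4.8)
The statement is a linearization computation at the two equilibria combined with standard invariant-manifold theory, so the plan is to assemble the Jacobian on the invariant set $\{\rho_2=r_2=0\}$, read off the spectra and eigenvectors, and then use the two invariant coordinate planes to identify the invariant manifolds. First I would record that, since $k\ge 1$, every factor $\rho_2^k$ and every factor $r_2$ vanishes on $\{\rho_2=r_2=0\}$; hence $F_2(0,0,x_2)=2x_2+\phi_+(0,0)$ and $\dot x_2=kx_2(2x_2+\phi_+(0,0))$ there, whose zeros $x_2=0$ and $x_2=-\tfrac12\phi_+(0,0)$ are exactly $p_f$ and $p_a$. I would also note at the outset that both $\{\rho_2=0\}$ (the locus $\epsilon=0$) and $\{r_2=0\}$ are invariant, a fact used repeatedly below.

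Next I would compute $D(\dot\rho_2,\dot r_2,\dot x_2)$ at a general point of $\{\rho_2=r_2=0\}$. The factored form of the equations makes this almost immediate: since $\dot\rho_2=-\rho_2 F_2$ and $\dot r_2=(2k+1)r_2 F_2$ carry explicit factors $\rho_2$ and $r_2$, the first two rows collapse to $(-F_2,0,0)$ and $(0,(2k+1)F_2,0)$. For the $x_2$-row one uses $\partial_{x_2}F_2=2$ and $\partial_{r_2}F_2=0$ on $\{\rho_2=0\}$ to find $\partial_{r_2}\dot x_2=1$, since the leading term $r_2(1-\rho_2^k\phi_+)(1+\rho_2^k f_2)$ reduces to $r_2$ there while the remaining contribution $kx_2\,\partial_{r_2}F_2$ vanishes; likewise $\partial_{x_2}\dot x_2=kF_2+kx_2\,\partial_{x_2}F_2=4kx_2+k\phi_+(0,0)$, and $\partial_{\rho_2}\dot x_2=kx_2\,\partial_{\rho_2}F_2$. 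Only this last entry needs care, and it is the one place where the distinction between $k=1$ and $k\ge 2$ enters $\partial_{\rho_2}F_2$; fortunately it is multiplied by $kx_2$ and appears strictly below the diagonal, so it never affects the eigenvalues.

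Evaluation then finishes the spectral claims. At $p_f$ one has $x_2=0$, so the off-diagonal $\partial_{\rho_2}\dot x_2$ vanishes and $F_2=\phi_+(0,0)$; the Jacobian is lower triangular with diagonal $-\phi_+(0,0),\,(2k+1)\phi_+(0,0),\,k\phi_+(0,0)$, i.e. one negative and two positive eigenvalues, so $p_f$ is fully hyperbolic. The invariant axis $\{r_2=x_2=0\}$ carries the negative eigenvalue and gives $W^s=\{r_2=x_2=0,\ \rho_2\ge 0\}$, while the invariant plane $\{\rho_2=0\}$ carries the two positive eigenvalues and gives $W^u_{loc}$ as a neighborhood of the origin in that plane. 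At $p_a$ one has $F_2=0$ and $\partial_{x_2}\dot x_2=-k\phi_+(0,0)$, so the spectrum is $\{0,0,-k\phi_+(0,0)\}$: partial hyperbolicity with single nonzero eigenvalue $\lambda=-k\phi_+(0,0)<0$, and the center manifold theorem supplies the $2$-dimensional $M_1$ and the $1$-dimensional stable fiber. To describe $M_1$ I would restrict to the invariant planes: inside $\{r_2=0\}$ the equation $F_2(\rho_2,0,x_2)=0$ defines, by the implicit function theorem (using $\partial_{x_2}F_2=2\ne 0$), a smooth curve of equilibria through $p_a$ --- this is $S_1$ --- which is therefore contained in $M_1$; inside $\{\rho_2=0\}$ the planar restriction has eigenvalues $0$ and $-k\phi_+(0,0)$, center eigenvector $(0,k\phi_+(0,0),1)^T$, and outgoing center flow ($\dot r_2\sim c\,r_2^2>0$), so $p_a$ is a saddle-node there. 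Combining the line of equilibria on one side with the outgoing center direction on the other exhibits $p_a$ as a nonhyperbolic saddle.

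The only nonroutine point, and the step I expect to require the most care, is the \emph{uniqueness} of the center manifold within $\{\rho_2=0\}$. I would establish it by the standard planar saddle-node argument: writing an invariant graph as $x_2+\tfrac12\phi_+(0,0)=h(r_2)$ and subtracting two such graphs, the difference $\eta$ solves a variational equation of the form $\eta'=-\tfrac{c}{r_2^2}\eta$ with $c>0$ near $r_2=0^+$, whence $\eta(r_2)\sim C\,e^{c/r_2}$. Boundedness as $r_2\to 0^+$ --- that is, tangency to the center eigenvector --- forces $C=0$, so the graph is unique. Because the blowup confines us to $r_2\ge 0$ and the center flow is directed outward, we only ever see this unique branch, which is precisely the assertion that there is a unique center manifold within $\{\rho_2=0\}$ along which $r_2$ increases.
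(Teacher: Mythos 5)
Your proposal is correct and takes the same route the paper leaves implicit (its proof is literally just ``Calculations.''): the Jacobian on the invariant set $\{\rho_2=r_2=0\}$ is lower triangular with diagonal $-F_2,\ (2k+1)F_2,\ 4kx_2+k\phi_+(0,0)$, which yields exactly the stated spectra at $p_f$ and $p_a$, and the invariant planes $\{\rho_2=0\}$ and $\{r_2=0\}$ identify the invariant manifolds as you describe. Your added detail on the uniqueness of the center manifold on the outgoing side $r_2\ge 0$ (the $\eta'=-c\eta/r_2^2$ argument) is a genuine and welcome completion of a point the paper asserts without justification.
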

\begin{proof}
Calculations.
\end{proof}

\subsection{Proof of \thmref{main1}(b) and (c)}\seclab{main1bc}
In chart $(\bar \epsilon=1)_2$, the set defined by the following equation
\begin{align}
 \rho_2^{2k+1}r_2^{2k} = \epsilon,\eqlab{conservation}
\end{align}
where $\epsilon$ is the original small parameter, is invariant. This follows from \eqref{epsilon12} and \eqref{r1epsilon1}. Therefore if we fix $\epsilon>0$ sufficiently small and restrict to the set defined by \eqref{conservation}, 
the local center manifold $M_1$ in \lemmaref{M1} provides an extension of the invariant manifold $S_\epsilon$ up to $r_2=\upsilon$, $\upsilon $ a {small constant}, in the usual way; see e.g. \cite{krupa_extending_2001}. At $r_2=\upsilon$ we have 
\begin{align*}
 \rho_2 = (\epsilon \upsilon^{-2k})^{1/(2k+1)},
\end{align*}
by \eqref{conservation},
and hence 
\begin{align*}
 y = \rho_2^{2k}r_2= (\epsilon \upsilon^{-2k})^{2k/(2k+1)}\upsilon.
\end{align*}
In fact, following the analysis of the standard, slow-fast, planar, regular fold point in \cite{krupa_extending_2001} we obtain a similar result to \cite[Proposition 2.8]{krupa_extending_2001} for the local transition map from $\rho_2=\nu$ to $r_2=\upsilon$ near $p_a$ that is exponentially contracting like $e^{-cr_2^{-1}}$ with $c>0$.  

Next, since $r_2$ is increasing on $M_1$, we may track the slow manifold across the sphere, using regular perturbation, Poincar\'e-Bendixson and the analysis in $(\bar r=1)_1$ in the previous subsection, up close to $p_R$. We then use \lemmaref{P1} and the mapping $P_R^C$ to describe the passage near $p_R$. Recall that $p_R$ is a stable node on the sphere, attracting every point on the quarter sphere $\bar \epsilon\ge 0$, $\bar r\ge 0$, except for certain subsets of the invariant half-circles $\bar r=0$ and $\bar \epsilon=0$. See \figref{visible2}. By the expression in \eqref{X1C}, and the following conservation 
\begin{align}
 \rho_1 = \left(\epsilon \nu^{-1}\right)^{1/(2k+1)}, \eqlab{conservation1}
\end{align}
in chart $(\bar r=1)_1$ at $\epsilon_1=\nu$ on $\Sigma_R^{\text{in},C}$, obtained by 
combining \eqref{epsilon12} and \eqref{r1epsilon1}, we reach the result on the slow manifold in \thmref{main1}(b). Also, combining the exponential contraction near $p_a$ in chart $(\bar \epsilon=1)_2$ with the algebraic contraction in \lemmaref{P1}, we obtain the contraction of the local map $Q\vert_K$ as detailed in \thmref{main1}(c). 
\subsection{Proof of \thmref{main1}(d)}\seclab{chini}
For the proof of \thmref{main1}(d), we focus on proving the estimates in (ii) and the inequality \eqref{concaveDown}. The estimates (i) and (iii) are simpler and we will only discuss these at the end of this section.

\qss{
The idea for (ii) is to first work in $(\bar r=1)_1$, applying the diffeomorphisms $\Psi_i^{-1}$ near $p_i$, $i=L,R$, respectively. The domain for $x$ in (ii) then allows us to apply $P_L^C$, recall \lemmaref{rho1x1eps1Tilde} and \lemmaref{X1C}, possibly after adjusting the relevant constants. Upon application of $\Psi_L$, this brings us up to $\epsilon_1=\nu$ and $x_1+1\in [-\theta,\theta]$ with $\theta<\eta$. From here we can -- following the analysis in the chart $(\bar \epsilon=1)_2$ -- guide the flow using regular perturbation theory up close to $p_R$, where we can apply (again after possibly adjusting relevant constants, in particular by decreasing $\theta$, recall \remref{theta}) $P_R^C\circ \Phi_R^{-1}$, see \lemmaref{P1}. We let $Q_1:\Sigma_L^{\text{in},C}\rightarrow \Sigma_R$ denote the resulting mapping:
\begin{align}
 Q_1 &= P_R^C \circ \Psi_R^{-1} \circ P_C \circ \Psi_L\circ P_L^C:\Sigma_L \rightarrow \Sigma_R,\eqlab{Q1comp}
\end{align}
with $P_C$ the diffeomorphism obtained from the finite time flow map of \eqref{r1Eqns} from $\Sigma_L^{\text{out},C}$ to $\Sigma_R^{\text{in},C}$, see \figref{barRhoEq1}. }
%
%

\qss{
There are two main issues regarding the proof of \thmref{main1}(d). We describe these in the following:
\begin{enumerate}
\item Clearly, the mapping $Q$ is conjugated to $Q_1$ on $\epsilon=\text{const}$. sections. (The difference between a section $\Sigma_i$ and $\Psi_i(\Sigma_i)$ for $i=L,R$ is regular -- in particular, it can be adjusted for by a simple application of the implicit function theorem -- and consequently, we will therefore ignore such differences in the following.) Our strategy for proving \thmref{main1} (d) is therefore to prove an analogously statement for the mapping $Q_1$ on $\epsilon=$ const. 
However, let (for good measure) $I_{RL}:\Sigma_L^{\text{out},C}\rightarrow \Sigma_R^{\text{in},C}$ be defined by $I_{LR}(\rho_1,x_1,\epsilon_1)=(\rho_1,x_1,\epsilon_1)$. Then a simple calculation, using \lemmaref{X1C} and \lemmaref{P1}, shows that 
\begin{align*}
\left(P_R^C\circ I_{RL}\circ P_L\right)(\rho_1,x_1,\epsilon_1) = \begin{pmatrix}
                                         \rho_1\\
                                         x_1+\mathcal O(\epsilon_1^{1/(2k+1)})\\
                                         \epsilon_1
                                        \end{pmatrix}
                                        \end{align*}
                                        and hence, not surprisingly,
the expansion of $P_L^C$ is precisely compensated by the contraction of $P_R^C$. Consequently, seeing that $\Psi_L$ and $\Psi_R$ are local, regular diffeomorphisms, the contractive properties of $Q_1$ are essentially given by $P_C$. We therefore need an accurate description of $P_C$ to finish the prove \thmref{main1}(d) (ii). We will provide this description in the following.
\item Secondly, we also need to ensure that for any $c>0$, we can pick the appropriate constants $\eta$, $\theta\in (0,\eta)$, $\delta$, and $\nu$ in \lemmaref{X1C} and \lemmaref{P1} such that $Q_x'(x,\epsilon)$ is monotone on the domain of (ii), attaining all values in the interval $(-1+c,c)$. This will be the purpose of the subsequent sections below. 
\end{enumerate}}

%

%
%

\subsubsection*{\qss{The mapping $P_C$}}

To study $P_C$ our arguments will be based upon regular perturbation theory, exploiting the conservation of $\rho_1^{2k+1}\epsilon_1=\epsilon$, and we therefore first consider the invariant $\rho_1=0$-subsystem:
\begin{align}
\dot x_1 &=1-\frac12 \left\{2x_1+\epsilon_1^k \phi_+\right\}x_1,\eqlab{x1eps1}\\
 \dot \epsilon_1 &=-\frac{2k+1}{2k}\left\{2x_1+\epsilon_1^k\phi_+\right\}\epsilon_1.\nonumber
\end{align}
Here and in the following, we will for simplicity write $\phi_+(0,0)$ as $\phi_+$. In \eqref{x1eps1} we have deliberately emphasized the brackets (using  \textit{curly} brackets) appearing in both equations. This allows us to write the equations in a simpler form.
\begin{lemma} \lemmalab{uvtransformation}
 The diffeomorphism defined by
 \begin{align}
  (x_1,\epsilon_1)\mapsto \begin{cases} u &= (\phi_+\epsilon_1^k)^{-1/(2k+1)} x_1,\\
  v&=(\phi_+\epsilon_1^k)^{-2/(2k+1)},\eqlab{uvc}
  \end{cases}
 \end{align}

for $\epsilon_1>0$, brings \eqref{x1eps1} into the following system
 \begin{align}
  \dot u &= v^{1/2}, \eqlab{uv}\\
  \dot v &= v^{1/2} \left(2u+v^{-k}\right).\nonumber
 \end{align}
\end{lemma}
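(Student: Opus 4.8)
The plan is to establish the lemma by direct computation, with the guiding observation that both equations in \eqref{x1eps1} are driven by the \emph{same} bracket $B := 2x_1 + \epsilon_1^k\phi_+$, and that the coordinates in \eqref{uvc} are engineered precisely to exploit this shared structure. Throughout I treat $\phi_+ = \phi_+(0,0)$ as a fixed positive constant; positivity is guaranteed by (A2), since $\phi_+$ takes values in $[c_2,c_3]$ with $c_2>0$. Before differentiating anything, I would record the two algebraic identities that make the change of variables transparent. Writing $w := \phi_+\epsilon_1^k$, the definition $v = w^{-2/(2k+1)}$ immediately gives $v^{1/2} = w^{-1/(2k+1)}$, so that the first line of \eqref{uvc} reads simply $u = v^{1/2}x_1$, i.e. $x_1 = uv^{-1/2}$; moreover $w = v^{-(2k+1)/2}$, whence $wv = v^{1/2-k} = v^{1/2}v^{-k}$. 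Thus $v$ depends on $\epsilon_1$ alone and $u$ is a $v^{1/2}$-rescaling of $x_1$.

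The first substantive step is to compute $\dot v$. Differentiating $w=\phi_+\epsilon_1^k$ along the flow and using the $\dot\epsilon_1$-equation of \eqref{x1eps1} gives $\dot w = k\epsilon_1^{-1}w\,\dot\epsilon_1 = -\tfrac{2k+1}{2}Bw$. Substituting this into $\dot v = -\tfrac{2}{2k+1}vw^{-1}\dot w$ produces the clean relation $\dot v = Bv$. Re-expressing the bracket in the new variables via $2x_1 = 2uv^{-1/2}$ and $wv = v^{1/2}v^{-k}$ then yields $\dot v = 2uv^{1/2} + v^{1/2}v^{-k} = v^{1/2}(2u + v^{-k})$, which is exactly the second line of \eqref{uv}.

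The second step is to compute $\dot u$ from $u = v^{1/2}x_1$. The chain rule gives $\dot u = \tfrac12 v^{-1/2}\dot v\,x_1 + v^{1/2}\dot x_1$. Feeding in $\dot v = Bv$ turns the first term into $\tfrac12 Bv^{1/2}x_1$, while the $\dot x_1$-equation of \eqref{x1eps1} contributes $v^{1/2}\bigl(1 - \tfrac12 Bx_1\bigr) = v^{1/2} - \tfrac12 Bv^{1/2}x_1$. The two copies of $\tfrac12 Bv^{1/2}x_1$ cancel exactly --- this cancellation is the entire point of the $v^{1/2}$ weighting in $u$ --- leaving $\dot u = v^{1/2}$, the first line of \eqref{uv}.

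Finally I would confirm that \eqref{uvc} is a genuine diffeomorphism from $\{\epsilon_1>0\}$ onto $\{v>0\}$. The map is smooth there, and it inverts explicitly: raising $v = (\phi_+\epsilon_1^k)^{-2/(2k+1)}$ to the power $-(2k+1)/2$ recovers $\epsilon_1 = \bigl(\phi_+^{-1}v^{-(2k+1)/2}\bigr)^{1/k}>0$, after which $x_1 = uv^{-1/2}$; both expressions are smooth for $v>0$. I do not expect a genuine obstacle in this lemma, since it is essentially bookkeeping with the fractional powers of $w=\phi_+\epsilon_1^k$. The only point demanding care is tracking those exponents consistently --- in particular the identities $v^{1/2}=w^{-1/(2k+1)}$ and $wv=v^{1/2-k}$ --- and recognizing conceptually that $u=v^{1/2}x_1$ acts as an integrating-factor rescaling whose sole purpose is to annihilate the nonlinear $-\tfrac12 Bx_1$ term in $\dot x_1$, collapsing the first equation to the trivial form $\dot u = v^{1/2}$.
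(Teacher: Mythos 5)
Your computation is correct and is exactly the "simple calculation" the paper invokes: the identities $u=v^{1/2}x_1$, $wv=v^{1/2}v^{-k}$, and $\dot v=Bv$ with $B=2x_1+\epsilon_1^k\phi_+$ give both lines of \eqref{uv}, with the $\tfrac12 Bv^{1/2}x_1$ terms cancelling in $\dot u$ as you note. Nothing is missing; this is the same direct-substitution argument the paper leaves to the reader.
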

 \begin{proof}
 Simple calculation.
 \end{proof}
To study \eqref{uv} we multiply the right hand side by $v^{-1/2}$:
\begin{align}
\dot u &=1,\eqlab{chini}\\
 \dot v &=2u+v^{-k},\nonumber
\end{align}

\begin{remark}
The equation \eqref{chini}, written as a first order system, is known as a Chini equation, see e.g. \cite{trifonov2011a}. To the best of the author's knowledge, no solution by quadrature is known to exist.  As a result, our analysis of this system is fairly indirect. 
\end{remark}
\begin{remark}
 Upon using \eqref{cyl_blowup_map1} and \eqref{epsilon11}, it is possible to write \eqref{uvc} as follows
 \begin{align*}
x &= (\phi_+\epsilon^k)^{1/(2k+1)}u,\\
y&=(\phi_+\epsilon^k)^{2/(2k+1)}v.
 \end{align*}
This is the ``appropriate scaling'' for 
\begin{align*}
 \dot x &=1,\\
 \dot y&=2x+\epsilon^k y^{-k} \phi_+,
\end{align*}
which is (in some sense) ``the leading order'' of the regularization of \eqref{ZpmNF} using a regularization function satisfying (A2). 
\end{remark}

In the $(u,v)$-coordinates, and for $\rho_1=0$, $\Sigma_L^{\text{out},C}$ and $\Sigma_R^{\text{in},C}$ both become subsets of $v=(\phi_+\nu^k)^{-2/(2k+1)}$, with $2u+v^{-k}<0$ and $2u+v^{-k}>0$ respectively. In the following, we let $\Sigma_L:=\{(u,v)\vert v=(\phi_+\nu^k)^{-2/(2k+1)},\,2u+v^{-k}<0\}$ and $\Sigma_R:=\{(u,v)\vert v=(\phi_+\nu^k)^{-2/(2k+1)},\,2u+v^{-k}>0\}$, for simplicity,  denote these extended sections within $v=(\phi_+\nu^k)^{-2/(2k+1)}$. Given the form of \eqref{chini}, we can therefore write the mapping $P_C$ for $\rho_1=0$ in the $(u,v)$-variables as a one-dimensional mapping
\begin{align*}
 u\mapsto U(u),
\end{align*}
such that $(u,v)\in \Sigma_L$ gets mapped to $(U(u),v)\in \Sigma_R$, see \figref{uvPlane}, 
where 
\begin{align}
 U(u) = T(u)+u,\eqlab{PCDefine}
\end{align}
$T(u)$ being the time of flight (see also \eqref{Tdefine} below for a formal definition). 
\begin{lemma}\lemmalab{PC}
The following holds
 \begin{align}
U'(u)\in (-1,0),\quad U''(u) <0,\eqlab{UC}
 \end{align}
for all $(u,v)\in \Sigma_L$ with $2u+v^{-k}<0$. 
\end{lemma}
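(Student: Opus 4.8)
The plan is to work directly with the flow of \eqref{chini}, parametrizing each orbit by time so that $u=u_0+t$ along the orbit through $(u_0,v_\ast)\in\Sigma_L$, where $v_\ast:=(\phi_+\nu^k)^{-2/(2k+1)}$ is the common height of $\Sigma_L$ and $\Sigma_R$. Writing $v(t;u_0)$ for this orbit, the exit time $T=T(u_0)>0$ is defined by $v(T;u_0)=v_\ast$ with $\dot v(T)>0$, so that $U(u_0)=u_0+T(u_0)$. I introduce $a:=\dot v=2u+v^{-k}$ and the first variation $V:=\partial_{u_0}v$. The key observation is that $a$ and $V$ solve the \emph{same} linear equation $\dot y=2-kv^{-k-1}y$ (since $\partial_{u_0}u=1$), only with different initial data $a(0)=a_0:=2u_0+v_\ast^{-k}<0$ and $V(0)=0$. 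Differentiating $v(T(u_0);u_0)\equiv v_\ast$ gives $a(T)T'+V(T)=0$, hence $U'=1+T'=1-V(T)/a(T)$, with $a(T)>0$ on $\Sigma_R$. Subtracting the two linear equations shows $V-a$ solves the homogeneous problem, so $V(t)-a(t)=|a_0|\Phi(t)$ with $\Phi(t):=\exp(-\int_0^t kv^{-k-1}\,ds)\in(0,1)$; in particular $V(T)>a(T)>0$, which already yields $U'(u_0)<0$.

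For the lower bound $U'(u_0)>-1$ I must show $V(T)<2a(T)$. The integrating-factor representations $a(t)=\Phi(t)\,[\,a_0+2\int_0^t\Phi^{-1}\,]$ and $V(t)=2\Phi(t)\int_0^t\Phi^{-1}$ reduce this to $|a_0|<\int_0^T\Phi(s)^{-1}\,ds$. Letting $t^\ast$ be the time of the minimum of $v$ (where $a=0$) and evaluating the formula for $a$ at $t^\ast$ gives the exact identity $\int_0^{t^\ast}\Phi^{-1}=|a_0|/2$, so it suffices to prove $\int_{t^\ast}^T\Phi^{-1}>\int_0^{t^\ast}\Phi^{-1}$. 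Since $\Phi^{-1}$ is strictly increasing, this follows from $T-t^\ast>t^\ast$, i.e. the ascending branch takes longer than the descending one. To establish that I compare the branches at equal height: writing $a_d(v)<0$ and $a_a(v)>0$ for the value of $a$ on the descending and ascending branch at height $v\in(v_{\min},v_\ast)$, both satisfy $da/dv=2/a-kv^{-k-1}$, so $b:=a_a+a_d$ solves the linear ODE $b'=\tfrac{2}{a_aa_d}\,b-2kv^{-k-1}$ with $b(v_{\min})=0$ and $a_aa_d<0$. If $b$ vanished at some $v_1>v_{\min}$ then $b'(v_1)=-2kv_1^{-k-1}<0$, which forbids a crossing from below; hence $b<0$, i.e. $a_a(v)<|a_d(v)|$ at every height. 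Comparing the travel times $t^\ast=\int_{v_{\min}}^{v_\ast}dv/|a_d|$ and $T-t^\ast=\int_{v_{\min}}^{v_\ast}dv/a_a$ then gives $T-t^\ast>t^\ast$, completing $U'\in(-1,0)$.

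For the concavity $U''(u_0)=T''(u_0)<0$ I continue to second order. The second variation $W:=\partial_{u_0}^2 v$ satisfies $\dot W=-kv^{-k-1}W+k(k+1)v^{-k-2}V^2$ with $W(0)=0$, hence $W>0$. Differentiating $v(T(u_0);u_0)\equiv v_\ast$ twice, and using $\ddot v=\dot a=2-kv^{-k-1}a$ and $\dot V=2-kv^{-k-1}V$, yields at $t=T$ the identity $a(T)\,T''=-E$, where $E:=\ddot v(T)(T')^2+2\dot V(T)T'+W(T)$. Substituting $T'=-V/a$ and $p:=kv_\ast^{-k-1}$ and clearing denominators gives $a^2E=2V^2+paV^2-4aV+a^2W$. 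Since $a(T)>0$, proving $U''<0$ is equivalent to proving $E>0$.

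This last inequality is the main obstacle. The difficulty is precisely that the bound already obtained forces $V\in(a,2a)$, so the term $-4aV$ makes $2V^2-4aV=2V(V-2a)$ negative, and this degeneration is sharpest in the near-tangency regime $a_0\to0^-$, where $a,V,W\to0$ together; a crude appeal to $W>0$ is therefore not enough and one must exploit the exact balance of the four terms. I expect to close it either (i) by inserting the integral representation $W(T)=\Phi(T)\int_0^T k(k+1)v^{-k-2}V^2\,\Phi^{-1}\,ds$ alongside those for $a$ and $V$ and estimating the resulting quadrature through the monotonicity of $\Phi^{-1}$, exactly as in the first-order step, or (ii) equivalently by showing that the exit ratio $R(u_0):=V(T)/a(T)$ is strictly increasing in $u_0$ (note $R\to2$ as $a_0\to0^-$ and $R\to1$ as $u_0\to-\infty$), since $U''=-R'/\,$ a positive factor, so that $R'>0$ is the same statement as $E>0$. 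The first two claims of \eqref{UC} are then immediate, and the concavity will follow once this uniform sign is secured.
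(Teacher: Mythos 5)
Your proof of the first claim in \eqref{UC} is correct and takes a genuinely different route from the paper's. Both arguments reduce $U'(u_0)>-1$ to the same inequality $v'_u(T(u_0),u_0)<2v'_t(T(u_0),u_0)$ (the paper's \eqref{ineqFinal}, your $V(T)<2a(T)$), exploiting that $\dot v$ and the first variation solve the same linear equation. But where the paper argues by contradiction --- tracking the zero $t_*(u_0)$ of the auxiliary solution $w=2\dot v - v_1$ and using the equation for $w_1$ to rule out a first crossing $T(u_*)=t_*(u_*)$ --- you integrate the linear equations explicitly, reduce the inequality to $|a_0|<\int_0^T\Phi^{-1}$, and verify it via the exact identity $\int_0^{t^*}\Phi^{-1}=|a_0|/2$ together with the time-asymmetry $T-t^*>t^*$ of the descending and ascending branches. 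This part is, if anything, more transparent than the paper's. One small repair: your sign argument for $b=a_a+a_d$ starts at $v=v_{\min}$, where $b(v_{\min})=0$ and the coefficient $2/(a_aa_d)$ blows up, so you still need to pin down the sign of $b$ just above $v_{\min}$; this is easiest to sidestep entirely by integrating $\tfrac{d}{dv}\bigl(a^2/2\bigr)=2-kv^{-k-1}a$ from $v_{\min}$ on each branch, which gives $a_a^2<4(v-v_{\min})<a_d^2$ directly.

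The concavity claim is where the proposal is incomplete, and you say so yourself: you reduce $U''<0$ to $E>0$ with $a^2E=2V^2+paV^2-4aV+a^2W$ and then only sketch two strategies for obtaining the sign. That is a genuine gap --- the second inequality in \eqref{UC} is not established. It is worth being explicit about how this compares with the paper: the paper's proof of this half is one line, resting on the formula \eqref{T2Expr}, in which the bracket $2P_C'(P_C'-1)+v_2(T)$ is manifestly positive once $P_C'\in(-1,0)$ and $v_2>0$ are known. Your expansion of the same second derivative is the one I can reproduce from differentiating \eqref{Tdefine} twice (namely $a\,T''=-[\dot a(T')^2+2\dot V T'+W]$ with $\dot a=2-pa$, $\dot V=2-pV$ at exit), and it differs from \eqref{T2Expr} by the terms $-2aV+paV^2$: in the paper's bracket the cross term appears as $-2aV$ rather than $-4aV$, which is precisely what makes its sign immediate there. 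So you cannot simply import the paper's conclusion, and your observation that $2V(V-2a)<0$ throughout the admissible range $V\in(a,2a)$ shows the sign is genuinely delicate, sharpest in the near-tangency regime $a_0\to0^-$. Until you either reconcile your expansion with \eqref{T2Expr} or actually carry out one of your proposed arguments --- your route (ii), showing that $R=V(T)/a(T)$ is increasing in $u_0$ by reusing the integral representations from the first part, looks the most promising --- the concavity statement remains unproven.
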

\qss{We delay the proof to the end of this \secref{proof1}. Fix $\zeta>0$ small enough. Then upon returning to $x_1$ using \eqref{uvc}, we obtain the following expression for $P_C$ for $\rho_1=0$:
\begin{align*}
 P_C(0,x_1,\nu) = \begin{pmatrix}
                   0\\
                   X_{C,0}(x_1)\\
                   \nu
                  \end{pmatrix}
\end{align*}
where
\begin{align}
 X_{C,0}(x_1)=(\phi_+ \nu^k)^{1/(2k+1)} U((\phi_+\nu^k)^{-1/(2k+1)} x_1),\eqlab{XC0}
\end{align}
for $x_1\in [-1-\zeta,-1+\zeta]$. By \lemmaref{PC} we have the following:
\begin{lemma}\lemmalab{nu0}
For any $c>0$, there exists a $\nu_0$ such that for any $\nu\in (0,\nu_0]$ we have 
\begin{align*}
 X_{C,0}'(x_1) &\in (-1+c,c),\quad 
 X_{C,0}''(x_1)<0,
\end{align*}
for all $x_1\in [-1-\zeta,-1+\zeta]$.
 \end{lemma}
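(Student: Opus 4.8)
The plan is to read off \lemmaref{nu0} from \lemmaref{PC} by differentiating the explicit relation \eqref{XC0}. Set $a:=(\phi_+\nu^k)^{1/(2k+1)}>0$, so that $a^{-1}=(\phi_+\nu^k)^{-1/(2k+1)}=v_*^{1/2}$ with $v_*:=(\phi_+\nu^k)^{-2/(2k+1)}$, and \eqref{XC0} reads $X_{C,0}(x_1)=a\,U(a^{-1}x_1)$. The chain rule then gives the two identities
\begin{align*}
 X_{C,0}'(x_1)=U'(a^{-1}x_1),\qquad X_{C,0}''(x_1)=a^{-1}\,U''(a^{-1}x_1),
\end{align*}
the prefactor $a$ and the inner derivative $a^{-1}$ cancelling exactly in the first identity. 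Throughout I would first check that, for $x_1\in[-1-\zeta,-1+\zeta]$ and $\nu$ small, the argument $a^{-1}x_1=v_*^{1/2}x_1\approx-v_*^{1/2}$ lies in the domain $\{2u+v_*^{-k}<0\}$ of $\Sigma_L$ on which \lemmaref{PC} applies; this is immediate since $v_*^{1/2}x_1<-\tfrac12 v_*^{-k}$ once $v_*$ is large.

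The second derivative is then the easy half: since $a^{-1}>0$ and $U''<0$ on $\Sigma_L$ by \lemmaref{PC}, the identity $X_{C,0}''=a^{-1}U''$ yields $X_{C,0}''(x_1)<0$ directly, with no smallness of $\nu$ needed beyond the domain check. Likewise the upper half of the first inequality is free: $U'<0$ gives $X_{C,0}'(x_1)=U'(a^{-1}x_1)<0<c$.

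The substance of the lemma is therefore the uniform lower bound $X_{C,0}'(x_1)=U'(a^{-1}x_1)>-1+c$ for all $x_1\in[-1-\zeta,-1+\zeta]$ once $\nu\le\nu_0(c)$. For this I would work with a variational representation of $U'$: writing $u_1=U(u_0)$ and differentiating the defining relation $v(u_1;u_0)=v_*$ of the return map, where $v(\cdot;u_0)$ is the orbit of \eqref{chini} through $(u_0,v_*)$, one obtains
\begin{align*}
 U'(u_0)=\frac{2u_0+v_*^{-k}}{2u_1+v_*^{-k}}\,\exp\Big(-\!\int_{u_0}^{u_1}k\,v^{-k-1}\,du\Big),
\end{align*}
in which the first factor is negative while the exponential lies in $(0,1)$. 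I would then estimate both factors uniformly over the relevant base points $u_0=v_*^{1/2}x_1$ as $\nu\to0$, using the time-of-flight control already established in the (delayed) proof of \lemmaref{PC}, to conclude that $U'$ stays in a compact subinterval of $(-1,0)$ bounded away from $-1$ by a margin exceeding $c$ for $\nu$ small.

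The main obstacle is precisely this last uniformity, and it is delicate because the base point $u_0=v_*^{1/2}x_1$ recedes to infinity as $\nu\to0$, so the separation of $U'(u_0)$ from $-1$ must be shown to persist in that limit rather than degenerate. Concretely, one has to rule out the regime in which the exponential factor tends to $1$ while the slope ratio tends to $-1$; the decisive quantity is the lower bound on $\int_{u_0}^{u_1}k\,v^{-k-1}\,du$, governed by how deeply the orbit through $(u_0,v_*)$ dips toward small $v$ (where $v^{-k-1}$ is large) before returning. Controlling this integral from below uniformly in the base point — equivalently, ensuring the orbits attached to the admissible range of $x_1$ all interact with the $v^{-k}$ barrier strongly enough — is the technical heart of the argument and is where I expect the bulk of the work, and the careful choice of $\zeta$ and $\nu_0$, to lie.
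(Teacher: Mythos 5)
Your chain-rule reduction $X_{C,0}'(x_1)=U'(a^{-1}x_1)$, $X_{C,0}''(x_1)=a^{-1}U''(a^{-1}x_1)$ is correct, the easy halves ($X_{C,0}''<0$ and $X_{C,0}'<0<c$, straight from \lemmaref{PC}) are fine, and your variational identity for $U'$ is also correct. The genuine gap is in what you yourself identify as the technical heart: the uniform lower bound $U'(v_*^{1/2}x_1)>-1+c$ for all $x_1\in[-1-\zeta,-1+\zeta]$ and all $\nu\le\nu_0$ is \emph{false}, so your program cannot be completed. At the right endpoint $x_1=-1+\zeta$ the orbit of \eqref{chini} through $(u_0,v_*)$, $u_0=-(1-\zeta)v_*^{1/2}$, satisfies $v\ge v_*+u^2-u_0^2\ge \zeta(2-\zeta)v_*$ throughout its passage, so your exponential factor is $1+\mathcal O(v_*^{-k-1/2})$ and the exit point is $u_1=-u_0+\mathcal O(v_*^{-k})$; hence $U'(u_0)\rightarrow -1$ as $\nu\rightarrow 0$. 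The regime you propose to ``rule out'' (exponential tending to $1$ while the slope ratio tends to $-1$) therefore genuinely occurs on the fixed domain of the lemma — and the paper's own proof asserts exactly this: at $x_1=-1+\zeta$ the derivative ``can be made as close to $-1$ as desired'', coming from the regular limit on the blowup sphere, where the variational equation $\epsilon_1'(x)=\epsilon_1 x/(x^2-1)$ has odd right-hand side and linearized transition map $x_1\mapsto -x_1$.

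What the lemma must deliver — and what the paper's proof establishes and later consumes in \thmref{main1}(d)(ii) and \lemmaref{final1d} — is not $\nu$-uniform containment of $X_{C,0}'$ in a fixed subinterval of $(-1,0)$ but a monotone \emph{sweep}: at the right endpoint $X_{C,0}'\rightarrow -1$ as just described; at the left endpoint $x_1=-1-\zeta$ the orbit dips down to the slow manifold $v\approx(-2u)^{-1/k}$, along which your integral $\int k\,v^{-k-1}\,du$ diverges as $\nu\rightarrow 0$ (this is the exponential contraction toward the center manifold of $p_a$ in \lemmaref{M1}(1), recall \secref{main1bc}), so $X_{C,0}'\rightarrow 0^-$ there; combined with $U'\in(-1,0)$ and $U''<0$ from \eqref{UC}, the derivative $X_{C,0}'$ then decreases monotonically through every value of $(-1+c,-c)$. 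Attaining slopes arbitrarily close to $-1$ is precisely what later yields the unique solution of $Q_x'(x,\epsilon)=\upsilon$ and hence the saddle-node, so a proof that kept $X_{C,0}'$ away from $-1$ would defeat the purpose even if it were true. Your variational formula is in fact a perfectly serviceable vehicle for both endpoint asymptotics; what must change is the goal at the right endpoint — show the exponential tends to $1$ and the ratio to $-1$, rather than bounding them away — and the two endpoint limits must then be propagated to the whole interval via the concavity $X_{C,0}''<0$, as in the paper.
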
}
\begin{proof}
\qss{
Consider the point on $\Sigma_L^{\text{out},C}$ with $x_1=-1+\zeta$. Then upon decreasing $\nu>0$, a simple calculation shows  that $X_{C,0}'(x_1)$ can be made as close to $-1$ as desired. Basically, the invariance of $\epsilon_1=0$, $x_1\in (-1,1)$ leads us to consider the variational equations. This produces an equation of the form $\epsilon_1'(x) = \epsilon_1 x/(x^2-1)$ upon eliminating time. Using that the right hand side is an odd function in $x_1$, we obtain that the linearized map is $x_1\mapsto -x_1$. }

\qss{On the other hand, if we consider a point $x_1=-1-\zeta$, then upon decreasing $\nu>0$ we can follow the flow by a finite time flow map, working in the charts $(\bar x=-1)_3$ and $(\bar \epsilon=1)_2$ successively, up close to the center manifold of $p_a$, recall \figref{barRhoEq1} and \lemmaref{M1} item (1). The consequence of the contraction towards this manifold is that $X'_{C,0}(x_1)\rightarrow 0^-$ as $\nu\rightarrow 0$, see also \secref{main1bc} above. Using \eqref{UC}, we therefore have that for any $c>0$ that there exists a $\nu_0$ such that for any $\nu\in (0,\nu_0]$ we have
\begin{align*}
 X_{C,0}'(x_1) &\in (-1+c,-c),\quad 
 X_{C,0}''(x_1)<0,
\end{align*}
for all $x_1\in [-1-\zeta,-1+\zeta]$, as desired.}
\end{proof}


\qss{By regular perturbation theory (seeing that $P_C$ is obtained from a finite time flow map) we have the following
\begin{align*}
 P_C(\rho_1,x_1,\nu) = \begin{pmatrix}
                        \rho_1\\
                        X_{C}(\rho_1,x_1)\\
                       \nu
                       \end{pmatrix}
\end{align*}
with $X_C$ smooth:
\begin{align}
 X_C(\rho_1,x_1) = X_{C,0}(x_1)+\mathcal O(\rho_1).\eqlab{XCrho}
\end{align}}

\subsubsection*{\qss{Restricting to invariant $\epsilon=$ const. sections}}
\qss{By restricting to the invariant sets defined by $\rho_1^{2k+1}\epsilon_1=\epsilon$, each of the mappings $P_R^C\circ \Psi_R^{-1}$, $P_C$ and $\Psi_L \circ P_L^C$ become one-dimensional. 
To simplify notation, we define the following functions
\begin{align*}
 X_{R,\epsilon}^C(x_1)&:= X_R^C\left(\left(\epsilon \nu^{-1}\right)^{1/(2k+1)},x_1\right),\\
 \mathcal X_{R,\epsilon}(x_1)&:= \mathcal X_R\left(\left(\epsilon \nu^{-1}\right)^{1/(2k+1)},x_1,\nu\right),\\
 X_{C,\epsilon}(x_1)&:=X_C(\left(\epsilon \nu^{-1}\right)^{1/(2k+1)},x_1),\\
\widetilde{\mathcal X}_{L,\epsilon}(x_1)&:= \widetilde{\mathcal X}_L\left(\left(\epsilon \nu^{-1}\right)^{1/(2k+1)}, x_1,\nu\right),\\
X_{L,\epsilon}^C(x_1)&:=X_L^C\left(x_1,\epsilon \delta^{-(2k+1)/2k}\right),
\end{align*}
Recall the definitions of $X_L^C$ and $X_R^C$ in \lemmaref{X1C} and \lemmaref{P1}, respectively, and the definitions of $\widetilde{\mathcal X}_L$ and ${\mathcal X}_R$ in \eqref{tildex1x} and \eqref{x1PsiRinv}.
In this way,
%
\begin{align}
 x_1 &\mapsto \left(X_{R,\epsilon}^C \circ \mathcal X_{R,\epsilon}\right)(x_1),\eqlab{1dmappings1}\\
 x_1&\mapsto X_{C,\epsilon}(x_1),\eqlab{1dmappings2}\\
 x_1 &\mapsto \left(\widetilde{\mathcal X}_{L,\epsilon} \circ X_{L,\epsilon}^C\right)(x_1)\eqlab{1dmappings3}, 
\end{align}
%
each coincide with the $x_1$ components of 
\begin{align*}
 &x_1\mapsto \left(P_R^C\circ \Psi_R^{-1}\right)\left(\left(\epsilon \nu^{-1}\right)^{1/(2k+1)},x_1,\nu\right)\in \Sigma_L^{\text{out},C},\\
  &x_1\mapsto P_C\left(\left(\epsilon \nu^{-1}\right)^{1/(2k+1)},x_1,\nu\right)\in \Sigma_R^{\text{in},C},\\
 &x_1\mapsto \left(\Psi_L\circ P_L^C\right)\left(\delta^{1/2k},x_1,\epsilon \delta^{-(2k+1)/2k}\right)\in \Sigma_R^{\text{out}},
\end{align*}
respectively. Notice that the domain for \eqref{1dmappings3} -- following \eqref{sigmaLinC} -- is 
\begin{align}
 x_1 \in I_L^C:=[-\theta \chi^{-1} \epsilon^{2k/(2k+1)} ,\theta \chi^{-1} \epsilon^{2k/(2k+1)}].\eqlab{domainhere}
\end{align}
where
\begin{align}
\chi:= \delta \nu^{2k/(2k+1)},\eqlab{clR} 
\end{align}}

\qss{
The functions $X_{R,\epsilon}^C$, $\mathcal X_{R,\epsilon}$, $X_{C,\epsilon}$, $\widetilde{\mathcal X}_{L,\epsilon}$ and $\epsilon^{2k/(2k+1)}X_{L,\epsilon}^C$ (seeing that $X_{L,\epsilon}^C$ itself is singular as $\epsilon\rightarrow 0$) are each $C^2$ with respect to $x_1$, depending continuously on the small parameter $\epsilon\in [0,\epsilon_0)$. 
In particular,
the expressions in \eqref{X1C} and \eqref{XLC} give the following asymptotics for $\epsilon\rightarrow 0$:
\begin{align}
 X_{R,\epsilon}^C(x_1) &= \chi^{-1}\epsilon^{2k/(2k+1)} x_1+\mathcal O(\epsilon),\eqlab{XRepsAsymp}\\
 X_{L,\epsilon}^C(x_1)&=\chi \epsilon^{-2k/(2k+1)} x_1 + \mathcal O(\epsilon^{1/(2k+1)}),\eqlab{XLepsAsymp}
\end{align}
%
upon using $\rho_1=(\epsilon \nu^{-1})^{1/(2k+1)}$ on $\Sigma_R^{\text{in},C}$ and $\epsilon_1=\epsilon \delta^{-(2k+1)/2k}$ on $\Sigma_{L}^{\text{in},C}$, respectively.
On the other hand, by \eqref{XCrho}:
\begin{lemma}\lemmalab{XC1}
The following estimate holds
\begin{align*}
 X_{C,\epsilon}(x_1) = X_{C,0}(x_1) +\mathcal O(\epsilon^{1/(2k+1)}).
\end{align*}
\end{lemma}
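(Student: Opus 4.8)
The plan is to obtain this estimate by direct substitution into the smooth expansion \eqref{XCrho}, so that essentially no new analytical work is required beyond recognizing the role of the fixed constant $\nu$. Recall that $X_{C,\epsilon}$ was \emph{defined} by restricting $X_C$ to the invariant set $\rho_1^{2k+1}\epsilon_1=\epsilon$ at $\epsilon_1=\nu$, that is,
\begin{align*}
 X_{C,\epsilon}(x_1) = X_C\!\left(\left(\epsilon \nu^{-1}\right)^{1/(2k+1)},x_1\right).
\end{align*}
The first and only substantive step is therefore to insert $\rho_1=\left(\epsilon \nu^{-1}\right)^{1/(2k+1)}$ into the expansion $X_C(\rho_1,x_1)=X_{C,0}(x_1)+\mathcal O(\rho_1)$ furnished by regular perturbation theory in \eqref{XCrho}.

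To justify that the remainder is genuinely uniform, I would note that since $P_C$ is a finite-time flow map, $X_C(\rho_1,x_1)$ is jointly smooth in $(\rho_1,x_1)$ for $x_1\in[-1-\zeta,-1+\zeta]$ and $\rho_1$ in a compact interval containing $0$. Hence the remainder in \eqref{XCrho} has the form $\rho_1 R(\rho_1,x_1)$ with $R$ continuous, and thus bounded on this compact set; this yields an $\mathcal O(\rho_1)$ bound uniform in $x_1$, which is exactly what is needed.

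The key observation — and the entirety of the argument — is that $\nu>0$ is a \emph{fixed} constant, independent of $\epsilon$, already pinned down in \lemmaref{X1C} and \lemmaref{P1}. Consequently
\begin{align*}
 \rho_1 = \left(\epsilon \nu^{-1}\right)^{1/(2k+1)} = \nu^{-1/(2k+1)}\,\epsilon^{1/(2k+1)} = \mathcal O(\epsilon^{1/(2k+1)}),
\end{align*}
and substituting into \eqref{XCrho} gives $X_{C,\epsilon}(x_1)=X_{C,0}(x_1)+\mathcal O(\epsilon^{1/(2k+1)})$, as claimed. There is no real obstacle here: all of the genuine content has been absorbed into \eqref{XCrho} (regular perturbation in $\rho_1$) and into \lemmaref{PC} and \lemmaref{nu0} (which govern $X_{C,0}$ itself). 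Should one want the estimate in the $C^2$ topology in $x_1$, consistent with the surrounding assertion of $C^2$ dependence, the identical substitution applies verbatim, since the joint smoothness of $X_C$ supplies the analogous uniform $\mathcal O(\rho_1)$ bounds for $\partial_{x_1}X_C$ and $\partial_{x_1}^2 X_C$.
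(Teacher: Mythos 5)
Your proposal is correct and coincides with the paper's own (essentially one-line) argument: the lemma follows by substituting $\rho_1=(\epsilon\nu^{-1})^{1/(2k+1)}=\mathcal O(\epsilon^{1/(2k+1)})$, with $\nu$ fixed, into the regular-perturbation expansion \eqref{XCrho}. Your added remarks on uniformity via compactness and on the derivatives match the paper's note that the order of the remainder is unchanged upon differentiation with respect to $x_1$.
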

In these expressions, the order of the remainder does not chance upon differentiation with respect to $x_1$. }


%

\qss{Analogously to the one-dimensional versions \eqref{1dmappings1}, \eqref{1dmappings2} and \eqref{1dmappings3} of $P_R^C\circ \Psi_R^{-1}$, $P_C$ and $\Psi_L\circ P_L^C$, respectively, the mapping $Q_1$ also becomes one-dimensional on the invariant sets defined by $\rho_1^{2k+1}\epsilon_1=\epsilon$:
\begin{align}
 x_1\mapsto X_\epsilon(x_1):=\left(X_{R,\epsilon}^C\circ \mathcal X_{R,\epsilon} \circ X_{C,\epsilon}\circ \widetilde{\mathcal X}_{L,\epsilon}\circ X_{L,\epsilon}^C\right)(x_1),\eqlab{QDefined} 
\end{align}
for all $x_1\in I_L^C$,
the right hand side of
\eqref{QDefined} being the $x_1$-entry of
\begin{align*}
 Q_1\left(\delta^{1/2k},x_1,\epsilon \delta^{-(2k+1)/2k}\right) = \begin{pmatrix}
                                                                   \delta^{1/2k}\\
                                                                   X_\epsilon(x_1)\\
                            \epsilon \delta^{-(2k+1)/2k}                                       
                                                                  \end{pmatrix},
\end{align*}}
 


\qss{
\begin{lemma}\lemmalab{Q111}
Consider any $c>0$, then there exist constants $\eta$, $\theta \in (0,\eta)$, $\delta_1$, and $\nu_1$ such that for any $\delta\in (0,\delta_1]$, $\nu\in (0,\nu_1]$ and all $x_1\in I_L^C$ the first and second derivative of the mapping \eqref{QDefined} satisfy
\begin{align}
 X_\epsilon'(x_1) &=\left(1+\mathcal O(c)\right)(X_{C,\epsilon})_{x_1}',\nonumber\\
 X_\epsilon''(x_1) &=\left(1+\mathcal O(c)\right) \chi \epsilon^{-2k/(2k+1)} ( X_{C,\epsilon})_{x_1x_1}'' +\mathcal O(1),\eqlab{Xc11}
\end{align}
respectively, for all $0\le \epsilon\ll 1$. 
Here $(X_{C})_{x_1}'$ and $(X_C)''_{x_1x_1}$ are each evaluated at the value of the right hand side in \eqref{1dmappings3}.
%
%
\end{lemma}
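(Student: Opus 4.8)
The plan is to compute the first two derivatives of the one-dimensional map \eqref{QDefined}, $X_\epsilon = X_{R,\epsilon}^C\circ \mathcal X_{R,\epsilon}\circ X_{C,\epsilon}\circ \widetilde{\mathcal X}_{L,\epsilon}\circ X_{L,\epsilon}^C$, directly by the chain rule, feeding in the component estimates already in hand. The relevant facts are: by \eqref{XLepsAsymp}, $X_{L,\epsilon}^C$ is, in $C^2$, the large linear expansion $x_1\mapsto \chi\epsilon^{-2k/(2k+1)}x_1$ up to $\mathcal O(\epsilon^{1/(2k+1)})$; by \eqref{XRepsAsymp}, $X_{R,\epsilon}^C$ is the reciprocal linear contraction $x_1\mapsto \chi^{-1}\epsilon^{2k/(2k+1)}x_1$ up to $\mathcal O(\epsilon)$; the gluing maps $\widetilde{\mathcal X}_{L,\epsilon}$ and $\mathcal X_{R,\epsilon}$ are regular $C^2$ diffeomorphisms whose first derivatives can be made $1+\mathcal O(c)$ on the relevant (shrinking) domains by taking $\theta$ and the other constants small, recall \remref{theta}, with uniformly bounded second derivatives; and the central map $X_{C,\epsilon}$ has, by \lemmaref{nu0} and \lemmaref{XC1}, first derivative in a bounded interval and strictly negative second derivative, each $\mathcal O(\epsilon^{1/(2k+1)})$-close to the $\epsilon=0$ expressions.

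For the first derivative I would simply multiply the five slopes, evaluated at the successive intermediate points. The essential point is the exact compensation $\chi\epsilon^{-2k/(2k+1)}\cdot\chi^{-1}\epsilon^{2k/(2k+1)}=1$, so that the product of the slopes of $X_{L,\epsilon}^C$ and $X_{R,\epsilon}^C$ equals $1+\mathcal O(\epsilon^{1/(2k+1)})$ (using the $C^2$ remainders and $\chi$ from \eqref{clR}). The two gluing slopes contribute a factor $1+\mathcal O(c)$, and the remaining factor is $(X_{C,\epsilon})_{x_1}'$ evaluated at the image of $x_1$ under $\widetilde{\mathcal X}_{L,\epsilon}\circ X_{L,\epsilon}^C$, i.e.\ at the value of the right hand side of \eqref{1dmappings3}. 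This gives the first line of \eqref{Xc11}.

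For the second derivative I would iterate the chain rule (Fa\`a di Bruno for a fivefold composition). The dominant contribution comes from the curvature of the central map: its second derivative is amplified by the square of the inner expansion slope, $(\chi\epsilon^{-2k/(2k+1)})^2$, and damped by the outer contraction slope, $\chi^{-1}\epsilon^{2k/(2k+1)}$, for a net weight $\chi\epsilon^{-2k/(2k+1)}$, yielding the leading term $(1+\mathcal O(c))\chi\epsilon^{-2k/(2k+1)}(X_{C,\epsilon})_{x_1x_1}''$. Every other term is subdominant: the bounded second derivatives of the two gluing diffeomorphisms enter at the same net weight $\chi\epsilon^{-2k/(2k+1)}$ but multiply the bounded factors $(X_{C,\epsilon})_{x_1}'$ rather than the dominant curvature, so, after choosing $\nu$ small enough that $|(X_{C,\epsilon})_{x_1x_1}''|$ is large relative to these bounded curvatures (recall the scaling $(X_{C,0})_{x_1x_1}''=(\phi_+\nu^k)^{-1/(2k+1)}U''$ implicit in \eqref{XC0}), they are absorbed into the factor $1+\mathcal O(c)$; the remainder generated by the $\mathcal O(\epsilon)$ curvature of the contraction $X_{R,\epsilon}^C$ against the squared cumulative slope $\mathcal O(\epsilon^{-4k/(2k+1)})$ is $\mathcal O(\epsilon^{(1-2k)/(2k+1)})=o(\epsilon^{-2k/(2k+1)})$, and the curvature of $X_{L,\epsilon}^C$ contributes only $\mathcal O(\epsilon)$. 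Collecting the subdominant pieces into the additive remainder gives the second line of \eqref{Xc11}.

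The main obstacle is the bookkeeping of the second-derivative expansion: one must show that, among the many chain-rule terms, exactly one survives at the scale $\chi\epsilon^{-2k/(2k+1)}|(X_{C,\epsilon})_{x_1x_1}''|$, and that all competing terms are either a relative $\mathcal O(c)$ perturbation of it (here both the compensation $\chi\epsilon^{-2k/(2k+1)}\cdot\chi^{-1}\epsilon^{2k/(2k+1)}=1$ and the largeness of the central curvature for small $\nu$ are used) or strictly lower order in $\epsilon$. Getting the order of the additive remainder right -- confirming that the contraction's $\mathcal O(\epsilon)$ curvature times the $\mathcal O(\epsilon^{-4k/(2k+1)})$ squared cumulative slope stays below the leading scale -- is the delicate step; everything else is a direct substitution of the component estimates from \lemmaref{X1C}, \lemmaref{P1}, \lemmaref{nu0} and \lemmaref{XC1}.
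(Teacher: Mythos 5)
Your proposal is correct and follows essentially the same route as the paper's (very terse) proof: a chain-rule computation using the exact compensation of the expansion of $X_{L,\epsilon}^C$ by the contraction of $X_{R,\epsilon}^C$, the $C^2$ $\mathcal O(c)$-closeness of the gluing diffeomorphisms to their linearizations on shrinking domains, and the lower bound on $\vert (X_{C,\epsilon})''_{x_1x_1}\vert$ to absorb the competing curvature terms. Your bookkeeping of the subdominant second-derivative contributions (in particular the $\mathcal O(\epsilon^{(1-2k)/(2k+1)})=o(\epsilon^{-2k/(2k+1)})$ term from the curvature of $X_{R,\epsilon}^C$) is a correct and more explicit version of what the paper summarizes as ``a simple calculation using the chain rule.''
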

\begin{proof}
The result follows from a simple calculation using the chain rule on \eqref{QDefined} and the fact that $\widetilde{\mathcal X}_{L}$ and $\widetilde{\mathcal X}_{R}$, given in \eqref{tildex1x} and \eqref{x1PsiR}, are smooth and satisfy $\widetilde{\mathcal X}_{L}(0,0,0)=-1$, $\left(\widetilde{\mathcal X}_{L}\right)'_{x_1}(0,0,0)=1$ and $\widetilde{\mathcal X}_{R}(0,0,0)=1$, $\left(\widetilde{\mathcal X}_{R}\right)'_{x_1}(0,0,0)=1$. Taking $\eta$, $\delta_1$ and $\nu_1$ small enough, we can for any $\delta\in (0,\delta_1]$, $\nu\in (0,\nu_1]$ therefore ensure that ${\mathcal X}_{R,\epsilon}$ and $\widetilde{\mathcal X}_{L,\epsilon}$ are each $C^2$ $\mathcal O(c)$-close to their linearizations for all $0\le \epsilon\ll 1$. We take $\theta\in (0,\eta)$ small enough to ensure that $Q_1$ is well-defined. 
\end{proof}}

\begin{figure}
\begin{center}
\includegraphics[width=.85\textwidth]{./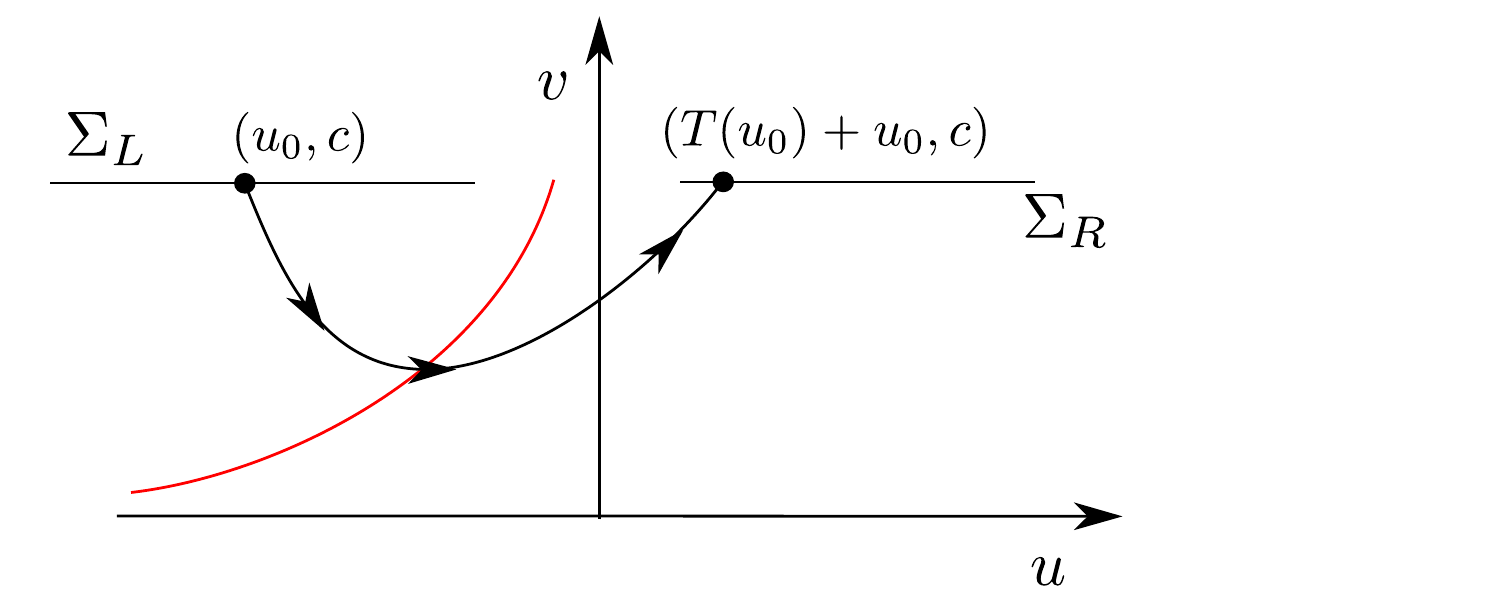} \end{center}
 \caption{Dynamics within the $(u,v)$-plane. We describe the transition map from $\Sigma_L$ and $\Sigma_R$ using a simple analysis of the variational equations, which in the $(u,v)$-variables, take a simple form.}
 \figlab{uvPlane}
              \end{figure}

\subsubsection*{\qss{Finishing the proof of \thmref{main1}(d)}}
\qss{
The result in \thmref{main1}(d) (ii) now follows upon combining \lemmaref{XC1} and \lemmaref{Q111} using \lemmaref{nu0}. Indeed, we have
\begin{lemma}\lemmalab{final1d}
 For any $c>0$ there exist constants $\eta$, $\theta\in (0,\eta)$, $\delta$ and $\nu$ such that the following holds for all $x_1\in I_L^C$ 
\begin{align*}
 X_{\epsilon}'(x_1)\in (1-c,c),\,X''_\epsilon(x_1) <-c^{-1},
\end{align*}
and all $0<\epsilon\ll 1$.
\end{lemma}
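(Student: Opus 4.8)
The plan is to let \lemmaref{Q111} carry the structural load and then read both estimates off the single ``Chini'' map $X_{C,\epsilon}$, the outer diffeomorphisms contributing only a controllable $(1+\mathcal O(c))$ factor. By \lemmaref{Q111},
\[
X_\epsilon'(x_1)=\left(1+\mathcal O(c)\right)(X_{C,\epsilon})_{x_1}',\qquad
X_\epsilon''(x_1)=\left(1+\mathcal O(c)\right)\chi\epsilon^{-2k/(2k+1)}(X_{C,\epsilon})_{x_1x_1}''+\mathcal O(1),
\]
the derivatives of $X_{C,\epsilon}$ being evaluated at $w:=\left(\widetilde{\mathcal X}_{L,\epsilon}\circ X_{L,\epsilon}^C\right)(x_1)$. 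So everything reduces to bounding $(X_{C,\epsilon})_{x_1}'$ and $(X_{C,\epsilon})_{x_1x_1}''$ at $w$, uniformly for $x_1\in I_L^C$, and \lemmaref{XC1} lets me replace $X_{C,\epsilon}$ by its singular profile $X_{C,0}$ at the cost of an $\mathcal O(\epsilon^{1/(2k+1)})$ error in each derivative.

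Before invoking \lemmaref{nu0} I would check that $w$ stays in the compact interval $[-1-\zeta,-1+\zeta]$ on which that lemma is valid: by \remref{theta} the map $X_{L,\epsilon}^C$ sends $I_L^C$ into a $\theta$-neighbourhood of the origin up to $\mathcal O(\epsilon^{1/(2k+1)})$, and since $\widetilde{\mathcal X}_L(0,0,0)=-1$ with bounded derivative, $w$ lands in $[-1-\zeta,-1+\zeta]$ once $\theta$ (hence $\eta$) is small relative to $\zeta$. The first estimate is then immediate: \lemmaref{nu0} keeps $(X_{C,0})_{x_1}'$ strictly inside $(-1,0)$, and with the near-identity prefactor (whose $\mathcal O(c)$ is itself shrunk through $\eta,\delta_1,\nu_1$ in \lemmaref{Q111}) together with the $\mathcal O(\epsilon^{1/(2k+1)})$ remainder, $X_\epsilon'(x_1)$ is pushed into the asserted band for all $x_1\in I_L^C$ and $0<\epsilon\ll1$, provided the auxiliary constants are taken as small fractions of $c$.

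The real content is the second estimate. Here I would upgrade the pointwise inequality $(X_{C,0})_{x_1x_1}''<0$ of \lemmaref{nu0} to a uniform one: on the \emph{compact} set $[-1-\zeta,-1+\zeta]$ strict negativity yields $(X_{C,0})_{x_1x_1}''\le -m$ for some $m>0$, and since $\nu$ is already frozen this $m$ is a fixed positive constant. Hence $(X_{C,\epsilon})_{x_1x_1}''\le -m/2<0$ for $\epsilon$ small, and since the prefactor $\chi\epsilon^{-2k/(2k+1)}=\delta\nu^{2k/(2k+1)}\epsilon^{-2k/(2k+1)}\to\infty$ as $\epsilon\to0$, the additive $\mathcal O(1)$ term is dominated and $X_\epsilon''(x_1)\to-\infty$ \emph{uniformly} in $x_1\in I_L^C$; in particular $X_\epsilon''(x_1)<-c^{-1}$ for all $0<\epsilon\ll1$.

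The main obstacle I anticipate is not any single estimate but the \emph{order of the quantifiers} and the matching of domains. The constants $\zeta,\eta,\theta,\delta,\nu$ must be frozen before $\epsilon\to0$ and in the right dependency order --- $\zeta,\nu$ first through \lemmaref{nu0}, then $\eta,\theta$ through the domain check and \lemmaref{Q111}, with $\delta$ entering only via $\chi$. The delicate point is that the scaling in \eqref{XC0} makes $X_{C,0}$ itself depend on $\nu$, so one must confirm that the uniform bound $m$ on $-(X_{C,0})_{x_1x_1}''$, although $\nu$-dependent, is a genuine positive constant once $\nu$ is fixed; only then is one entitled to let the blow-up factor $\chi\epsilon^{-2k/(2k+1)}$ force $X_\epsilon''<-c^{-1}$. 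Keeping this bookkeeping consistent across \lemmaref{nu0}, \lemmaref{Q111} and \lemmaref{XC1}, rather than any fresh computation, is what the argument really turns on.
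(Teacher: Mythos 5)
Your proposal is correct and follows essentially the same route as the paper: combine \lemmaref{Q111} with \lemmaref{XC1} and \lemmaref{nu0}, check that the evaluation point lands in $[-1-\zeta,-1+\zeta]$, and let the $\chi\epsilon^{-2k/(2k+1)}$ prefactor in \eqref{Xc11} overwhelm the $\mathcal O(1)$ term for the second-derivative bound. The one bookkeeping point: the paper fixes $\eta$, $\theta$ and $\delta=\delta_1$ \emph{first} via \lemmaref{Q111}, then sets $\zeta=\theta$ and takes $\nu=\min(\nu_0,\nu_1)$, whereas your stated order (``$\zeta,\nu$ first, then $\eta,\theta$'') would commit to $\nu$ before $\nu_1$ is available; taking the minimum at the end, as the paper does, repairs this trivially.
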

\begin{proof}
 Consider any $c>0$. Then our strategy for determining $\eta$, $\theta \in (0,\eta)$, $\delta$ and $\nu$ is as follows. First we fix $\eta$, $\theta\in (0,\eta)$ and $\delta=\delta_1$ as in \lemmaref{Q111}. Subsequently, we then follow \lemmaref{nu0} with $\zeta=\theta$ and pick $\nu=\text{min}(\nu_0,\nu_1)$. Then -- upon taking $0<\epsilon\ll 1$ -- it follows by \lemmaref{XC1} and \lemmaref{Q111}, and using the large $\epsilon^{-2k/(2k+1)}$ prefactor in \eqref{Xc11}, that
 \begin{align}
  X'_{\epsilon}(x_1)\in (-1+\mathcal O(c),\mathcal O(c)),\quad
  X''_{\epsilon}(x_1)<-c^{-1},\eqlab{intervalhe}
 \end{align}
 for all $x_1\in I_L^C$. 
\end{proof}}

To prove (i) in \thmref{main1}(d), we sketch the argument as follows. First, we work in chart $(\bar \rho=1)_1$ near $p_L$ using the coordinates in \lemmaref{rho1x1eps1Tilde}. We then describe a mapping from $\Sigma_L^{\text{in},L}$, consisting of all those points in $\Sigma_L^{\text{in}}$ with $x_1<-\theta(\epsilon_1 \nu^{-1})^{2k/(2k+1)}$, recall \eqref{sigmaLinC}, to the section 
\begin{align*}
 \Sigma_L^{\text{out},L} = \{(\rho_1,x_1,\epsilon_1)\vert x_1 = -\nu,\,\rho_1 \in[0,\beta_1],\,\epsilon_1\in [0,\beta_2]\}.
\end{align*}
This gives an expanding map, but as before, this expansion is (more than) compensated by the contraction eventually gained at $p_R$. Essentially, the result therefore follows from the details of the mapping from $\Sigma_L^{\text{out},L}$ to $\Sigma_R^{\text{in},C}$. This mapping can be described in two parts. The first part consists of a simple, near-identity mapping, near $(\bar \rho,\bar x,\bar \epsilon)=(0,-1,0)$, which can be studied in the chart $\bar x=-1$, the details of which are standard and left out of this manuscript completely for simplicity. The second part, is described in $(\bar \epsilon=1)_2$ using the local dynamics near the nonhyperbolic saddle $p_a$. This mapping is contracting due to the exponential contraction towards the center manifold that extends the invariant manifold $S_\epsilon$ onto the blowup sphere, recall \lemmaref{M1} item (1). In combination, this then proves (i). 

(iii) in \thmref{main1}(d) is simpler and can be described in the chart $(\bar r=1)_1$ only. For this, we again apply $\Phi_i^{-1}$ near $p_i$, $i=L,R$, respectively, and define a mapping from $\Sigma_L^{\text{in},R}$, consisting of all those points in $\Sigma_L^{\text{in}}$ with $x_1>\theta (\epsilon_1\nu^{-1})^{2k/(2k+1)}$, to 
\begin{align*}
 \Sigma_L^{\text{out},R} = \{(\rho_1,x_1,\epsilon_1)\vert x_1 = \eta,\,\rho_1 \in[0,\beta_1],\,\epsilon_1\in [0,\beta_2]\}.
\end{align*}
We subsequently follow this by a regular finite time flow map from $\Sigma_L^{\text{out},R}$ to 
\begin{align*}
 \Sigma_R^{\text{in},L} = \{(\rho_1,x_1,\epsilon_1)\vert x_1 = -\eta,\,\rho_1 \in[0,\beta_1],\,\epsilon_1\in [0,\beta_2]\},
\end{align*}
and lastly by a local mapping from $\Sigma_R^{\text{in},L}$ to $\Sigma_R^{\text{out}}$, recall \figref{barRhoEq1}. In this case, the mapping from $\Sigma_L^{\text{out},R}$ to $\Sigma_R^{\text{in},L}$ -- as in the proof of \lemmaref{nu0} -- can be made as close to the identity in $\rho_1$ and $\epsilon_1$ as desired. In combination, this proves (iii) and the proof \thmref{main1}. In particular, we highlight that the domains of (i) and (ii) as well as of (ii) and (iii) can (and will) be chosen to overlap.


\begin{proof}[\qss{Proof of \lemmaref{PC}}]
Following \eqref{PCDefine}, we prove \lemmaref{PC} by estimating $T'(u)$ and $T''(u)$. We will do so consecutively in the following.

Let $c = (\phi_+ \nu^k)^{-2/(2k+1)}$ and write the solution of \eqref{chini} with initial conditions $(u_0,c)\in \Sigma_L$ as $(u(t,u_0),v(t,u_0))$, i.e. $(u(0,u_0),v(0,u_0))=(u_0,c)$ for any $u_0$. Then $T(u_0)>0$ is the least positive solution satisfying
\begin{align}
 v(T(u_0),u_0) = c.\eqlab{Tdefine}
\end{align}
Differentiating \eqref{Tdefine} gives
\begin{align}
 T'(u_0) = -\frac{v'_u(T(u_0),u_0)}{v'_t(T(u_0),u_0)}.\eqlab{TpEqn}
\end{align}
Notice that if
\begin{align}
v'_u(T(u_0),u_0)<2v'_t(T(u_0),u_0),\eqlab{ineqFinal}
\end{align}
then,
by \eqref{TpEqn},
\begin{align}
 T'(u_0)>-2.\eqlab{Tpu0}
\end{align}
Since $P'_C(u)<0$ is trivial, this inequality implies the first claim in \eqref{UC} by differentiating \eqref{PCDefine}.  To show \eqref{ineqFinal} let $v_1(t)=v'_u(t,u_0)$ so that $v_1(0)=0$ and
\begin{align}
 \dot v_1 = 2 - k v^{-k-1}   v_1.\eqlab{v1Eqn}
\end{align}
Clearly, $v_1(t)>0$ for all $t>0$. 
Now, $\dot v(t,u_0) = v'_t(t,u_0)$ also satisfies the equation \eqref{v1Eqn}, but with the initial condition $\dot v(0,u_0)  = 2u_0+c^{-k}<0$. In light of \eqref{ineqFinal}, we therefore write $v_1$ as
\begin{align}
 v_1 = 2\dot v-w.\eqlab{w1Definition}
\end{align}
A simple calculations, shows that $w=w(t,u_0)$ also satisfies \eqref{v1Eqn}:
\begin{align}
 \dot w &=2-kv^{-k-1}w,\eqlab{wEqn}
\end{align}
 but now $w(0,u_0)=2\dot v(0,u_0)<0$. 
  We will now show that for all $u_0$
 \begin{align}
  0<w(T(u_0),u_0).\eqlab{ineqFinal2}
 \end{align}
  Adding $v_1=2\dot v (T(u_0),u_0)-w(T(u_0),u_0)$ to both sides of this equation, using \eqref{w1Definition}, produces \eqref{ineqFinal} and therefore \eqref{Tpu0} as desired.

 For \eqref{ineqFinal2}, notice by \eqref{wEqn} that $\dot w\ge 2$ for all $w\le 0$. Hence there exists a unique $t_*(u_0)$ such that
 \begin{align}
  w(t_*(u_0),u_0) = 0.\eqlab{wTstar}
 \end{align}
 Notice that $t_*(u_0)$ is a smooth function of $u_0$ by the implicit function theorem. Next, for values of $u_0<-c^{-k}/2$, sufficiently close to $-c^{-k}/2$ (the value of $u$ when the $v$-nullcline intersects $v=c$), a simple calculation shows that 
 \begin{align*}
  T'(u_0) = -2 -\frac23 k c^{-k-1} \left({c^{-k}} +2u_0\right) + \mathcal O\left(\left(c^{-k}/2 +u_0\right)^2\right) >-2,
 \end{align*}
 and $t_*(u_0)<T(u_0)$.
Hence, for these values of $u_0$,  it follows that \eqref{ineqFinal2}, and therefore also \eqref{Tpu0}, holds. 
 
 Suppose that 
 upon decreasing $u_0$ we find a first $u_*$ such that $T(u_*)=t_*(u_*)$. Notice then by \eqref{TpEqn} and \eqref{w1Definition} that $T'(u_*)=-2,$ and 
 \begin{align}
 t_*'(u_*)\le -2,\eqlab{contradict}
 \end{align}
 since $t_*(u_0)<T(u_0)$ for all $u_0\in (u_*,-c^{-k})$, by assumption. Then, by differentiating \eqref{wTstar},
 \begin{align}
  t_*'(u_*) = -\frac{w_1(t_*(u_*),u_*)}{\dot w(t_*(u_*),u_*)} = -\frac{w_1(t_*(u_*),u_*))}{2}.\eqlab{tstarp} \end{align}
 where $w_1$ satisfies the equation
\begin{align}
 \dot w_1 &=-kv^{-k-1} w_1 +k(k+1) v^{-k-2} w v_1, \eqlab{z1Eqn}
\end{align}
with $w_1(0) = 4$. In \eqref{tstarp}, we have also used that $\dot w=2$ at $t=t_*$ where $w=0$. 
Since $w(t,u_*)<0$ for all $t\in [0,t_*(u_*))$, and $v_1(t,u_*)>0$ for all $t$, we have by \eqref{z1Eqn} that $w_1(t_*(u_*),u_*)<4$. But then by \eqref{tstarp}
\begin{align*}
 t_*'(u_*)>-2.
\end{align*}
However, this contradicts \eqref{contradict} and hence no $u_*$ exists. Consequently, $T(u_0)>t_*(u_0)$ for all $u_0<-c^{-k}/2$ and therefore \eqref{Tpu0} holds.


%
%


For the subsequent claim in \eqref{UC}, we obtain the following expression for $T''(u_0)$
\begin{align}
 T''(u_0) = -\frac{1}{v'_t(T(u_0),u_0)}\left(2P'_C(u_0)(P'_C(u_0)-1)+v_2(T(u_0))\right),\eqlab{T2Expr}
\end{align}
by differentiating \eqref{Tdefine} twice with respect to $u_0$,
where $v_2 (t)= v''_{uu}(t,u_0)$ satisfies $v_2(0)=0$ and 
\begin{align*}
 \dot v_2 =- k v^{-k-1}  v_2+ k(k+1)v^{-k-2}  v_1^2 .
\end{align*}
Since $\dot v_1(0)=2$ and $v_1(t)>0$, it follows that $v_2(t)>0$ for all $t>0$. Consequently, by the first property in \eqref{UC}, \eqref{T2Expr} gives $T''(u_0)<0$, proving the last property in \eqref{UC}.  

\end{proof}
\section{Proof of \thmref{main2}}\seclab{proof2}
\qs{We now move on to prove \thmref{main2}. 
The periodic orbits we describe are fix points of $P(\cdot,\alpha,\epsilon)$, recall \eqref{QR}. 
\begin{lemma}\lemmalab{finallem}
We have:
\begin{itemize}
 \item[(1)] Fix $\alpha<0$ sufficiently small. Then locally there exists two and only two fix points of $P(\cdot,\alpha,\epsilon)$ for all $0<\epsilon\ll 1$. 
 \item[(2)] Fix $\alpha>0$ sufficiently small. Then no local fix points exist of $P(\cdot,\alpha,\epsilon)$ for all $0<\epsilon\ll 1$.
\end{itemize}
\end{lemma}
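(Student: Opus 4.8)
My plan is to reformulate the problem through the fixed-point equation \eqref{QRFixPoint} and to study the sign structure of
\[
 D(x,\alpha,\epsilon):=Q(x,\alpha,\epsilon)-R^{-1}(x,\alpha,\epsilon),\qquad x\in I_L,
\]
whose zeros are precisely the fixed points of $P(\cdot,\alpha,\epsilon)$. First I would record the first-order data of $R^{-1}$: differentiating the identity $R(R^{-1}(w,\alpha,\epsilon),\alpha,\epsilon)=w$ and using \lemmaref{Rmap} gives $(R^{-1})'_x=1/R'_x$ and $(R^{-1})'_\alpha=-R'_\alpha/R'_x$. Hence, near the grazing base point, $R^{-1}(\gamma_L,0,0)=\gamma_R$, the slope $(R^{-1})'_x$ lies in $(-\tfrac{1}{1+\omega},0)$ — so that $|(R^{-1})'_x|$ is bounded away from both $0$ and $1$ — and $(R^{-1})'_\alpha>0$ is bounded below by a positive constant. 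By smoothness these bounds persist for $(\alpha,\epsilon)$ small.

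The core step is to prove that, once the constant $c$ of \thmref{main1}(d) is fixed small relative to $\omega$ (say $c<\min\{1/M,\omega/(1+\omega)\}$, where $|R'_x|\le M$), the function $x\mapsto D(x,\alpha,\epsilon)$ is unimodal — strictly increasing then strictly decreasing — for all small $\alpha,\epsilon$, with a unique maximum at some $x_\ast=x_\ast(\alpha,\epsilon)$ in the corner window $|x-\gamma_L|\le\chi\epsilon^{2k/(2k+1)}$. This is read off from $D'_x=Q'_x-(R^{-1})'_x$ together with \thmref{main1}(d): on the flat part (i) one has $|Q'_x|\le c<1/M\le|(R^{-1})'_x|$ with $(R^{-1})'_x<0$, so $D'_x>0$; on the outer part (iii) one has $Q'_x\le-(1-c)<(R^{-1})'_x$ (the negativity of $Q'_x$ persisting from the corner), so $D'_x<0$; and on the corner (ii) the derivative $Q'_x$ is strictly decreasing, because $Q''_{xx}<0$, and sweeps an interval that contains the nearly constant value $(R^{-1})'_x\in(-1,0)$ exactly once. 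Choosing $I_L$ so that $D<0$ at both of its endpoints (on the left $Q\approx\gamma_R$ while $R^{-1}$ is larger; on the right $Q$ has descended at slope $\approx-1$, faster than $R^{-1}$), unimodality then forces the number of zeros of $D$ to be $0$, $1$, or $2$ according to the sign of $D(x_\ast,\alpha,\epsilon)$.

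It remains to pin down the sign of this maximal value. Since $x_\ast$ lies in the corner, \thmref{main1}(b),(c) give $Q(x_\ast,\alpha,\epsilon)=\gamma_R(\alpha)+\mathcal{O}(\epsilon^{2k/(2k+1)})$ with the flat level as in \eqref{mEpsSM}, so $D(x_\ast,\alpha,\epsilon)$ is $\mathcal{O}(\epsilon^{2k/(2k+1)})$-close to its singular ($\epsilon=0$) counterpart
\[
 D_0(\alpha):=\gamma_R(\alpha)-R^{-1}(\gamma_L(\alpha),\alpha,0),
\]
the maximal value of the PWS difference $Q_0(\cdot,\alpha)-R^{-1}(\cdot,\alpha,0)$, with $Q_0$ as in \remref{pwsmap}. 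At $\alpha=0$ this vanishes, $D_0(0)=\gamma_R-\gamma_R=0$, which is exactly the grazing. The transversality of the grazing — equivalently the nondegeneracy condition \eqref{QRcond2} carrying a definite sign — gives $D_0'(0)<0$; this is where assumption (B2) enters, being encoded through $R'_\alpha(\gamma_R,0,0)>\omega>0$ in \lemmaref{Rmap}, which dominates the regular $\alpha$-dependence of the local map. Consequently $D_0(\alpha)>0$ for fixed small $\alpha<0$ and $D_0(\alpha)<0$ for fixed small $\alpha>0$. For such a fixed $\alpha$ and all sufficiently small $\epsilon$, the $\mathcal{O}(\epsilon^{2k/(2k+1)})$ correction cannot change this sign, so $D(x_\ast,\alpha,\epsilon)>0$ when $\alpha<0$, giving exactly two zeros and proving (1), while $D(x_\ast,\alpha,\epsilon)<0$ when $\alpha>0$ forces $D<0$ throughout $I_L$ and hence no zeros, proving (2).

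The main obstacle I anticipate is the unimodality step: making rigorous that $D'_x$ has a single zero requires choosing $c$ quantitatively small compared with $\omega$, so that $(R^{-1})'_x$ sits strictly inside the range swept by $Q'_x$ across the corner and strictly outside it on the two flanks, and then invoking the strict monotonicity of $Q'_x$ guaranteed by $Q''_{xx}<0$ in \thmref{main1}(d)(ii). The remaining ingredient, the sign $D_0'(0)<0$, is the transversality of the grazing and must be traced back to (B2) through \lemmaref{Rmap}; everything else reduces to Taylor expansion at the grazing configuration together with the asymptotics \eqref{mEpsSM}, where one uses only that the fixed nonzero value of $\alpha$ outweighs the vanishing $\epsilon$-remainder.
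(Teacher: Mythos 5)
Your argument is correct in substance, but it takes a genuinely different route from the paper's. The paper proves (1) ``softly'': the attracting fixed point comes from the contraction mapping theorem applied to $P=R\circ Q$ on the set $K$ where \thmref{main1}(c) makes $Q$ a strong contraction (using \lemmaref{Rmap} and \lemmaref{regular}), the repelling one comes from regular perturbation of the hyperbolic limit cycle of $Z_+$ lying at a positive distance from $\Sigma$, and the count ``only two'' together with case (2) is then attributed to \thmref{main1}(d). You instead run a single unimodality/intermediate-value argument on $D=Q-R^{-1}$ across the three regimes of \thmref{main1}(d); this is essentially a rigorous version of the graphical framework of \figref{QRgraph}, which the paper only deploys afterwards for the saddle-node analysis. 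Your route is more uniform -- one mechanism delivers existence, the exact count, and nonexistence simultaneously -- but it loses the dynamical identification of the two cycles (perturbed sliding cycle versus perturbed $Z_+$-cycle) that the paper gets for free and reuses in \corref{model}. Two steps lean on slightly more than \thmref{main1}(d) literally states. First, to conclude that $D'_x$ has exactly one zero in the corner you need $Q''_{xx}$ to dominate the bounded $(R^{-1})''_{xx}$ there; the theorem asserts $Q''_{xx}\le -c^{-1}$ only at one point, but \lemmaref{final1d} in fact gives the uniform bound on all of $I_L^C$ (alternatively, observe that $(R^{-1})'_x$ varies only by $\mathcal O(\epsilon^{2k/(2k+1)})$ over the corner window, so the strictly decreasing $Q'_x$ crosses it once). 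Second, the sign $D_0'(0)<0$ -- your version of \eqref{QRcond2} -- requires $(R^{-1})'_\alpha=-R'_\alpha/R'_x>0$ to dominate the $\alpha$-dependence of $\gamma_{L,R}$ and of the local map; \lemmaref{Rmap} alone does not quantify this, and you should supplement it with the content of \lemmaref{Q2expr} and \lemmaref{R2expr}, namely that after the rescaling $\alpha=\epsilon^{2k/(2k+1)}\alpha_2$ the leading order of the local map is $\alpha_2$-independent while $\upsilon_1>0$. With those two references supplied, the proof is complete.
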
}
\begin{proof}
\qs{We first prove (1). Consider therefore $\alpha<0$ sufficiently small. Then we obtain, by the properties in \lemmaref{Rmap}, using \lemmaref{regular}, that $L\ni x\mapsto P(x,\alpha,\epsilon)$ is a strong contraction within some open subset $L\subset K$, recall \thmref{main1}(c). This produces a hyperbolic and attracting fix-point of $P$ in $L$ by the contraction mapping theorem. This fix-point for $\alpha<0$ co-exists with the hyperbolic and repelling fix-point obtained by the regular perturbation of $\Gamma_\alpha\subset \{y\ge c(\alpha)>0\}$ of $Z_+$, for some $c(\alpha)>0$, using \lemmaref{regular}. The fact that only two intersections exist is a simple consequence of the properties of $Q$ in \thmref{main1}(d). Case (2) in \lemmaref{finallem} is also a simple corollary of \lemmaref{Rmap} and \thmref{main1}(d).}
\end{proof}
\qs{\begin{remark}\remlab{bonet}
    \cite[Theorem 2.3]{bonet-rev2016a} is basically \lemmaref{finallem} for the Sotomayor-Teixera regularization functions considered in that paper. It rests, as the proof \lemmaref{finallem}, upon the domains (i) and (iii) in \thmref{main1} (d). But as we shall see below, the intermediate domain (ii) in \thmref{main1} (d) allow for a description of what occurs in between the cases (1) and (2) in \lemmaref{finallem}. In fact, the results of \thmref{main1} enable a very direct approach for this regime using the implicit function theorem.  
    \end{remark}}

Following \eqref{QRFixPoint}, fix points are intersections of the two graphs of $Q(\cdot,\alpha,\epsilon)$ and $R^{-1}(\cdot,\alpha,\epsilon)$. In particular, transverse intersections correspond to hyperbolic periodic orbits.  We illustrate this viewpoint in \figref{QRgraph} in the case where no intersections occur, e.g. for $\alpha>0$ and $0<\epsilon\ll 1$, recall \lemmaref{finallem} (2). Following \thmref{main1}(d) and \lemmaref{Rmap}, it is easy to use this graphical framework together with \eqref{concaveDown} to prove the existence of a unique saddle-node bifurcation, occurring within the domain (ii) of \thmref{main1}(d). Notice in particular, by \lemmaref{Rmap}, see also \eqref{Rprop2}, that the slope of $R^{-1}(\cdot,\alpha,\epsilon)$, being the right hand side of \eqref{QRFixPoint}, is greater than $-1+\omega$, say, for some (new) fixed $\omega>0$, and all $\epsilon\in [0,\epsilon_0)$, $\alpha \in I$. From the picture in \figref{QRgraph}, the saddle-node is by \lemmaref{Rmap}, see \eqref{Rprop3}, obtained by decreasing $\alpha$. However, to obtain \eqref{alphaEpsSN} we proceed using an implicit function theorem argument as follows: \qss{Consider $Q_1$ \eqref{Q1comp} and the associated one-dimensional map \eqref{QDefined} obtained on the level sets defined by $\rho_1^{2k+1}\epsilon_1=\epsilon$. However, in the context of \thmref{main2}, we now write the right hand side of \eqref{QDefined} as $X_\epsilon(x_1,\alpha)$ highlighting the dependency on the bifurcation parameter $\alpha$. Then introduce $x_2$ and $\alpha_2$ by }
\begin{align}
x_1=\epsilon^{2k/(2k+1)}x_2, \quad \alpha=\epsilon^{2k/(2k+1)}\alpha_2,\eqlab{x2alpha2}
\end{align}
and let
\begin{align}
 Q_2(x_2,\alpha_2,\epsilon) :=\epsilon^{-2k/(2k+1)} X_\epsilon(\epsilon^{2k/(2k+1)}x_2,\epsilon^{2k/(2k+1)}\alpha_2,\epsilon).\eqlab{Q2}
\end{align}

\begin{lemma}\lemmalab{Q2expr}
 \qss{$Q_2$ satisfies the following estimates
 \begin{align}
  Q_2(x_2,\alpha_2,\epsilon) = \chi^{-1}\left(\mathcal X_{R,0} \circ X_{C,0} \circ  \widetilde{\mathcal  X}_{L,0} \right)(\chi x_2)+\mathcal O(\epsilon^{1/(2k+1)}),\eqlab{Q2Asymp}
 \end{align}
 where $x_2 \in [-\theta \chi^{-1},\theta \chi^{-1}]$, recall \eqref{domainhere}.
%
 The order of the remainder in \eqref{Q2Asymp} does not change as $\epsilon\rightarrow 0$ upon differentiation with respect to $x_2$ and $\alpha_2$. In particular, for any $c>0$ there exists $\eta$, $\theta\in (0,\eta)$,$\delta$ and $\nu$ such that
 \begin{align*}
 (Q_2)'_{x_2}(x_2,\alpha_2,0)& = \left(\mathcal X_{R,0} \circ X_{C,0} \circ  \widetilde{\mathcal  X}_{L,0} \right)'(\chi x_2)\in (-1+c,-c),\\
 (Q_2)''_{x_2x_2}(x_2,\alpha_2,0)& = \chi\left(\mathcal X_{R,0} \circ X_{C,0} \circ  \widetilde{\mathcal  X}_{L,0} \right)''(\chi x_2)<0
 \end{align*}}
   \end{lemma}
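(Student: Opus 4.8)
The plan is to exploit the fact that the two singular maps in the composition \eqref{QDefined}, namely the inner $X_{L,\epsilon}^C$ and the outer $X_{R,\epsilon}^C$, carry \emph{reciprocal} prefactors $\chi\epsilon^{-2k/(2k+1)}$ and $\chi^{-1}\epsilon^{2k/(2k+1)}$ to leading order, recall \eqref{XLepsAsymp} and \eqref{XRepsAsymp}. After the outer rescaling in \eqref{Q2}, these prefactors, together with the inner scaling $x_1=\epsilon^{2k/(2k+1)}x_2$ from \eqref{x2alpha2}, collapse to an $\mathcal O(1)$ quantity, so that only the three regular middle maps $\widetilde{\mathcal X}_{L,\epsilon}$, $X_{C,\epsilon}$, $\mathcal X_{R,\epsilon}$ --- evaluated at their $\epsilon=0$ limits --- survive at leading order. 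I would first record that $\alpha=\epsilon^{2k/(2k+1)}\alpha_2\to 0$ as $\epsilon\to 0$ for $\alpha_2$ in a compact set, so that the $\alpha$-dependence of every component map contributes at most $\mathcal O(\alpha)=\mathcal O(\epsilon^{2k/(2k+1)})$, which is $o(\epsilon^{1/(2k+1)})$ since $k\ge 1$. Thus all $\alpha$-dependence folds into the $\mathcal O(\epsilon^{1/(2k+1)})$ remainder, and the limit maps are precisely the maps $\widetilde{\mathcal X}_{L,0}$, $X_{C,0}$, $\mathcal X_{R,0}$.

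To establish \eqref{Q2Asymp} I would propagate a point $x_1=\epsilon^{2k/(2k+1)}x_2$, with $x_2\in[-\theta\chi^{-1},\theta\chi^{-1}]$ (recall \eqref{domainhere}), through the chain. Applying \eqref{XLepsAsymp} gives $X_{L,\epsilon}^C(\epsilon^{2k/(2k+1)}x_2)=\chi x_2+\mathcal O(\epsilon^{1/(2k+1)})$, an $\mathcal O(1)$ value lying in $[-\theta,\theta]+\mathcal O(\epsilon^{1/(2k+1)})$. Since each of $\widetilde{\mathcal X}_{L,\epsilon}$, $X_{C,\epsilon}$, $\mathcal X_{R,\epsilon}$ is $C^2$ in $x_1$ and converges to its $\alpha=0$ limit at rate $\mathcal O(\epsilon^{1/(2k+1)})$ (using \lemmaref{XC1} for $X_{C,\epsilon}$ and the explicit $(\epsilon\nu^{-1})^{1/(2k+1)}$-dependence for the other two), and since a $C^1$ map turns an $\mathcal O(\epsilon^{1/(2k+1)})$ perturbation of its argument into an $\mathcal O(\epsilon^{1/(2k+1)})$ perturbation of its value, composing the three middle maps yields $w+\mathcal O(\epsilon^{1/(2k+1)})$ with $w:=(\mathcal X_{R,0}\circ X_{C,0}\circ\widetilde{\mathcal X}_{L,0})(\chi x_2)$. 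Here I would note the consistency check that $\widetilde{\mathcal X}_{L,0}(\chi x_2)$ lies near $\widetilde{\mathcal X}_{L,0}(0)=-1$, precisely the interval $[-1-\zeta,-1+\zeta]$ with $\zeta=\theta$ on which \lemmaref{nu0} controls $X_{C,0}$. Finally \eqref{XRepsAsymp} gives $X_{R,\epsilon}^C(w+\mathcal O(\epsilon^{1/(2k+1)}))=\chi^{-1}\epsilon^{2k/(2k+1)}w+\mathcal O(\epsilon)$, because $\chi^{-1}\epsilon^{2k/(2k+1)}\cdot\mathcal O(\epsilon^{1/(2k+1)})=\mathcal O(\epsilon)$; multiplying by $\epsilon^{-2k/(2k+1)}$ as in \eqref{Q2} produces exactly \eqref{Q2Asymp}.

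For the derivative statements I would differentiate the composition and use that the singular prefactors cancel in the chain rule, exactly as in \lemmaref{Q111}. Differentiating \eqref{Q2} in $x_2$ gives $(Q_2)'_{x_2}=X_\epsilon'(\epsilon^{2k/(2k+1)}x_2)$, and in the chain rule for $X_\epsilon'$ the factor $(X_{L,\epsilon}^C)'\sim\chi\epsilon^{-2k/(2k+1)}$ cancels against $(X_{R,\epsilon}^C)'\sim\chi^{-1}\epsilon^{2k/(2k+1)}$, leaving the product of the three middle-map derivatives plus an $\mathcal O(\epsilon^{1/(2k+1)})$ error that, as already established in the component lemmas, does not grow under differentiation. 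At $\epsilon=0$ this gives $(Q_2)'_{x_2}(x_2,\alpha_2,0)=(\mathcal X_{R,0}\circ X_{C,0}\circ\widetilde{\mathcal X}_{L,0})'(\chi x_2)$ and, differentiating once more (the extra $\chi$ arising from the inner scaling), $(Q_2)''_{x_2x_2}(x_2,\alpha_2,0)=\chi(\mathcal X_{R,0}\circ X_{C,0}\circ\widetilde{\mathcal X}_{L,0})''(\chi x_2)$. The sign and range claims then follow as in \lemmaref{final1d}: taking $\eta,\delta,\nu$ small makes $\widetilde{\mathcal X}_{L,0}$ and $\mathcal X_{R,0}$ as $C^2$-close to their slope-$1$ linearizations as desired (using $(\widetilde{\mathcal X}_L)'_{x_1}(0,0,0)=1$ and the analogous property of $\mathcal X_R$ as the inverse of $\widetilde{\mathcal X}_R$), so that the triple first derivative is $(1+\mathcal O(c))X_{C,0}'(\chi x_2)\in(-1+c,-c)$ by \lemmaref{nu0}, while the triple second derivative is dominated by $(1+\mathcal O(c))X_{C,0}''<0$, the contributions of $\widetilde{\mathcal X}_{L,0}''$ and $\mathcal X_{R,0}''$ being negligible. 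The $\alpha_2$-derivative is handled identically: the chain rule through \eqref{Q2} reintroduces no net singular prefactor since $\partial_{\alpha_2}=\epsilon^{2k/(2k+1)}\partial_\alpha$ cancels the $\epsilon^{-2k/(2k+1)}$, and the leading term of \eqref{Q2Asymp} is $\alpha_2$-independent, so $(Q_2)'_{\alpha_2}=\mathcal O(\epsilon^{1/(2k+1)})$. I expect the main obstacle to be purely the bookkeeping of error orders --- in particular, verifying that neither the $\alpha$-dependence nor the remainders from the regular maps get amplified by the singular factor $\epsilon^{-2k/(2k+1)}$ in \eqref{Q2}. This works precisely because that factor only ever meets a remainder \emph{after} the remainder has first been multiplied by the reciprocal $\epsilon^{2k/(2k+1)}$ coming from the contraction $X_{R,\epsilon}^C$; making this cancellation rigorous and uniform in $(x_2,\alpha_2)$ on the compact domain, together with the analogous statement for the derivatives, is the only genuine work.
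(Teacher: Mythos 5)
Your proposal is correct and takes essentially the same route as the paper: the paper's own proof is a one-line appeal to the definition \eqref{Q2}, the asymptotics \eqref{XRepsAsymp} and \eqref{XLepsAsymp}, and \lemmaref{final1d} (hence \lemmaref{nu0}, \lemmaref{XC1} and \lemmaref{Q111}), and your argument is precisely a detailed unpacking of that cancellation of the reciprocal singular prefactors through the composition \eqref{QDefined}.
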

 \begin{proof}
  Follows from the definition of $Q_2$ and \lemmaref{final1d}, recall also \eqref{XRepsAsymp} and \eqref{XLepsAsymp}. 
 \end{proof}


%

 Next, we continue to write the smooth $R^{-1}(\cdot,\alpha,\epsilon)$ in the $(\bar r=1)_1$-chart and express the resulting mapping in the same coordinates used for $Q_1$ in \eqref{Q1comp} by composing the resulting mapping with $\Psi_L$ from the left and $\Psi_R^{-1}$ from the right. Subsequently, define $R_2^{-1}(x_2,\alpha_2,\epsilon)$ completely analogously to \eqref{Q2} by first going to a one-dimensional mapping on the level sets $\rho_1^{2k+1}\epsilon_1=\epsilon$, insert \eqref{x2alpha2} into the resulting expression and ``blow up'' $\gamma_R$ by dividing the resulting expression by $\epsilon^{-2k/(2k+1)}$ as in \eqref{Q2}. 
 \begin{lemma}\lemmalab{R2expr}
 $R_2^{-1}$ is smooth in $x_2$ and $\alpha_2$. Also $R_2^{-1}$ satisfy the following estimate
 \begin{align}
  R_2^{-1}(x_2,\alpha_2,\epsilon) = \upsilon_0 x_2+\upsilon_1\alpha_2 +\mathcal O(\epsilon^{1/(2k+1)}),\eqlab{R2expr}
 \end{align}
 where $\upsilon_0\in (-1,0)$, $\upsilon_1 >0$ upon possibly decreasing $\eta$, $\nu$ and $\delta$. 
 \end{lemma}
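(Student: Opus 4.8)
The plan is to exploit that, unlike $Q$, the map $R^{-1}$ involves no singular passage near the fold: by \lemmaref{regular} it is a regular perturbation of the $Z_+$-return map, hence smooth in $(x,\alpha,\epsilon)$ jointly (including at $\epsilon=0$) on a neighbourhood of the base point $(x,\alpha,\epsilon)=(\gamma_L,0,0)$. First I would record the base-point data obtained from \lemmaref{Rmap} through the inverse function theorem. Since $R(\gamma_R,0,0)=\gamma_L$ by \eqref{Rprop1}, the inverse in the first argument satisfies
\begin{align*}
 (R^{-1})'_x(\gamma_L,0,0)=\frac{1}{R'_x(\gamma_R,0,0)}\in(-1,0),\qquad
 (R^{-1})'_\alpha(\gamma_L,0,0)=-\frac{R'_\alpha(\gamma_R,0,0)}{R'_x(\gamma_R,0,0)}>0,
\end{align*}
the signs following from \eqref{Rprop2} and \eqref{Rprop3}. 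These inequalities require $\omega>0$, which is available precisely after decreasing $\xi$ and $\delta$ as in \lemmaref{Rmap}; this, together with the domains of validity of $\Psi_L,\Psi_R$, is the source of the clause ``upon possibly decreasing $\eta,\nu,\delta$''.

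Next I would pass to the $(\bar r=1)_1$-chart coordinates used for $Q_1$. On both sections $\Sigma_L$ and $\Sigma_R$ we have $y=\delta$, hence $\rho_1=\delta^{1/2k}$ and $x=\rho_1^kx_1=\delta^{1/2}x_1$ by \eqref{yRho1} and \eqref{epsilon11}. After the shift built into $\Psi_L$ (resp. $\Psi_R$), which places $\gamma_L$ (resp. $\gamma_R$) at $x_1=0$ and is the identity on the linear part, the chart coordinate is a regular reparametrisation with
\begin{align*}
 x-\gamma_L=\delta^{1/2}x_1+\mathcal O(\delta x_1^2)
\end{align*}
on the input side and the same relation with $\gamma_R$ on the output side. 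Restricting to the invariant level set $\rho_1^{2k+1}\epsilon_1=\epsilon$ turns $R^{-1}$ into a one-dimensional map $x_1\mapsto x_1^{\mathrm{out}}$; Taylor expanding the smooth $R^{-1}$ about $(\gamma_L,0,0)$, substituting the two coordinate relations and solving for $x_1^{\mathrm{out}}$ gives
\begin{align*}
 x_1^{\mathrm{out}}=(R^{-1})'_x(\gamma_L,0,0)\,x_1+\frac{(R^{-1})'_\alpha(\gamma_L,0,0)}{\delta^{1/2}}\,\alpha+\mathcal O(\epsilon)+\mathcal O(x_1^2)+\mathcal O(\alpha^2),
\end{align*}
where the factor $\delta^{1/2}$ cancels in the $x_1$-coefficient but survives as $\delta^{-1/2}$ in the $\alpha$-coefficient.

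Finally I would insert the scaling \eqref{x2alpha2}, namely $x_1=\epsilon^{2k/(2k+1)}x_2$ and $\alpha=\epsilon^{2k/(2k+1)}\alpha_2$, and divide by $\epsilon^{2k/(2k+1)}$, exactly as in the definition \eqref{Q2} of $Q_2$. The two linear terms are invariant under this operation, producing $\upsilon_0:=(R^{-1})'_x(\gamma_L,0,0)\in(-1,0)$ and $\upsilon_1:=\delta^{-1/2}(R^{-1})'_\alpha(\gamma_L,0,0)>0$. The pure $\epsilon$-term is amplified only by $\epsilon^{-2k/(2k+1)}$, and since $1-\tfrac{2k}{2k+1}=\tfrac{1}{2k+1}$ it contributes $\mathcal O(\epsilon^{1/(2k+1)})$; the quadratic remainders contribute $\mathcal O(\epsilon^{2k/(2k+1)})\subseteq\mathcal O(\epsilon^{1/(2k+1)})$ on the bounded domain $x_2\in[-\theta\chi^{-1},\theta\chi^{-1}]$, recall \eqref{domainhere}, with $\alpha_2$ bounded. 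This yields \eqref{R2expr}. Smoothness in $(x_2,\alpha_2)$ is immediate, since $R_2^{-1}$ is a composition of the smooth $R^{-1}$, the diffeomorphisms $\Psi_L,\Psi_R$ and an affine rescaling; for the same reason the remainder order is preserved under differentiation in $x_2,\alpha_2$.

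The main obstacle is bookkeeping rather than analysis: one must track the three rescalings -- the $\delta^{1/2}$ between $x$ and the chart coordinate $x_1$, the $\Psi_{L},\Psi_R$ shifts placing $\gamma_{L},\gamma_R$ at the origin, and the $\epsilon^{2k/(2k+1)}$ blow-up of $\gamma_R$ -- carefully enough to see that the $\alpha$-derivative survives with a positive coefficient while the intrinsic $\epsilon$-dependence is pushed entirely into the $\mathcal O(\epsilon^{1/(2k+1)})$ remainder. The conceptual point that makes this routine, in contrast to the delicate analysis of $Q$, is \lemmaref{regular}: because $R^{-1}$ is a regular perturbation it is affine to leading order and contributes no singular $\epsilon$-scaling of its own.
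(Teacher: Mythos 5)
Your proof is correct and takes essentially the same route as the paper's (much terser) argument: smoothness of $R$, the base-point identity $R(\gamma_R,0,0)=\gamma_L$, the near-linearity of the chart diffeomorphisms $\Psi_L,\Psi_R$, and the sign information from \lemmaref{Rmap} transferred to $R^{-1}$ via the inverse function theorem, followed by the $\epsilon^{2k/(2k+1)}$ rescaling. Your version is merely more explicit about the $\delta^{1/2}$ bookkeeping and about $(R^{-1})'_\alpha=-R'_\alpha/R'_x$, which is a correct refinement of what the paper states.
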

\begin{proof}
 Simple consequence of the smoothness of $R$, the fact that $R(\gamma_R,0,0)=\gamma_L$ and finally that $\mathcal X_R(\rho_1,\cdot,\epsilon_1)$ and $\widetilde{\mathcal X}_L(\rho_1,\cdot,\epsilon_1)$ are as close to their linearization as desired upon decreasing $\xi$. In particular, the estimates for $\upsilon_0$ and $\upsilon_1$ then follow from the fact
that $R'_x(\gamma_R,0,0)^{-1}\in (-1,0)$, $R'_\alpha(\gamma_R,0,0)^{-1}>0$ by \lemmaref{Rmap}.
\end{proof}

 The fix-point equation \eqref{QRFixPoint} becomes
\begin{align}
 Q_2(x_2,\epsilon,\alpha) = R_2^{-1}(x_2,\alpha_2,\epsilon),\eqlab{Q2R2}
\end{align}
in terms of $Q_2$ and $R_2^{-1}$. This equation is suitable for the application of the implicit function theorem. In particular, each side is at least $C^2$ with respect to $x_2$ and $\alpha_2$, depending continuously on $\epsilon\in [0,\epsilon_0)$.
\begin{lemma}\lemmalab{sn2}
For each $\epsilon \in [0,\epsilon_0)$, there exists a unique solution $(x_2(\epsilon),\alpha_2(\epsilon),\epsilon)$ of \eqref{Q2R2}, $x_2(\epsilon)$ and $\alpha_2(\epsilon)$ both being continuous in $\epsilon \in [0,\epsilon_0)$, such that 
 \begin{align}
 (Q_{2})'_{x_2} &= (R_2^{-1})'_{x_2},\quad   \text{(Degeneracy condition)},\eqlab{Q2R2p}\\
   (Q_{2})''_{x_2x_2} &\ne (R_2^{-1})''_{x_2x_2},\quad \text{(Nondegeneracy condition I)},\eqlab{Q2R2c1}\\
    (Q_{2})'_{\alpha_2} &\ne (R_2^{-1})'_{\alpha_2},\quad \text{(Nondegeneracy condition II)},\eqlab{Q2R2c2}
  \end{align}
  with all partial derivatives evaluated at $(x_2(\epsilon),\alpha_2(\epsilon),\epsilon)$.
\end{lemma}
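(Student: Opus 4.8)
The plan is to realise the saddle-node point as a joint zero of a two-component map and to obtain it, together with its continuous $\epsilon$-dependence, from the implicit function theorem. I would set
\begin{align*}
 \mathcal G(x_2,\alpha_2,\epsilon) = \begin{pmatrix} Q_2(x_2,\alpha_2,\epsilon) - R_2^{-1}(x_2,\alpha_2,\epsilon)\\ (Q_2)'_{x_2}(x_2,\alpha_2,\epsilon) - (R_2^{-1})'_{x_2}(x_2,\alpha_2,\epsilon)\end{pmatrix},
\end{align*}
so that $\mathcal G_1=0$ is the fix-point equation \eqref{Q2R2} and $\mathcal G_2=0$ is the degeneracy condition \eqref{Q2R2p}; a zero of $\mathcal G$ is exactly a degenerate fix point. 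By \lemmaref{Q2expr} and \lemmaref{R2expr}, $Q_2$ and $R_2^{-1}$ are $C^2$ in $(x_2,\alpha_2)$, and --- the point that makes the scheme work --- their remainder estimates hold uniformly and retain their order $\mathcal O(\epsilon^{1/(2k+1)})$ under one differentiation in $x_2$ and $\alpha_2$. Hence $\mathcal G$ and $\partial_{(x_2,\alpha_2)}\mathcal G$ are jointly continuous in $(x_2,\alpha_2,\epsilon)$ up to and including $\epsilon=0$.

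I would first solve $\mathcal G=0$ at $\epsilon=0$. Writing $H:=\mathcal X_{R,0}\circ X_{C,0}\circ\widetilde{\mathcal X}_{L,0}$, \lemmaref{Q2expr} gives $Q_2(x_2,\alpha_2,0)=\chi^{-1}H(\chi x_2)$, independent of $\alpha_2$, while \lemmaref{R2expr} gives $R_2^{-1}(x_2,\alpha_2,0)=\upsilon_0 x_2+\upsilon_1\alpha_2$ with $\upsilon_0=R'_x(\gamma_R,0,0)^{-1}\in(-1,0)$ and $\upsilon_1>0$. The degeneracy component then reads $H'(\chi x_2)=\upsilon_0$. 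Since $H''<0$ on the critical domain --- this is the strict concavity \eqref{concaveDown}, equivalently \lemmaref{Q2expr} --- the function $H'$ is strictly decreasing, so choosing $\upsilon=\upsilon_0$ in \thmref{main1}(d)(ii) (and $c$ small) there is a unique $x_2(0)$ with $H'(\chi x_2(0))=\upsilon_0$, at which $\chi H''(\chi x_2(0))\le -c^{-1}<0$. The fix-point component then fixes $\alpha_2(0)=\upsilon_1^{-1}\big(\chi^{-1}H(\chi x_2(0))-\upsilon_0 x_2(0)\big)$ uniquely, using $\upsilon_1>0$. This is the unique base solution.

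It remains to verify that $\partial_{(x_2,\alpha_2)}\mathcal G$ is invertible there. At $(x_2(0),\alpha_2(0),0)$ one has $\partial_{x_2}\mathcal G_1=(Q_2)'_{x_2}-(R_2^{-1})'_{x_2}=0$ (this is $\mathcal G_2=0$), $\partial_{\alpha_2}\mathcal G_1=-\upsilon_1$, $\partial_{x_2}\mathcal G_2=\chi H''(\chi x_2(0))$, and $\partial_{\alpha_2}\mathcal G_2=0$ (both $Q_2$ and $R_2^{-1}$ being, at $\epsilon=0$, independent of resp. affine in $\alpha_2$). Therefore
\begin{align*}
 \det \partial_{(x_2,\alpha_2)}\mathcal G\big|_{\epsilon=0} = \det\begin{pmatrix} 0 & -\upsilon_1\\ \chi H''(\chi x_2(0)) & 0\end{pmatrix} = \upsilon_1\,\chi H''(\chi x_2(0)) < 0,
\end{align*}
which is nonzero precisely because $\upsilon_1>0$ (assumption (B2), through \lemmaref{Rmap}) and the strict concavity \eqref{concaveDown} hold simultaneously. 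A continuous-parameter implicit function theorem then yields a locally unique, continuous branch $(x_2(\epsilon),\alpha_2(\epsilon))$ solving \eqref{Q2R2} and \eqref{Q2R2p} for all $\epsilon\in[0,\epsilon_0)$ after shrinking $\epsilon_0$; uniqueness on the whole critical domain follows from the strict monotonicity of $(Q_2)'_{x_2}$ together with $\upsilon_1>0$. Finally, the nondegeneracy conditions hold along the branch: at $\epsilon=0$, $(Q_2)''_{x_2x_2}-(R_2^{-1})''_{x_2x_2}=\chi H''\le-c^{-1}\neq0$ gives \eqref{Q2R2c1}, and $(Q_2)'_{\alpha_2}-(R_2^{-1})'_{\alpha_2}=-\upsilon_1\neq0$ gives \eqref{Q2R2c2}, and both persist for small $\epsilon$ by continuity of the derivatives of $\mathcal G$.

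The step I expect to be the real obstacle is not a computation above but the reduced regularity in $\epsilon$: the maps depend on $\epsilon$ only continuously (to order $\mathcal O(\epsilon^{1/(2k+1)})$), so the classical implicit function theorem does not apply directly. The remedy is the continuous version of the theorem, whose hypothesis is exactly that the $(x_2,\alpha_2)$-Jacobian of $\mathcal G$ be jointly continuous up to $\epsilon=0$ and invertible at the base point; the former is supplied by the statements in \lemmaref{Q2expr} and \lemmaref{R2expr} that the remainders keep their order under differentiation in $x_2$ and $\alpha_2$. A secondary point needing care is the bookkeeping of constants, namely fixing $\upsilon=\upsilon_0$ so that the target slope lies in the range where \thmref{main1}(d)(ii) produces the unique $x_2(0)$.
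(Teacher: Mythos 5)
Your proposal is correct and follows essentially the same route as the paper: solve the coupled fix-point/degeneracy system at $\epsilon=0$ using the leading-order forms from \lemmaref{Q2expr} and \lemmaref{R2expr} (uniqueness of $x_2(0)$ from strict monotonicity of $(Q_2)'_{x_2}$, then $\alpha_2(0)$ from $\upsilon_1\neq 0$), check that the $(x_2,\alpha_2)$-Jacobian is regular via $\upsilon_1\,(Q_2)''_{x_2x_2}\neq 0$, and conclude by the implicit function theorem, with the nondegeneracy conditions persisting by continuity. Your explicit remark that only a continuous-in-$\epsilon$ version of the implicit function theorem is available is a welcome clarification of a point the paper leaves implicit.
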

\begin{proof}
Consider $\epsilon=0$. Then by \lemmaref{Q2expr} and \lemmaref{R2expr}, \eqref{Q2R2} becomes
\begin{align}
Q_2(x_2,\alpha_2,0) - \upsilon_0 x_2-\upsilon_1\alpha_2=0,\eqlab{sneq1}
\end{align}
Furthemore, \eqref{Q2R2p} becomes
\begin{align}
(Q_2)'_{x_2}(x_2,\alpha_2,0) - \upsilon_0=0.\eqlab{sneq2}
\end{align}
Seeing that $\upsilon_0\in (-1,0)$, it follows by \lemmaref{Q2expr} -- taking $c>0$ so small that $\upsilon_0 \in  (-1+c,c)$ -- that that the equation \eqref{sneq2} has a unique solution $x_2\in [-\theta \chi^{-1},\theta \chi^{-1}]$. Inserting this into \eqref{sneq1} gives a unique $\alpha_2$ since $\upsilon_1\ne 0$.
Notice that the resulting $\epsilon=0$ solution  $(x_2,\alpha_2)$ satisfies the nondegenaracy conditions \eqref{Q2R2c1} and \eqref{Q2R2c2}. In particular, by computing the Jacobian of the left hand sides of \eqref{sneq1} and \eqref{sneq2} with respect to $(x_2,\alpha_2)$, we obtain the matrix
\begin{align*}
\begin{pmatrix}
 (Q_2)'_{x_2}(x_2,\alpha_2,0) & -\upsilon_1\\
(Q_2)''_{x_2x_2}(x_2,\alpha_2,0) & 0
\end{pmatrix}
\end{align*}
which is regular by \lemmaref{Q2expr}. The existence of a unique continuous solution $(x_2(\epsilon),\alpha_2(\epsilon))$ for all $0<\epsilon\le \epsilon_0$ with the described properties therefore follows from the implicit function theorem. 
\end{proof}
Returning to $x$ and $\alpha$, the solution in \lemmaref{sn2} becomes $x=\epsilon^{2k/(2k+1)}x_2(\epsilon),\,\alpha= \epsilon^{2k/(2k+1)}\alpha_2(\epsilon)$ as desired. This is the unique saddle-node bifurcation of \eqref{QR}, since \eqref{Q2R2p}, \eqref{Q2R2c1} and \eqref{Q2R2c2} imply \eqref{QRcond0}, \eqref{QRcond1} and \eqref{QRcond2}, respectively. The proof of \thmref{main2} is therefore complete.
\begin{figure}
\begin{center}
\includegraphics[width=.7\textwidth]{./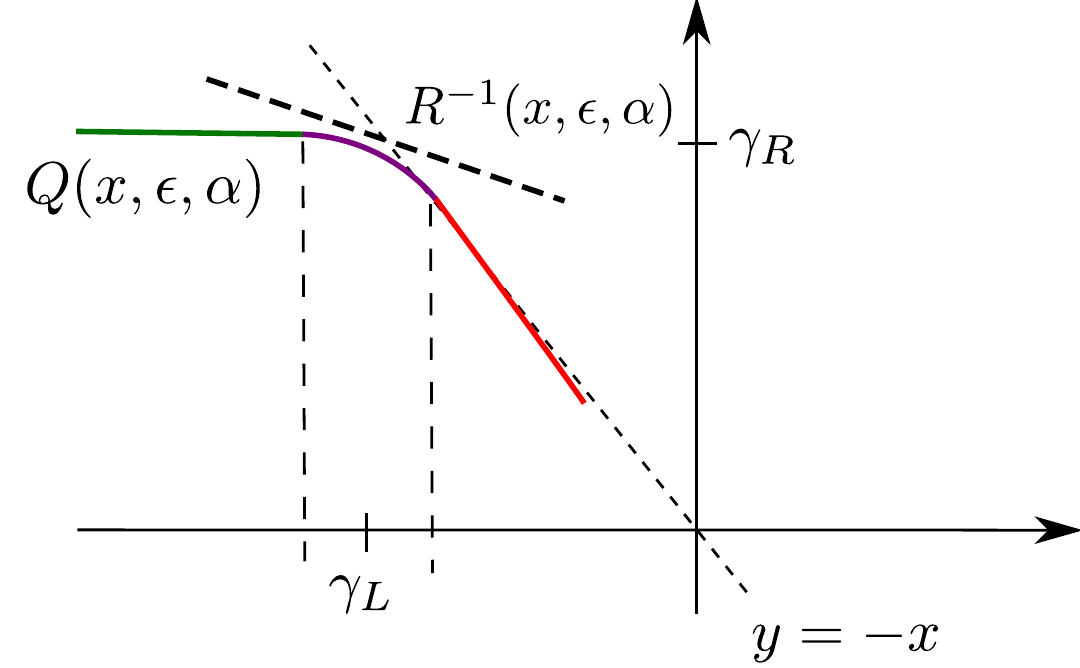} \end{center}
 \caption{Graphs of $Q$ (thick curve in different colours) and $R^{-1}$ (thick dashed line), following \thmref{main1}(d) and \lemmaref{Rmap}. The different colours on the graph of $Q$, represents the different domains in \thmref{main1}(d): Green for the domain in  (i), purple for the domain in (ii), which is $\mathcal O(\epsilon^{2k/(2k+1})$-close to $\gamma_L$, red for the domain in (iii). By \lemmaref{Rmap}, a unique quadratic tangency between these graphs occur within the purple domain, upon adjusting $\alpha$. This tangency is the saddle-node bifurcation of limit cycles. }
 \figlab{QRgraph}
              \end{figure}

              \section{Discussion}\seclab{disc}
             
             \subsection*{Comparison with previous results}
             \qss{\cite{bonet-rev2016a} also describes the regularization of the visible fold, but only for the class of regularizations in \eqref{psi}. They also assume an algebraic condition like (A2) but at $s=\pm 1$ rather than asymptotically. \cite[Theorem 2.2]{bonet-rev2016a} describes the invariant manifold's intersection with a fixed section, similar to \thmref{main1}(b). For their regularization functions, $m(\epsilon)=\gamma_R+\mathcal O(\epsilon)$ to leading order for any $k$. (In \cite[Thoerem 3.3]{kristiansen2017a} this result was rediscovered using blowup. This also allowed for a more detailed expression of the remainder; it is a smooth function of $\epsilon^{1/(2k+1)}$.) \qs{Notice that in our case, $m(\epsilon)=\gamma_R+\mathcal O(\epsilon^{2k/(2k+1})$, see \thmref{main1}(b), where $k=k_+$ is the decay rate in (A2). Going through the details, we may also realise that it is a smooth function of $\epsilon^{1/(2k+1)}$ but in general only $C^l$, $l=l(k)\ge 1$, due to the resonances (producing $\log$'s), recall \eqref{strongres}. }}
             
             \cite[Theorem 2.4]{bonet-rev2016a} also addresses the grazing bifurcation, described in the present manuscript in \thmref{main2}, but neither \cite{bonet-rev2016a}, \qss{nor the conference paper \cite{sutmig} by the same authors}, did rigorously establish existence nor uniqueness of a saddle-node bifurcation.       
             Our results improve on their findings in several ways. Firstly, we provide more details close to the tangency, see \thmref{main1}(d), for a more general, and more ``practical'', class of regularization functions. In turn, this allowed us to prove the existence and uniqueness of a saddle-node bifurcation in the regularized grazing bifurcation. Interestingly, in \cite{bonet-rev2016a}, the equation $y'(x) = x+y^k$, where $k$ is the order of the tangency at $s=1$, recall \eqref{psi}, plays an important role. For our regularization functions, satisfying (A1)-(A2), this equation is replaced by a Chini equation $v'(u)=2u+v^{-k}$, see \eqref{chini}, where $k=k_+$ is the decay rate in (A2). 
             
             \subsection*{The blowup approach}             \qss{Our approach to the problem was to use blowup \cite{dumortier_1996,krupa_extending_2001} following \cite{kristiansen2018a}. 
              This framework is general, see also the outlook below,  and has all the benefits that come with using blowup. The blowup method frequently provides remarkably detailed insight into complicated global dynamical phenomena, often even in a constructive way, see \cite{bossolini2017a,Gucwa2009783,kosiuk2011a, kosiuk2015a,2019arXiv190312232U,kristiansen2019a}. For the planar visible fold studied in the present paper, we applied two consecutive blowups, blowing up the discontinuity set to a cylinder and the visible fold to a sphere, and obtained improved hyperbolicity properties. This allowed us to obtain the desired results by combining:
              \begin{itemize}
              \item[(a)] The existence of an invariant manifold, see \lemmaref{M1} item (1); 
              \item[(b)] The (special) linearization used in \secref{r1} near the hyperbolic points $p_{L}$ and $p_R$, recall \lemmaref{rho1x1eps1Tilde};
              \end{itemize}
              -- both based on classical local results in dynamical systems theory (center manifold theory, linearization, etc.) -- with a global analysis of the Chini equation \eqref{chini}. In contrast to e.g. \cite{kristiansen2018a}, we use blowup to study -- for the first time -- a very general class of regularization functions, including functions such as the Goldbeter-Koshland function \eqref{gkfunc} that appear in applications.}

            \qs{We highlight that the Sotomayor-Teixera regularization functions used in \cite{bonet-rev2016a} are easier to study in general than the ones satisfying (A1)-(A2). In fact, a proper cylindrical blowup of $y=\epsilon=0$ is strictly speaking not necessary for these systems. Indeed, the PWS system coincides with the regularized one outside $\vert y\vert \le \epsilon$ for these functions and one does therefore not lose compactness in the ``scaling chart'' $(\bar \epsilon=1)_2$ \eqref{r2y2} where $y=\epsilon y_2$. In fact, one can just restrict to $y_2\in [-1,1]$. Consequently, for these systems, the directional charts $(\bar y=1)_1$ and $(\bar y=-1)_3$, recall \eqref{r1epsilon1} and \eqref{r3epsilon3}, are not necessary for the global picture. This is exploited in \cite{bonet-rev2016a,krihog,krihog2}. As a consequence, the blowup of $\overline T$ for the Sotomayor-Teixera functions have a ``scaling chart'', where the variables local to $(x,y_2,\epsilon)=(0,1,0)$ are just scaled by appropriate roots of $\epsilon$. In our case, the blowup of $\overline T$ is more complicated, as none of the directional charts, see \eqref{epsilon11} and \eqref{epsilon12}, correspond to pure scalings. I therefore believe that it is not clear how one would reproduce the results of our paper (rigorously) using asymptotic methods.}
             
             \subsection*{Regularization functions}

In this paper, we have considered a wide class of regularization functions. However, it may seem restrictive that \eqref{phiLimit} holds for all $\epsilon$. But if 
\begin{align}
 \phi(y_2,\epsilon) \rightarrow \left\{\begin{array}{cc} b(\epsilon) & \text{for $y_2\rightarrow \infty$},\\
                                 a(\epsilon) &  \text{for $y_2\rightarrow -\infty$},
                                \end{array}\right.\eqlab{phiLimit2}
\end{align}
for example, with $a(0)<b(0)$, then 
\begin{align*}
 \tilde \phi(y_2,\epsilon) = \frac{\phi(y_2,\epsilon)-a(\epsilon)}{b(\epsilon)-a(\epsilon)}.
\end{align*}
satisfies \eqref{phiLimit}. We can then write $Z(z,\phi,\alpha) = \tilde Z(z,\tilde \phi,\epsilon,\alpha)$ and subsequently $\tilde Z$ in the form \eqref{fA}. Notice, that following this procedure $\tilde Z_\pm$ will in general depend upon $\epsilon$. However, only to keep the notation as simple as possible, we focused on the simpler setting in \eqref{fA}.

The class of functions described by (A1)-(A2) include standard regularization functions, such as $\arctan$ and \eqref{phiexample} but also nontrivial ones like the Goldbeter-Koshland function \eqref{gkfunc}
where $k_\pm=1$. Functions like $\tanh$ and \eqref{phicor}, where $k_\pm=\infty$ in (A2), are more difficult, because the blowup method does not apply directly, but they can, at least when the remainder is exponential, be tackled using the technique in \cite{kristiansen2017a}, see e.g. \cite[Theorem 3.5]{kristiansen2017a}.
%

Qualitatively, different regularization functions within our class do not change the results. However, the decay rate $k=k_+$ in (A2) does change the details quantitatively, see \eqref{mEpsSM} and \eqref{alphaEpsSN}.




If the regularization function $\phi$ is not monotone, such that (A1) and \eqref{monone} are not satisfied, then the critical manifold $\overline S$ upon blowup, will have folds, where, working in the scaling chart $(\bar \epsilon=1)_2$, see \eqref{scalingEqns}, classical results from singular perturbation theory can be applied, e.g. \cite{krupa_extending_2001} or canard theory \cite{krupa_relaxation_2001}. ($\phi(s,\epsilon) = s/\sqrt{s^2+\epsilon s+1}$ is an example with a fold $s=-2\epsilon^{-1}\rightarrow - \infty$ for $\epsilon\rightarrow 0$. In such cases, additional blowups in the directional charts are probably required to resolve such phenomena.)  \qs{In particular, due to such folds, the critical manifold will be repelling close to the point $T$. 
See also \cite{bossolini2017b,kristiansen2019d} for applications; in these cases Filippov does not agree with the $\epsilon\rightarrow 0$ limit. They can, however, be studied by the same methods used in the present paper. The generalization of \thmref{main2} will then include a canard phenomenon where the unstable limit cycles can be extended beyond the tangency point until they reach the fold of the critical manifold. We leave the details of this case to future work.}
Similarly, if (A0) does not hold, then the scaling chart, obtained by setting $y=\epsilon y_2$, is simply a general slow-fast system; it can be as complicated as any planar slow-fast system and there is little value in making the PWS connection in a general framework. 

\subsection*{Outlook}

             \qs{Going forward, I have together with co-authors applied the approach in the present paper to two systems in applications, see  \cite{kristiansen2019d} and \cite{kukszm}. In particular, in \cite{kristiansen2019d} we study singularly perturbed oscillators with exponential nonlinearities of the form $e^{y\epsilon^{-1}}$. These systems have nonsmooth limits upon normalization and can be written in the form \eqref{ztf} with $\phi$ as in \eqref{phicor}. Due to the exponentials in \eqref{phicor}, however, we apply a slight modification of the approach used in the present manuscript (by following  \cite{kristiansen2017a}) to handle the resulting $k_\pm =\infty$ in (A2). In \cite{kukszm}, on the other hand, we study substrate-depletion oscillators which have a switch-like function modelling an autocatalytic process. In \cite{Tyson2003}, this switch is described by the Goldbeter-Koshland function \eqref{gkfunc}. We combine the method used in the present paper with a parameter space blowup to describe the existence and nonexistence of limit cycles in the substrate-depletion oscillators close to their PWS limits. We find that the main mechanism for the oscillations is due to a ``boundary node bifurcation'' \cite{Kuznetsov2003, hogan2016a} where a node of the PWS system intersects the discontinuity set. Our spherical blowup of a fold point, like $\overline T$ in the present paper, captures this transition.} 
             
             \qs{In \cite{jelbart}, we consider a related situation of a ``boundary focus bifurcation'' \cite{Kuznetsov2003} in the friction oscillator problem for $\alpha=0$, recall \secref{model}. For $0<\epsilon\ll 1$ this PWS bifurcation gives rise to an additional Hopf bifurcation (now supercritical) on a blowup sphere (like the blowup of $\overline T$ in the present paper). We again combine the blowup approach promoted in this paper with a parameter space blowup with the overall aim to connect (in a smooth family) the attracting Hopf cycles all the way up to the $\mathcal O(1)$ cycles that bifurcate near the PWS grazing bifurcation, as described in \corref{model}. 
                                      }

\section*{Acknowledgement}
I would like to thank Samuel Jelbart for bringing the friction oscillator and the model \eqref{muModel} to my attention, for sharing references on the subject and for providing valuable feedback on an earlier version of the manuscript.  
%
\appendix 
\section{Proof of \thmref{main1}(a)}\applab{appA}
The analysis in the chart $\bar y=-1$, see \eqref{r1epsilon1}, plays little role and is similar to how we deal with $\bar y=1$. The details are therefore omitted. In the following we therefore consider $\bar \epsilon=1$ and $\bar y=1$ only. 
\subsection{Chart $\bar \epsilon=1$}
In this chart, we obtain the following equations
\begin{align}
 \dot x &=\epsilon \phi(y_2,\epsilon)(1+f(x,\epsilon y_2)),\eqlab{scalingEqns}\\
 \dot y_2 &= \phi(y_2,\epsilon)(2x+\epsilon y_2 g(x,\epsilon y_2)) + 1-\phi(y_2,\epsilon),\nonumber
\end{align}
and $\dot r_2=0$, 
using \eqref{ZpmNF} and \eqref{ztf} in \eqref{zext}. This is a slow-fast system with $x$ slow and $y_2$ fast. Setting $\epsilon=0$ gives the layer problem where $x$ is a parameter and 
\begin{align*}
 \dot y_2 &=\phi(y_2,0)2x+1-\phi(y_2,0),
\end{align*}
and hence a normally hyperbolic and attracting, but noncompact, critical manifold:
\begin{align}
 S =\{(x,y_2)\in U_\xi \vert \phi(y_2,0) =\frac{1}{1-2x},x<0\}.\eqlab{Schart2}
\end{align}
Fixing $J=[-\xi,-\nu]$, we can apply Fenichel's theory to $S\cap \{x\in J\}$ and conclude the existence of the invariant manifold $S_\epsilon$ in \thmref{main1}(a). The fact that $Z\vert_{S_\epsilon}$ is a regular perturbation of the Filippov vector-field is standard and follows easily from the fact that the reduced problem on $S$: 
\begin{align*}
 x' &=\phi(y_2,0)(1+f(x,0)) = \frac{1}{1-2x}(1+f(x,0)),
\end{align*}
coincides with Filippov. 
\subsection{Chart $\bar y=1$}
Upon inserting \eqref{r1epsilon1} into \eqref{zext}, using \eqref{ztf} and \eqref{ZpmNF}, we obtain the following equations
\begin{align*}
\dot r_1 &=-r_1 F(r_1,x,\epsilon_1),\\
\dot x &= r_1 (1-\epsilon_1^k \phi_+(r_1,\epsilon_1))(1+f(x,r_1),\\
 \dot \epsilon_1 &=\epsilon_1 F(r_1,x,\epsilon_1),
\end{align*}
after dividing the right hand side by $\epsilon_1$ (as promised in our description of the blowup approach, see \secref{mainResults}). In these equations we have introduced the following function
where
\begin{align*}
 F(r_1,x,\epsilon_1) =- (1-\epsilon_1^k\phi_+(r_1,\epsilon_1))(2x+r_1 g(x,r_1))-\epsilon_1^k \phi_+(r_1,\epsilon_1).
\end{align*}
\begin{remark}\remlab{yZp}
\qss{
Notice that within the invariant subset defined by $\epsilon_1=0$, we have $F_1(r_1,x,0)=-(2x+r_1g(x,r_1))$ and hence
\begin{align*}
\dot x &=y(1+f(x,y)),\\
 \dot y &= -y (2x+yg(x,y)),
\end{align*}
using that $r_1=y$. But this is just $yZ_+$, see \eqref{ZpmNF}. }
\end{remark}

Now, focus first on $x\in J$. Then for $r_1\ge 0$ and $\epsilon_1\ge 0$ but sufficiently small we have $F>0$ and hence the system is topological equivalent with the following version
\begin{align}
 \dot r_1 &=-r_1 ,\eqlab{r1xepsilon1}\\
\dot x &= r_1 (1-\epsilon_1^k \phi_+(r_1,\epsilon_1))\frac{(1+f(x,r_1)}{F(r_1,x,\epsilon_1)},\nonumber\\
 \dot \epsilon_1 &=\epsilon_1,\nonumber
\end{align}
The set $r_1=\epsilon_1=0$ is therefore a line of equilibria having stable and unstable manifolds contained within $\epsilon_1=0$ and $r_1=0$, respectively. We can straighten out the individual unstable manifolds of points on $x\in J,\,r_1=\epsilon_1=0$ by a transformation of the form $\tilde x\mapsto x=m(\tilde x,r_1)$, $r_1\in [0,\xi]$ with $m$ smooth, with $m'_{\tilde x}>0$. Applying this transformation to \eqref{r1xepsilon1} gives 
\begin{align*}
 \dot r_1 &=-r_1,\\
 \dot x &= \epsilon G(r_1,x,\epsilon_1),\\
 \dot \epsilon_1&=\epsilon_1,
\end{align*}
dropping the tilde on $x$. Recall here that $\epsilon=r_1\epsilon_1$. Therefore if we consider an initial condition with $r_1(0)=\delta>0$ small, then 
\begin{align}
 r_1(t) &=e^{-t}\delta,\eqlab{sol}\\
 x(t)&=x(0)+\mathcal O(\epsilon t),\nonumber\\
 \epsilon_1(t) &= e^{t}\epsilon_1(0).\nonumber
\end{align}

Now, we wish to extend the stable foliation of $S_\epsilon$ by the backward flow. For this let $x\in J$, after possibly decreasing $\nu>0$ and $\xi$, and consider the leaf $\mathcal F_{x,\epsilon}$ of the Fenichel foliation of $S_\epsilon$. We therefore flow this set forward $t=\mathcal O(\log \epsilon^{-1})$, which is the time it takes for $r_1$ to go from $\mathcal O(1)$ to $\mathcal O(\epsilon)$. This gives a new $x$, $x'=x\cdot t$, say, and a new leaf $\mathcal F_{x',\epsilon}$. Notice $\phi_t \left(\mathcal F_{x,\epsilon}\right)\subset \mathcal F_{x',\epsilon}$ and hence we extend $\mathcal F_{x,\epsilon}$ by flowing $\mathcal F_{x',\epsilon}$ backwards by time $t$. (In general, $\mathcal F_{x',\epsilon}$ will not be fully covered by the chart $\bar y=1$ and therefore we will have to work in separate charts.) We do this by using \eqref{sol}, which produces the extended leafs as images of Lipschitz mappings.  


\bibliography{refs}
\bibliographystyle{plain}
 \end{document}